\renewcommand{\geq}{\geqslant}
\def\N{\textrm{I\kern-0.21emN}}
\def\R{\textrm{I\kern-0.21emR}}
\def\Z{\mathbb{Z}}
\def\P{\mathbb{P}}
\newcommand{\ds}{\displaystyle}
\newcommand{\dive}{\mathop{\rm div}\nolimits}
\newcommand{\ts}{T_{*}}
\newcommand{\vsd}{\vskip 0.2cm}
\def\virgp{\raise 2pt\hbox{,}}
\def\cdotpv{\raise 2pt\hbox{;}}
\def\eqdefa{\buildrel\hbox{{\rm \footnotesize def}}\over =}
\newtheorem{theorem}{Theorem}[section]
\newtheorem{prop}[theorem]{Proposition}
\newtheorem{lemma}[theorem]{Lemma}
\theoremstyle{remark}
\newtheorem{remark}{Remark}[section]
\theoremstyle{definition}
\newtheorem{definition}{Definition}[section]
\theoremstyle{definition}
\theoremstyle{definition}
\title{About the possibility of minimal blow up for Navier-Stokes solutions with data in $\dot{H}^s(\R^3)$}
\author{Eugénie Poulon}
\date\today
\begin{document}

\synctex=1
\newcommand{\nfont}{\fontshape{n}\selectfont}

\address{({\nfont{Eug\'enie Poulon}}) Laboratoire Jacques-Louis Lions - UMR 7598, Universit\'e Pierre et Marie Curie, Bo\^{i}te courrier 187, 4 place Jussieu, 75252 Paris Cedex 05, France}

\email{poulon@ann.jussieu.fr} 

\keywords{incompressible Navier-Stokes equations; blow up; profile decomposition, critical solution}

\begin{abstract}
\noindent Considering initial data in $\dot{H}^s$, with $\frac{1}{2} < s < \frac{3}{2}$, this paper is devoted to the study of possible blowing-up Navier-Stokes solutions such that $\ds{(\ts(u_{0}) -t)^{\frac{1}{2} (s- \frac{1}{2})} \,\, \| u \|_{\dot{H}^s} }$ is bounded. Our result is in the spirit of the tremendous works of L. Escauriaza, G. Seregin, and V. $\breve{\mathrm{S}}$ver$\acute{\mathrm{a}}$k and I. Gallagher, G. Koch, F. Planchon, where they proved there is no blowing-up solution which remain bounded in $L^3(\R^3)$. The main idea is that if such blowing-up solutions exist, they satisfy critical properties. 
\end{abstract}
\maketitle

\section{Introduction and statement of main result}
\medbreak
\noindent    We consider the Navier-Stokes system for incompressible viscous fluids evolving in the whole space $\R^{3}$. Denoting by $u$ the velocity, a vector field in $\R^3$, by $p$ in $\R$ the pressure function, the Cauchy problem for the homogeneous incompressible Navier-Stokes system is given by 
\begin{equation} \left \lbrace \begin {array}{ccc}  \partial_{t}u + u\cdot\nabla{u}-\Delta{u}&=&-\nabla{p}\\ \dive u&=&0\\
 u_{|t=0}&=&u_{0}.\\ \end{array}
\right.
\end{equation}
\vsd

\noindent We recall a crucial property of the Navier-Stokes equation : the scaling invariance. Let us define the operator
\begin{equation}
\label{notation}
\begin{split}
\forall \alpha \in \R^{+},\,\, \forall \lambda \in \R^{+}_{*},\,\,  &\forall x_{0} \in \R^3,\,\,\,\, \Lambda^{\alpha}_{\lambda,x_{0}}\,u(t,x) \eqdefa \frac{1}{\lambda^{\alpha}}u\Bigl(\frac{t}{\lambda^2} \virgp \frac{x-x_{0}}{\lambda}  \Bigr).\\
&\hbox{If} \,\,\, \alpha =1,\,\,\, \hbox{we note} \,\,\,  \Lambda^{1}_{\lambda,x_{0}} =  \Lambda_{\lambda,x_{0}}. 
\end{split}
\end{equation}

\noindent Clearly, if $u$ is smooth solution of Navier-Stokes system on ~$[0,T] \times \R^3$ with pressure $p$ associated with the initial data $u_{0}$, then, for any positive $\lambda$, the vector field and the pressure 
$$ u_{\lambda}  \eqdefa \Lambda_{\lambda,x_{0}}\,u \quad \hbox{and} \quad p_{\lambda}  \eqdefa \Lambda^{2}_{\lambda,x_{0}}\,p $$
is a solution of Navier-Stokes system on the interval $[0,\lambda^2T] \times \R^3$, associated with the initial data $$u_{0,\lambda}  = ~\Lambda_{\lambda,x_{0}}\, u_{0}.$$ 
This leads to the definition of scaling invariant space.
\begin{definition}\sl{
A Banach space $X$ is said to be scaling invariant (or also critical),  if its norm is invariant under the scaling transformation defined by $u \mapsto u_{\lambda}$
$$ || u_{\lambda} ||_{X} = || u ||_{X}.$$}
\end{definition}
\noindent Let us give some exemples of critical spaces in dimension $3$
$$ \dot{H}^{\frac{1}{2}}(\R^3) \hookrightarrow L^3(\R^3) \hookrightarrow \dot{B}^{-1 +\frac{3}{p}}_{p,\infty}(\R^3)_{p < \infty} \hookrightarrow \mathcal{BMO}^{-1}(\R^3) \hookrightarrow \dot{B}^{-1 }_{\infty,\infty}(\R^3).$$

\noindent The framework of this work is functional spaces which are above the natural scaling of Navier-Stokes equations. More precisely, our statements will take place in some Sobolev and Besov spaces, with a regularity index $s$ such that $\ds{\frac{1}{2} < s < \frac{3}{2}\cdotp}$\\

\noindent \textbf{Notations.} We shall constantly be using the following simplified notations:  $$L^{\infty}_{T}(\dot{H}^s)  \eqdefa L^{\infty}([0,T],\dot{H}^s)\quad \hbox{and} \quad L^{2}_{T}(\dot{H}^{s+1})  \eqdefa L^{2}([0,T],\dot{H}^{s+1}),$$
\noindent and the relevant function space we shall be working with in the sequel is $$X^{s}_{T} \eqdefa L^{\infty}_{T}(\dot{H}^s)\, \cap\, L^{2}_{T}(\dot{H}^{s+1}),\quad \hbox{endowed with the norm} \quad\| u \|^2_{X^{s}_{T}}  \eqdefa \| u \|^2_{L^{\infty}_{T}(\dot{H}^s)} + \| u \|^2_{L^{2}_{T}(\dot{H}^{s+1})}.$$
\vskip 0.2cm
\noindent Let us start by recalling the local existence theorem for data in the Sobolev space $\dot{H}^s$. 
\begin{theorem} \sl{
\label{key theorem}
Let $u_{0}$ be in $\dot{H}^s$, with $\ds{\frac{1}{2} < s < \frac{3}{2}}\cdotp$ Then there exists a time $T$ and there exists a unique solution $NS(u_{0})$ such that  
$\ds{NS(u_{0}) \quad\hbox{belongs to} \quad L^{\infty}_{T}(\dot{H}^s) \cap L^{2}_{T}(\dot{H}^{s+1})}$.\\
Moreover,  denoting by $\ts(u_{0})$ the maximal time of existence of such a solution, there exists a positive constant ~$c$ such that  
\begin{equation}
\label{relation Ac et ts}
 \ts(u_{0})\,\, \| u_{0} \|^{\sigma_{s}}_{\dot{H}^s} \geqslant c,\quad \hbox{with} \quad \sigma_{s} \eqdefa \frac{1}{\frac{1}{2}(s-\frac{1}{2})}\cdotp
\end{equation}}
\end{theorem}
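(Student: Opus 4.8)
The plan is to construct $NS(u_{0})$ by a Fujita--Kato fixed point applied to the Duhamel (mild) formulation
$$u = e^{t\Delta}u_{0} + B(u,u),\qquad B(u,v)(t)\eqdefa-\int_{0}^{t} e^{(t-\tau)\Delta}\,\mathbb{P}\,\dive(u\otimes v)(\tau)\,d\tau,$$
where $\mathbb{P}$ is the Leray projector onto divergence-free fields and $e^{t\Delta}$ the heat semigroup. Everything reduces to two bounds in the norm of $X^{s}_{T}$: a linear bound $\|e^{t\Delta}u_{0}\|_{X^{s}_{T}}\lesssim\|u_{0}\|_{\dot{H}^s}$ and a bilinear bound $\|B(u,v)\|_{X^{s}_{T}}\lesssim T^{\frac12(s-\frac12)}\,\|u\|_{X^{s}_{T}}\|v\|_{X^{s}_{T}}$. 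Once these hold, the usual contraction lemma produces a unique solution as soon as $\|e^{t\Delta}u_{0}\|_{X^{s}_{T}}$ is smaller than a fixed multiple of the inverse of the bilinear constant, i.e. as soon as $T^{\frac12(s-\frac12)}\|u_{0}\|_{\dot{H}^s}$ is small enough. Since $\frac12(s-\frac12)=1/\sigma_{s}$, raising this threshold to the power $\sigma_{s}$ turns it precisely into $T\|u_{0}\|^{\sigma_{s}}_{\dot{H}^s}\le c$, so that applying the construction up to the maximal time delivers \eqref{relation Ac et ts}.

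The linear bound is read off the Fourier side from $\widehat{e^{t\Delta}u_{0}}(\xi)=e^{-t|\xi|^{2}}\widehat{u_{0}}(\xi)$: one has $\|e^{t\Delta}u_{0}\|_{L^\infty_{T}(\dot{H}^s)}\le\|u_{0}\|_{\dot{H}^s}$, and using $\int_{0}^{\infty}e^{-2t|\xi|^{2}}\,dt=(2|\xi|^{2})^{-1}$ one gets the identity $\|e^{t\Delta}u_{0}\|^{2}_{L^2_{T}(\dot{H}^{s+1})}\le\frac12\|u_{0}\|^{2}_{\dot{H}^s}$. For the bilinear term I would first absorb the heat flow by parabolic smoothing, in the form $\|B(u,v)\|_{X^{s}_{T}}\lesssim\|\dive(u\otimes v)\|_{L^2_{T}(\dot{H}^{s-1})}\lesssim\|u\otimes v\|_{L^2_{T}(\dot{H}^{s})}$; the first inequality follows again from the above kernel bounds (Young and Cauchy--Schwarz in time on the Fourier side), and the second from the fact that $\mathbb{P}$ and one derivative cost exactly one power of regularity.

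The core is therefore the estimate $\|u\otimes v\|_{L^2_{T}(\dot{H}^{s})}\lesssim\|u\|_{X^{s}_{T}}\|v\|_{X^{s}_{T}}$, for which I would invoke the Sobolev product law $\dot{H}^{s_{1}}\cdot\dot{H}^{s_{2}}\hookrightarrow\dot{H}^{s_{1}+s_{2}-\frac32}$, valid for $s_{1},s_{2}<\frac32$ and $s_{1}+s_{2}>0$, with $s_{i}=s+\theta_{i}$ and $\theta_{1}+\theta_{2}=\frac32-s$. The requirement $s_{i}<\frac32$ is exactly the upper bound $s<\frac32$. Interpolating the two defining norms of $X^{s}_{T}$ gives $\|w\|_{\dot{H}^{s+\theta}}\in L^{2/\theta}_{T}$ for $\theta\in[0,1]$, so H\"older in time puts the pointwise product in $L^{2/\beta}_{T}(\dot{H}^{s})$ with $\beta=\frac32-s$; since $\beta<1$ precisely when $s>\frac12$, a final H\"older inequality on the finite interval $[0,T]$ loses a factor $T^{\frac12-\frac\beta2}=T^{\frac12(s-\frac12)}=T^{1/\sigma_{s}}$ and lands in $L^2_{T}(\dot{H}^{s})$. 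This is where both ends of the range $\frac12<s<\frac32$ enter in an essential way, and I expect the main obstacle to be precisely this bilinear estimate: organizing the paraproduct bookkeeping behind the product law and, above all, extracting the exact power of $T$ that matches the exponent $\sigma_{s}$.

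Uniqueness on a short interval follows from the same bilinear bound applied to the difference of two solutions (a contraction argument) and is then propagated to the whole interval of existence by connectedness; the lower bound \eqref{relation Ac et ts} on $\ts(u_{0})$ is, as explained, merely a rereading of the solvability threshold. As an independent check, the energy identity
$$\tfrac12\tfrac{d}{dt}\|u\|^{2}_{\dot{H}^s}+\|u\|^{2}_{\dot{H}^{s+1}}=-\bigl(\,|\nabla|^{s-1}\dive(u\otimes u)\,,\,|\nabla|^{s+1}u\,\bigr),$$
in which the pressure term drops out by incompressibility, combined with the same product law, interpolation and Young's inequality, yields a differential inequality $\frac{d}{dt}\|u\|^{2}_{\dot{H}^s}\lesssim\|u\|^{(2s+1)/(s-\frac12)}_{\dot{H}^s}$ whose comparison ODE cannot blow up before a time comparable to $\|u_{0}\|^{-\sigma_{s}}_{\dot{H}^s}$, recovering \eqref{relation Ac et ts}.
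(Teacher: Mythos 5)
Your proof is correct: the paper states Theorem \ref{key theorem} without proof, as a recalled classical result (the Fujita--Kato scheme in $\dot{H}^s$, cf. \cite{BCD}), and your argument is exactly that standard proof. The mild formulation, the reduction of $B$ via heat-flow smoothing to the product estimate $\|u\otimes v\|_{L^2_T(\dot{H}^s)}\lesssim T^{1/\sigma_s}\|u\|_{X^s_T}\|v\|_{X^s_T}$ (Sobolev product law plus interpolation of the two norms of $X^s_T$), and the exponent bookkeeping $\frac12-\frac{\beta}{2}=\frac12(s-\frac12)=1/\sigma_s$ turning the smallness threshold into \eqref{relation Ac et ts} are all right, as is the energy-inequality cross-check.
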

\medbreak
\begin{remark}
Throughout this paper, we will adopt the useful notation $NS(u_{0})$ to mean the maximal solution of the Navier-Stokes system, associated with the initial data $u_{0}$. Notice that our whole work relies on the hypothesis there exists some blowing up $NS$-solutions, e.g some $NS$-solutions with a finite lifespan~$\ts(u_{0})$. This is still an open question.
\end{remark}

\begin{remark}
We point out that the infimum of the quantity $\ds{\ts(u_{0})\,\, \| u_{0} \|^{\sigma_{s}}_{\dot{H}^s}}$ exists and is positive (because of the constant $c$).  It has been proved in \cite{P} that there exists some intial data which reach this infimum and that the set of such data is compact, up to dilations and translations. 
\end{remark}

\begin{remark}
\noindent Theorem \ref{key theorem} implies there exists a constant $c>0$, such that
\begin{equation}
\label{remark c}
(\ts(u_{0}) - t)\, \| NS(u_{0})(t) \|^{\sigma_{s}}_{\dot{H}^s} \geqslant c,
\end{equation}
\noindent and thus we get in particular the blow up of the $\dot{H}^s$-norm 
$$ \lim_{t \to \ts(u_{0})} \| NS(u_{0})(t)\|^{\sigma_{s}}_{\dot{H}^s} = +\infty.$$
\end{remark}
\vskip 0.2cm
\noindent Our motivation here is to wonder if there exist some Navier-Stokes solutions which stop living in finite time (e.g~$\ts(u_{0}) < \infty$) and which blows up at a minimal rate, namely: there exists a positive constant~$M$ such that~$\ds{(\ts(u_{0}) - t)\, \| NS(u_{0}) \|^{\sigma_{s}}_{\dot{H}^s} \leqslant M}$. In others terms,  
\\
\begin{center}
\noindent \textit{Question:} \sl{ Does there exist some blowing up $NS$-solutions  such that~$\ds{(\ts(u_{0}) - t)\, \| NS(u_{0}) \|^{\sigma_{s}}_{\dot{H}^s} \leqslant M}$ ?\\
If yes, what do they look like ? 
}
\end{center}
\vskip 0.3cm
\noindent We assume an affirmative answer and we search to characterize such solutions.\\
\begin{center}
\noindent \textit{Hypothesis $\mathcal{H}$:} \sl{ There exist some blowing up $NS$-solutions such that $\ds{(\ts(u_{0}) - t)\, \| NS(u_{0}) \|^{\sigma_{s}}_{\dot{H}^s} \leqslant M}$.
}
\end{center}
\vskip 0.1cm
\noindent Notice that a very close question to this one is to prove that
$$ \hbox{If} \quad \ts(u_{0}) < \infty, \quad \hbox{does} \quad \limsup_{t \to \ts(u_{0}) } (\ts(u_{0}) - t)\, \| NS(u_{0})(t) \|^{\sigma_{s}}_{\dot{H}^s} = +\infty \quad ?$$

\noindent We underline that this question about blowing-up Navier-Stokes solutions has been highly developed in the context of critical spaces, namely $\dot{H}^{\frac{1}{2}}(\R^3)$ and~$L^{3}(\R^3)$. Indeed, L. Escauriaza, G. Seregin and V. $\breve{\mathrm{S}}$ver$\acute{\mathrm{a}}$k showed in the fundamental work \cite{ESS} that any "Leray-Hopf" weak solution which remains bounded in~$L^{3}(\R^3)$ can not develop a singularity in finite time. Alternatively, it means that 
\begin{equation}
\hbox{If} \,\, \ts(u_{0}) <  +\infty,\,\,\,\, \hbox{then} \,\, \limsup_{t \to \ts(u_{0})} \, \| NS(u_{0})(t) \|_{L^3} = +\infty.
\end{equation}
\noindent I. Gallagher, G. Koch and F. Planchon revisited the above criteria in the context of mild Navier-Stokes solutions. They proved in \cite{GKP} that strong solutions which remain bounded in $L^{3}(\R^3)$, do not become singular in finite time. To perform it, they develop an alternative viewpoint : the method of "critical elements" (or "concentration-compactness"), which was introduced by C. Kenig and F. Merle to treat critical dispersive equations. Recently, same authors extend the method in~\cite{GKP2} to prove the same result in the case of the critical Besov space~$\ds{\dot{B}^{-1 +\frac{3}{p}}_{p,q}(\R^3)}$, with~$\ds{3<p,q<\infty}$. Notice the work of J.-Y.Chemin and F. Planchon in~\cite{CP}, who gives the same answer in the case of the Besov space~$\ds{\dot{B}^{-1 +\frac{3}{p}}_{p,q}(\R^3)}$, with $\ds{3<p<\infty}$,\, $q<3$ and with an additional regularity assumption on the data. To conclude the non-exhaustive list of blow up results, we mention the work of C. Kenig and G. Koch who carried out in~\cite{KK} such a program of critical elements for solutions in the simpler case $\dot{H}^{\frac{1}{2}}(\R^3)$. More precisely, they proved  for any data~$u_{0}$ belonging to the smaller critical space  ~$\dot{H}^{\frac{1}{2}}(\R^3)$, 
\begin{equation}
\hbox{If} \,\, \ts(u_{0}) <  +\infty,\,\,\,\, \hbox{then} \,\, \lim_{t \to \ts(u_{0})} \, \| NS(u_{0})(t) \|_{\dot{H}^{\frac{1}{2}}} = +\infty.
\end{equation}

\noindent In our case (remind : we consider Sobolev spaces $\dot{H}^{s}(\R^3)$ with $\ds{ \frac{1}{2} < s < \frac{3}{2}}$ which are non-invariant under the natural scaling of Navier-Stokes equations), we can not expect to prove our result in the same way, because of the scaling. Indeed, a similar proof leads us to define the critical quantity $M^{\sigma_{s}}_{c}$ 
$$ M^{\sigma_{s}}_{c} = \sup \bigl\{ A>0,\,\, \sup_{t < \ts(u_{0}) } (\ts(u_{0}) - t)\, \| NS(u_{0}) \|^{\sigma_{s}}_{\dot{H}^s} \leqslant A\,\,\,\, \Rightarrow\,  \ts(u_{0}) = +\infty   \bigr\}.$$
But unfortunately, such a point of view makes no sense, owing to the meaning of~$(\ts(u_{0}) - t)$ when~$\ts(u_{0}) ~= +\infty$. We have to proceed in an other way and it may be removed by defining a new object~$M^{\sigma_{s}}_{c}$ 
$$ M^{\sigma_{s}}_{c}  \eqdefa  \inf_{\substack{u_{0} \in \dot{H}^s \\ \ts(u_{0}) < \infty    }}   \bigl\{  \limsup_{t \to \ts(u_{0})}(\ts(u_{0}) - t)\, \| NS(u_{0})(t) \|^{\sigma_{s}}_{\dot{H}^s} \bigr\}.$$
\noindent Clearly, (\ref{remark c}) implies that $M^{\sigma_{s}}_{c} $ exists and is positive. As we have decided to work under hypothesis $\mathcal{H}$, \textit{a fortiori}, this implies that $M^{\sigma_{s}}_{c} $ is finite. The definition below is the key notion of critical solution in this context. 
\begin{definition}\sl{(Sup-critical solution)\\
Let $u_{0}$ be an element in $\dot{H}^s$. We say that $u = NS(u_{0})$ is a sup-critical solution if $NS(u_{0})$ satisfies the two following assumptions:
$$\ts(u_{0}) < \infty \quad \hbox{and} \quad  \limsup_{t \to \ts(u_{0})} (\ts(u_{0}) - t)\,  \| NS(u_{0})(t) \|^{\sigma_{s}}_{\dot{H}^s} =\, M^{\sigma_{s}}_{c}.$$ 
}\end{definition}
\noindent A natural question is to know if such elements exist. The statement given below gives an affirmative answer and provides a general procedure to build some sup-critical solutions. Our main result follows. 

\begin{theorem}\sl{(Key Theorem)\\
\label{Big key theorem}
Let us assume that there exists $u_{0}$ in $\dot{H}^s $ and  $M$ in $\R^{+}_{*}$ such that
$$\ts(u_{0}) < \infty \quad \hbox{and} \quad   (\ts(u_{0}) - t)\,  \| NS(u_{0})(t) \|^{\sigma_{s}}_{\dot{H}^s} \leqslant M.$$ 
\noindent Then, there exists $\Phi_{0} \in \dot{H}^s \cap \dot{B}^{\frac{1}{2}}_{2,\infty}$ such that $\Phi \eqdefa NS(\Phi_{0})$ is a sup-critical solution, blowing up at time $1$,  such that \begin{equation}
\label{key theorem point 1}
\sup_{ \tau < 1} \,(1 - \tau)\,  \| NS(\Phi_{0})(\tau) \|^{\sigma_{s}}_{\dot{H}^s} =\,  \limsup_{\tau\to 1} (1 - \tau)\,  \| NS(\Phi_{0})(\tau) \|^{\sigma_{s}}_{\dot{H}^s} =\, M^{\sigma_{s}}_{c}.
\end{equation}
%
\noindent In addition, there exists a positive constant $C$ such that
\begin{equation}
\label{key theorem point 2}
\hbox{and for any} \quad \tau <1, \quad \| NS(\Phi_{0})(\tau) \|_{\dot{B}^{\frac{1}{2}}_{2,\infty}} \leqslant C, 
\end{equation}
\noindent where the Besov norm (for regularity index $0<\alpha <1$) is defined by
$$ \| u \|_{\dot{B}^{\alpha}_{2,\infty}} \eqdefa \sup_{x \in \R^d} \,\, \frac{\| u(\cdotp -x) - u\|_{L^{2}}}{|x| ^{\alpha}}\cdotp$$
}\end{theorem}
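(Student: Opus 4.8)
The plan is to realize $M^{\sigma_{s}}_{c}$ as a value attained by a single ``critical element'' extracted from a minimizing sequence by profile decomposition, after using the scale invariance of the relevant quantity to normalize the blow-up time to $1$. The cornerstone is that although $\dot{H}^s$ is not scale invariant — one computes $\| \Lambda_{\lambda,x_{0}} v \|_{\dot{H}^s} = \lambda^{\frac12 - s} \| v \|_{\dot{H}^s}$ — the exponent $\sigma_{s}$ is tuned precisely so that $\lambda^{(\frac12 - s)\sigma_{s}} = \lambda^{-2}$, whence the quantity $g_u(t) \eqdefa (\ts(u_{0}) - t)\, \| NS(u_{0})(t) \|^{\sigma_{s}}_{\dot{H}^s}$ is invariant under $\Lambda_{\lambda,x_{0}}$ and under time translation. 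Consequently both $\mathcal{S}(u) \eqdefa \sup_{t < \ts(u_{0})} g_u(t)$ and $\mathcal{L}(u) \eqdefa \limsup_{t \to \ts(u_{0})} g_u(t)$ are scale- and translation-invariant, with $\mathcal{S}(u) \geq \mathcal{L}(u) \geq M^{\sigma_{s}}_{c}$. Under $\mathcal{H}$ one has $\mathcal{S}(u_0) \leq M < \infty$ for at least one datum, so that $M^{\sigma_{s}}_{c}$ is finite and a minimizing sequence $u_n$ with $\mathcal{L}(u_n) \to M^{\sigma_{s}}_{c}$ exists.

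The refinement that the supremum equals the limsup, namely (\ref{key theorem point 1}), I would obtain by a restart-and-rescale device. For any $u$ and $t_0 < \ts(u_0)$, restarting the flow at $t_0$ and applying $\Lambda_{\lambda,x}$ with $\lambda = (\ts(u_0) - t_0)^{-1/2}$ produces a solution that blows up at time $1$ and whose supremum of $g$ equals $\sup_{t \in [t_0,\ts(u_0))} g_u(t)$, a quantity which decreases to $\mathcal{L}(u)$ as $t_0 \to \ts(u_0)$. Restarting each $u_n$ at a time $t_n$ chosen close enough to $\ts(u_{0,n})$ and rescaling in this way, I obtain solutions $w_n = NS(w_{0,n})$ all blowing up at time $1$ with $\mathcal{S}(w_n) \to M^{\sigma_{s}}_{c}$ and $\mathcal{L}(w_n) \to M^{\sigma_{s}}_{c}$. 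In particular $(1-\tau)\| w_n(\tau)\|^{\sigma_{s}}_{\dot{H}^s} \leq M^{\sigma_{s}}_{c} + o(1)$ uniformly in $\tau$, so $\{ w_{0,n} \}$ is bounded in $\dot{H}^s$.

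\emph{The extraction of the critical element is the main obstacle.} I would apply the $\dot{H}^s$ profile decomposition (in the spirit of Bahouri--Gérard, adapted as in \cite{GKP}) to $w_{0,n}$, writing $w_{0,n} = \sum_{j} \Lambda_{h_n^j, x_n^j} \phi^j + \text{(remainder)}$ with pairwise orthogonal scales and cores. Using the scale invariance of $g$ together with the superposition principle for Navier--Stokes — cross-interactions between profiles carried by orthogonal parameters vanish as $n \to \infty$ — the blow-up of $w_n$ at time $1$ must be produced by the profiles, and by definition of the infimum each blowing-up profile contributes at least $M^{\sigma_{s}}_{c}$ to the limiting rate, while almost-orthogonality of the $\dot{H}^s$ norms forbids the total from exceeding $M^{\sigma_{s}}_{c} + o(1)$. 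This rigidity forces a single nontrivial profile $\Phi_{0}$ with vanishing remainder, so that $NS(\Phi_{0})$ blows up exactly at time $1$ with $\mathcal{L}(\Phi_0) = M^{\sigma_{s}}_{c}$; lower semicontinuity of the norms along the profile convergence turns $\mathcal{S}(w_n) \to M^{\sigma_{s}}_{c}$ into $\sup_{\tau < 1} g_{\Phi_0}(\tau) \leq M^{\sigma_{s}}_{c}$, which yields (\ref{key theorem point 1}). Verifying the independent nonlinear evolution of the profiles in the \emph{non}-invariant $\dot{H}^s$ framework and closing this rigidity argument is where the genuine difficulty lies.

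Finally, for the critical bound (\ref{key theorem point 2}), note that $\dot{B}^{\frac12}_{2,\infty}$ is itself scale invariant under $\Lambda_{\lambda,x_{0}}$, so this is a statement of propagation of a critical norm and does not follow from any $\dot{H}^s$ embedding. I would exploit the heat-semigroup characterization $\| f \|_{\dot{B}^{\frac12}_{2,\infty}} \approx \sup_{t>0} t^{\frac12(s - \frac12)} \| e^{t\Delta} f \|_{\dot{H}^s}$ together with the mild (Duhamel) formulation: the rate control $\| \Phi(\tau) \|_{\dot{H}^s} \lesssim (1-\tau)^{-\frac12(s - \frac12)}$ coming from the bound on $g_{\Phi_0}$ feeds into the bilinear term, and the resulting time integrals converge precisely because $s - \frac12 < 1$. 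This gives $\sup_{\tau < 1} \| \Phi(\tau) \|_{\dot{B}^{\frac12}_{2,\infty}} \leq C$ uniformly, a bound inherited at $\tau = 0$ by $\Phi_{0}$, placing it in $\dot{H}^s \cap \dot{B}^{\frac12}_{2,\infty}$ and completing the proof.
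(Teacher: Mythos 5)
The first half of your argument --- restart-and-rescale to normalize the blow-up time to $1$, boundedness of the rescaled data in $\dot{H}^s$, profile decomposition of that sequence, and a rigidity argument using the almost-orthogonality of the $\dot{H}^s$ norms together with the lower bound (\ref{remark c}) to isolate one profile achieving $\sup = \limsup = M^{\sigma_{s}}_{c}$ --- is essentially the paper's proof of (\ref{key theorem point 1}) (Step $1$ of Section $2$ together with Proposition \ref{proposition critical element }). One caveat: the paper does not claim a single nontrivial profile with vanishing remainder; it shows only that exactly one \emph{constant-scale} profile has lifespan $1$, the others surviving harmlessly past time $1$ or being carried by orthogonal scales. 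That weaker conclusion suffices, and your stronger claim is not needed (nor established).

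Your last step, however, is circular. The Duhamel/bilinear estimate you invoke controls only the fluctuation $NS(\Phi_{0})(\tau) - e^{\tau\Delta}\Phi_{0}$ in $\dot{B}^{\frac{1}{2}}_{2,\infty}$ (this is exactly Lemma \ref{fluctuation lemma}); to pass to $NS(\Phi_{0})(\tau)$ itself you must add back $e^{\tau\Delta}\Phi_{0}$, whose $\dot{B}^{\frac{1}{2}}_{2,\infty}$ norm is finite only if $\Phi_{0}$ already lies in $\dot{B}^{\frac{1}{2}}_{2,\infty}$ --- which is the very membership you are trying to prove, and which cannot come from $\dot{H}^s$ alone since $\dot{H}^s \not\hookrightarrow \dot{B}^{\frac{1}{2}}_{2,\infty}$ for $s>\frac12$ (low frequencies obstruct). ``Inheriting the bound at $\tau=0$'' therefore assumes the conclusion. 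The paper closes this loop structurally, in Section $6$: the minimizing data $v_{0,n}$ are by construction rescalings of $NS(\Psi_{0})(t_{n})$ for an already-built sup-critical $\Psi$, and are split into the rescaled fluctuation $\phi_{n} = \tau_{n}^{\frac12} B(\Psi,\Psi)(t_{n},\tau_{n}^{\frac12}\,\cdotp)$, which \emph{is} bounded in $\dot{B}^{\frac{1}{2}}_{2,\infty}\cap\dot{B}^{s'}_{2,\infty}$ by Lemma \ref{fluctuation lemma} applied to $\Psi$, plus the rescaled tendency $\tau_{n}^{\frac12}e^{t_{n}\Delta}\Psi_{0}(\tau_{n}^{\frac12}\,\cdotp)$, which tends to $0$ in $\dot{H}^s$ because $\tau_{n}\to 0$. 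Hence the profiles of $v_{0,n}$ are those of $\phi_{n}$; the extra Besov bounds exclude small and large scales (Lemma \ref{petite et grande echelle}), so each profile is a weak limit of plain translates of $\phi_{n}$ and inherits the critical bound by weak lower semicontinuity. Only after $\Phi_{0}\in\dot{B}^{\frac{1}{2}}_{2,\infty}$ is secured in this way does your final observation --- fluctuation estimate plus triangle inequality --- yield the uniform bound (\ref{key theorem point 2}).
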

\noindent We postpone the proof of (\ref{key theorem point 1}) of the Key Theorem \ref{Big key theorem} to the next section. The proof of (\ref{key theorem point 2}) will be given in Section $5$. We stress on the fact that (\ref{key theorem point 2}) is somewhat close to a question raised by the paper of I. Gallagher, G. Koch and F. Planchon \cite{GKP2}, in which they prove that for any initial data in the critical Besov space $\dot{B}^{-1+ \frac{3}{p}}_{p, q}$, with $3<p,q<\infty$, the $NS$-solution, (the lifespan of which is assumed finite) becomes unbounded at the blow-up time. Let us say a few words about the limit case~$\dot{B}^{-1+ \frac{3}{p}}_{p, \infty}$. We may wonder if the result holds in the limit case $q=\infty$. As far as the author is aware, the answer is still open. Actually, if it holds, \textit{a fortiori} it holds in the smaller space $\dot{B}^{\frac{1}{2}}_{2, \infty}$, by vertue of the embedding $\ds{\dot{B}^{\frac{1}{2}}_{2, \infty}  \hookrightarrow \dot{B}^{-1+ \frac{3}{p}}_{p, \infty}}$. In others terms, it would mean there is no blowing-up solution, bounded in the critical space $\dot{B}^{\frac{1}{2}}_{2, \infty}$. This is related to the concern of our paper since we build some blowing-up solutions bounded in this critical space, under the assumption of blow up at minimal rate. We mention the very interesting work  of H. Jia and V. $\breve{\mathrm{S}}$ver$\acute{\mathrm{a}}$k \cite{JV}, where they prove that $-1$-homogeneous initial data generate global $-1$-homogeneous  solutions. Unfortunately, the uniqueness of such solutions is not guaranteed.

\medbreak
\section{Existence of sup-critical solutions}
\vskip 0.5cm
\noindent The goal of this section is to give a partial proof of Key Theorem \ref{Big key theorem}. It relies on the two Lemmas below. 

\begin{lemma}\sl{(Existence of sup-critical solutions in $\dot{H}^s $)\\
\label{general lemma for critical element }
Let $(v_{0,n})_{n \in \N}$ be a bounded sequence in $\dot{H}^s$ such that
\begin{equation}
\tau^{*}(v_{0,n}) = 1 \quad \hbox{ and} \quad \hbox{ for any} \,\,\, \tau < 1, \quad (1-\tau)\, \| NS(v_{0,n})(\tau,\cdotp) \|^{\sigma_{s}}_{\dot{H}^s} \leqslant\,\, M^{\sigma_{s}}_{c} + \varepsilon_{n},
\end{equation}
where $\varepsilon_{n}$ is a generic sequence which tends to $0$ when $n$ goes to $+\infty$.\\
Then, there exists $\Psi_{0}$ in $\dot{H}^s$ such that $\Psi \eqdefa NS(\Psi_{0})$ is a sup-critical solution blowing up at time $1$ and satisfies
\begin{equation}
\sup_{\tau < 1} (1 - \tau)\,  \| NS(\Psi_{0})(\tau) \|^{\sigma_{s}}_{\dot{H}^s} \,=\,    \limsup_{\tau \to 1} (1 - \tau)\,  \| NS(\Psi_{0})(\tau) \|^{\sigma_{s}}_{\dot{H}^s} =\, M^{\sigma_{s}}_{c}.
\end{equation}
\noindent Moreover, the initial data of such element is a weak limit of the sequence $(v_{0,n})$ translated, e.g 
\begin{equation}
\ds{\exists\,\, (x_{0,n})_{n \geqslant 0}, \quad   v_{0,n}(\cdotp + x_{0,n} ) \rightharpoonup_{n \to +\infty} \Psi_{0}}.
\end{equation} 
}
\end{lemma}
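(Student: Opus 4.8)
The strategy I would follow is a concentration--compactness argument built on a profile decomposition of the data, in the spirit of Kenig--Merle and of the work of Gallagher, Koch and Planchon \cite{GKP}. Since $(v_{0,n})$ is bounded in $\dot{H}^s$, after extracting a subsequence it admits an $\dot{H}^s$ profile decomposition: there are fixed profiles $(\phi^{j})_{j}$, scales $\lambda_{j,n}>0$ and cores $x_{j,n}\in\R^3$, with the usual orthogonality of the pairs $(\lambda_{j,n},x_{j,n})$, such that
\[
v_{0,n}=\sum_{1\le j<J}\lambda_{j,n}^{\,s-\frac32}\,\phi^{j}\Bigl(\frac{\cdot-x_{j,n}}{\lambda_{j,n}}\Bigr)+r_{n}^{J},
\]
the remainder $r_{n}^{J}$ being small in a critical norm (hence generating, for $n$ and $J$ large, a global solution) as $J\to\infty$. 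The candidate $\Psi_0$ will be one of the profiles $\phi^{j}$ and the translations $x_{0,n}$ in the statement will be the corresponding cores.

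The heart of the matter is to combine this with the nonlinear profile decomposition for Navier--Stokes: each profile $\phi^{j}$ generates a maximal solution $NS(\phi^{j})$, and by orthogonality the lifespan of $NS(v_{0,n})$ is, asymptotically, the smallest of the (rescaled) lifespans of these nonlinear profiles, the superposition approximating $NS(v_{0,n})$ up to an error controlled by the smallness of $r_{n}^{J}$. As $\ts(v_{0,n})=1$, this smallest rescaled lifespan tends to $1$. I would then exploit the minimality built into $M^{\sigma_{s}}_{c}$: by its very definition as the infimum of the blow-up quantity over all finite-time singular data, any profile whose solution blows up carries a blow-up quantity at least $M^{\sigma_{s}}_{c}$; together with an almost-additivity (orthogonality) estimate for $(1-\tau)\,\|\cdot\|^{\sigma_{s}}_{\dot{H}^s}$ near the singular time, this forces \emph{exactly one} profile, say $\phi^{j_0}$, to become singular, every other profile and the remainder producing global solutions. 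I would set $\Psi_0:=\phi^{j_0}$ and $x_{0,n}:=x_{j_0,n}$; by orthogonality every other profile and the remainder converge weakly to $0$ once recentred at $x_{j_0,n}$ and rescaled by $\lambda_{j_0,n}$, so that, after normalising the dominant scale using the scaling invariance and the constraint that singularity occurs exactly at time $1$, one obtains $v_{0,n}(\cdot+x_{0,n})\rightharpoonup\Psi_0$.

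It then remains to verify that $\Psi:=NS(\Psi_0)$ is sup-critical with the announced equalities. By construction $\Psi_0=\phi^{j_0}$ is the unique singular profile, so (after the scale normalisation above) $\Psi$ blows up at time $1$; the uniform control recorded below confirms that it is regular on $[0,1)$, hence $\ts(\Psi_0)=1$. Now fix $\tau<1$: the hypothesis yields $\|NS(v_{0,n})(\tau)\|_{\dot{H}^s}\le\bigl((M^{\sigma_{s}}_{c}+\varepsilon_{n})/(1-\tau)\bigr)^{1/\sigma_{s}}$, and more generally a uniform $X^{s}$ bound on $[0,\tau]$. By weak stability of the Navier--Stokes flow and its translation invariance, $NS(v_{0,n})(\tau,\cdot+x_{0,n})\rightharpoonup\Psi(\tau)$, and weak lower semicontinuity of the $\dot{H}^s$-norm gives $(1-\tau)\,\|\Psi(\tau)\|^{\sigma_{s}}_{\dot{H}^s}\le M^{\sigma_{s}}_{c}$ for every $\tau<1$, whence $\sup_{\tau<1}(1-\tau)\,\|\Psi(\tau)\|^{\sigma_{s}}_{\dot{H}^s}\le M^{\sigma_{s}}_{c}$. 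Finally, since $\Psi$ is singular in finite time, the definition of $M^{\sigma_{s}}_{c}$ as an infimum forces $\limsup_{\tau\to1}(1-\tau)\,\|\Psi(\tau)\|^{\sigma_{s}}_{\dot{H}^s}\ge M^{\sigma_{s}}_{c}$; combined with the trivial inequality $\limsup\le\sup\le M^{\sigma_{s}}_{c}$ this forces the three quantities to coincide with $M^{\sigma_{s}}_{c}$.

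The genuine difficulty is concentrated in the second step, and specifically in the scale analysis forced by the non-criticality of $\dot{H}^s$. Because the $\dot{H}^s$-invariant rescaling differs from the Navier--Stokes one by an amplitude factor $\lambda^{s-\frac12}$, the nonlinear profiles are Navier--Stokes solutions of amplitude-modulated data, and one must rule out scale degeneration ($\lambda_{j_0,n}\to0$ or $\to\infty$) for the dominant profile — equivalently, show that singularity formation at the \emph{fixed} time $1$ pins its scale — so that the recentring reduces to a pure translation and the weak limit $\Psi_0$ is nontrivial. Making rigorous both the nonlinear profile decomposition with its stability estimate in this $\dot{H}^s$ framework and the almost-additivity of the blow-up quantity is where the real work lies; by contrast the limiting argument of the third step is comparatively routine, relying only on weak lower semicontinuity and the lower bound (\ref{relation Ac et ts}).
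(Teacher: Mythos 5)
Your overall architecture (profile decomposition of the data, nonlinear superposition, isolation of a single singular profile via the minimality built into $M^{\sigma_{s}}_{c}$, then the upper bound by orthogonality and the lower bound by the definition of $M^{\sigma_{s}}_{c}$ as an infimum) matches the paper's, which proves the lemma through Proposition \ref{proposition critical element } using the structure Lemma \ref{lemme allure de la solution}. But you leave the decisive step open, and the mechanism you propose for it is not the one that works. You write that one must ``rule out scale degeneration for the dominant profile'' by showing that ``singularity formation at the fixed time $1$ pins its scale'', and you defer making this rigorous. That is exactly where the subcriticality of $\dot{H}^{s}$ must be used, and the paper's resolution is structural rather than dynamical: in Lemma \ref{lemme allure de la solution} the profiles whose scales tend to $0$ or $+\infty$ are not propagated nonlinearly at all --- they are absorbed into the free evolution $e^{t\Delta}\bigl(\sum_{j\in\mathcal{J}_{1}^{c}}\Lambda^{\frac{3}{p}}_{\lambda_{n,j},x_{n,j}}\varphi^{j}+\psi_{n}^{J}\bigr)$, because (Proposition \ref{smallbigscaling}) after a frequency cut-off they vanish in $\dot{H}^{s_{1}}$ for $s_{1}<s$ (small scales) or in $\dot{H}^{s_{2}}$ for $s_{2}>s$ (large scales), so their nonlinear interactions are negligible. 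Consequently only constant-scale profiles can carry the blow-up at time $1$, no scale normalisation or ``pinning'' argument is ever needed, and the recentring is a pure translation from the outset. Without this (or an equivalent) input your argument does not identify a nontrivial $\Psi_{0}$: a dominant profile with degenerating scale would, after your recentring and rescaling, have weak limit $0$ in the fixed frame.

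A second, related gap: to get $\sup_{\tau<1}(1-\tau)\|\Psi(\tau)\|^{\sigma_{s}}_{\dot{H}^{s}}\leqslant M^{\sigma_{s}}_{c}$ you invoke ``weak stability of the Navier--Stokes flow'', i.e.\ $NS(v_{0,n})(\tau,\cdot+x_{0,n})\rightharpoonup\Psi(\tau)$, as if it were an off-the-shelf tool. It is not; it is essentially equivalent to the nonlinear profile decomposition together with its orthogonality identity (\ref{Pythagore}), whose proof occupies Section $8$ of the paper. The paper extracts the needed inequality directly from (\ref{Pythagore}), namely $\|NS(v_{0,n})(\tau)\|^{2}_{\dot{H}^{s}}\geqslant\|NS(\varphi^{j_{0}})(\tau)\|^{2}_{\dot{H}^{s}}-|\gamma^{J}_{n}(\tau)|$, with $\gamma^{J}_{n}$ small uniformly in time. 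Finally, for the uniqueness of the singular profile the paper does not rely only on the minimality of $M^{\sigma_{s}}_{c}$ but on the universal lower bound (\ref{remark c}), $(1-\tau)^{2/\sigma_{s}}\|NS(\varphi^{j})(\tau)\|^{2}_{\dot{H}^{s}}\geqslant c^{2}$ valid at \emph{every} time $\tau$, which is what permits summing the contributions of several putative singular profiles at a common sequence of times $\tau_{\varepsilon}\to1$. These are the concrete pieces your sketch would need in order to become a proof.
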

\noindent The proof of Lemma \ref{general lemma for critical element } will be the purpose of Section $4$. It relies essentially on scaling argument and profile theory, which will be introduced in the next Section $3$.
\medbreak
 
\begin{lemma}\sl{(Fluctuation estimates)\\
\label{fluctuation lemma}
\noindent Let $u=NS(u_{0})$ be a NS-solution associated with a data $ u_{0} \in  \dot{H}^s $, with $\ds{\frac{1}{2} < s < \frac{3}{2}}$, such that 
$$ (\ts(u_{0}) - t)^{\frac{1}{\sigma_{s}}}\,  \| NS(u_{0})(t) \|_{\dot{H}^s} \leqslant M.$$
\noindent Then, the following estimates on the fluctuation part $\ds{B(u,u)(t) \eqdefa u - e^{t\, \Delta}u_{0} }$ yield
\begin{equation}
\hbox{ for any}\quad s <s' < 2s-\frac{1}{2}, \quad (\ts(u_{0})- t)^{\frac{1}{\sigma_{s'}}}\,  \| B(u,u)(t) \|_{\dot{H}^{s'}}\, \leqslant F_{s'}(M^2)
\end{equation}
\noindent Moreover, for the critical case $\ds{ = \frac{1}{2}}$, we have
\begin{equation}
\| B(u,u)(t) \|_{\dot{B}^{\frac{1}{2}}_{2,\infty}}\, \leqslant C\,M^2.
\end{equation}
}
\end{lemma}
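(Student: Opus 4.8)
The plan is to start from the Duhamel (mild) formulation, in which the fluctuation is exactly the bilinear operator
$$ B(u,u)(t) = -\int_0^t e^{(t-\tau)\Delta}\,\mathbb{P}\,\dive\,(u\otimes u)(\tau)\,d\tau\virgp $$
and to estimate the integrand directly (this is a pointwise-in-$u$ estimate, not a fixed-point argument, since $u$ is assumed to satisfy the $\dot H^s$-bound). I would combine three standard facts: the heat semigroup sends $\dot H^{\sigma}$ to $\dot H^{s'}$ with norm $\lesssim \theta^{-(s'-\sigma)/2}$ for $s'>\sigma$; the operator $\mathbb{P}\,\dive$ is a Fourier multiplier homogeneous of degree one, hence maps $\dot H^{\sigma+1}$ to $\dot H^{\sigma}$; and in dimension three $\|fg\|_{\dot H^{a+b-3/2}}\lesssim\|f\|_{\dot H^a}\|g\|_{\dot H^b}$ whenever $a,b<\frac32$ and $a+b>0$. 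Taking $a=b=s$ gives $\|(u\otimes u)(\tau)\|_{\dot H^{2s-3/2}}\lesssim\|u(\tau)\|_{\dot H^s}^2$, whence
$$ \|e^{(t-\tau)\Delta}\mathbb{P}\,\dive\,(u\otimes u)(\tau)\|_{\dot H^{s'}}\lesssim(t-\tau)^{-\gamma}\,\|u(\tau)\|_{\dot H^s}^2,\qquad \gamma\eqdefa\frac{s'-2s+\frac52}{2}\cdotp $$
Here $s'>s$ forces $s'>2s-\frac52$, so $\gamma>0$ and the smoothing step is legitimate, while $s'<2s-\frac12$ is precisely the condition $\gamma<1$ that renders $(t-\tau)^{-\gamma}$ integrable at $\tau=t$.

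Next I would insert the hypothesis as $\|u(\tau)\|_{\dot H^s}^2\le M^2(\ts-\tau)^{-(s-\frac12)}$ (using $2/\sigma_s=s-\frac12$) and reduce the whole statement to the scalar integral
$$ \|B(u,u)(t)\|_{\dot H^{s'}}\lesssim M^2\int_0^t(t-\tau)^{-\gamma}(\ts-\tau)^{-(s-\frac12)}\,d\tau\cdotp $$
With $\sigma=\ts-\tau$ and then $\sigma=(\ts-t)\rho$ this becomes $(\ts-t)^{1-\gamma-(s-1/2)}$ times a convergent Beta-type integral in $\rho$; since $\gamma+(s-\frac12)=\frac{s'}{2}+\frac34$, the exponent is $-\big(\frac{s'}{2}-\frac14\big)=-\frac{1}{\sigma_{s'}}$, and convergence of the $\rho$-integral at infinity requires $\gamma+(s-\frac12)>1$, i.e. $s'>\frac12$, which holds. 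This gives $\|B(u,u)(t)\|_{\dot H^{s'}}\lesssim M^2(\ts-t)^{-1/\sigma_{s'}}$, the first assertion, with $F_{s'}(M^2)=C_{s'}M^2$.

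At the critical endpoint the same computation breaks down: for $s'=\frac12$ one has exactly $\gamma+(s-\frac12)=1$, the $\rho$-integral diverges logarithmically, and the $\dot H^{1/2}$-norm is borderline. I would therefore localise in frequency. Writing $\Delta_j$ for the Littlewood--Paley blocks (whose $\ell^\infty$ control of $2^{j/2}\|\Delta_j\cdot\|_{L^2}$ is equivalent, for $0<\alpha<1$, to the finite-difference norm stated in the theorem), Bernstein's inequality, the degree-one multiplier $\mathbb{P}\,\dive$, the heat decay $e^{-c(t-\tau)2^{2j}}$ and the localised product bound $\|\Delta_j(u\otimes u)\|_{L^2}\lesssim2^{-j(2s-3/2)}\|u\|_{\dot H^s}^2$ together give
$$ 2^{j/2}\|\Delta_j B(u,u)(t)\|_{L^2}\lesssim M^2\,2^{j(3-2s)}\int_0^t e^{-c(t-\tau)2^{2j}}(\ts-\tau)^{-(s-\frac12)}\,d\tau\cdotp $$
Using $(\ts-\tau)^{-(s-1/2)}\le(t-\tau)^{-(s-1/2)}$ and the substitution $w=c(t-\tau)2^{2j}$, the integral is at most $(c2^{2j})^{s-3/2}\Gamma(\tfrac32-s)$; because $3-2s=2\big(1-(s-\frac12)\big)$, the powers of $2^{j}$ cancel exactly and the right-hand side is $\lesssim M^2$, uniformly in $j$ and in $t<\ts$. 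Taking the supremum over $j$ yields $\|B(u,u)(t)\|_{\dot B^{1/2}_{2,\infty}}\le C\,M^2$.

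The main obstacle is this endpoint. The strong $\dot H^{s'}$-bound for $s<s'<2s-\frac12$ is a careful but routine application of the bilinear heat estimate; the genuine difficulty is that at $s'=\frac12$ the time weight $(\ts-t)^{-1/\sigma_{s'}}$ degenerates to $1$ while the $\dot H^{1/2}$-estimate fails by a logarithm. The resolution is to abandon the $\ell^2$ summation over frequencies demanded by $\dot H^{1/2}=\dot B^{1/2}_{2,2}$ in favour of the $\ell^\infty$ supremum of $\dot B^{1/2}_{2,\infty}$, for which the frequency-by-frequency heat smoothing supplies a bound independent of $j$. Secondary care is needed only in the index bookkeeping --- verifying $a,b<\frac32$, $a+b>0$ in the product rule and $2s-\frac52<s'<2s-\frac12$ in the smoothing and integrability steps --- which is exactly what singles out the admissible range $s<s'<2s-\frac12$.
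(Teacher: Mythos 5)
Your proof is correct, and both halves rest on the same skeleton as the paper's (Duhamel formula, the product law $\|u\otimes u\|_{\dot H^{2s-\frac32}}\lesssim\|u\|^2_{\dot H^s}$, heat smoothing, and the hypothesis inserted as $\|u(\tau)\|^2_{\dot H^s}\leqslant M^2(\ts-\tau)^{-(s-\frac12)}$), but the execution differs in two places. For the range $s<s'<2s-\frac12$, the paper (Lemma \ref{fluctuation s'}) works frequency by frequency: it estimates $2^{js'}\|\Delta_j B(u,u)(t)\|_{L^2}$, splits according to whether $(\ts-t)2^{2j}$ is $\leqslant 1$ or $\geqslant 1$, handles high frequencies via the heat decay and low frequencies by reduction to the $\dot B^{s}_{2,\infty}$ bound, and so obtains a $\dot B^{s'}_{2,\infty}$ estimate which is then upgraded to the stated $\dot H^{s'}$ estimate by interpolation (Proposition \ref{interpolation}). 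You bypass both the Littlewood--Paley decomposition and the interpolation step by a global semigroup smoothing bound together with an exact Beta-type integral; your index bookkeeping ($\gamma<1\Leftrightarrow s'<2s-\frac12$, convergence of the $\rho$-integral at infinity $\Leftrightarrow s'>\frac12$, resulting exponent $-1/\sigma_{s'}$) checks out, and this is the more direct route to the $\dot H^{s'}$ statement as written. For the critical case the paper again splits the time integral according to whether $(\ts-t')2^{2j}$ is $\leqslant 1$ or $\geqslant 1$ and applies Young's inequality separately on each piece, whereas you use the single pointwise bound $(\ts-\tau)^{-(s-\frac12)}\leqslant(t-\tau)^{-(s-\frac12)}$ followed by a Gamma-function evaluation; the two computations are equivalent (both hinge on $s-\frac12<1$ and on the exact cancellation of the powers of $2^{j}$), yours being marginally shorter, the paper's splitting having the advantage of making visible where the low-frequency obstruction for $0<\alpha<\frac12$ (noted in the remark following Lemma \ref{fluctuation lemma}) comes from.
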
 
\noindent The proof of this lemma is postpone to Section $8$. It merely stems from product laws in Besov spaces, interpolation inequalities and from judicious splitting into low and high frequencies  in the following sense $$(\ts-t)2^{2j} \leqslant 1 \quad  \hbox{and} \quad  (\ts-t)2^{2j} \geqslant 1.$$ 
\begin{remark}
Let us point out that estimates  of Lemma \ref{fluctuation lemma} do not hold if $\ds{0<\alpha < \frac{1}{2}}$, owing to low frequencies. Indeed, arguments similar to the ones used in the proof of Lemma \ref{fluctuation lemma} lead only to the following estimate  $$\| B(u,u)(t) \|_{\dot{B}^{\alpha}_{2,\infty}}\, \leqslant C\, M^2\, \ts(u_{0})^{\frac{1}{2}(\alpha-\frac{1}{2})}.$$
\end{remark}

\vskip 0.3cm
 \noindent \textit{Partial proof of Key Theorem \ref{Big key theorem} }\\
 \noindent In all this text, we denote by $(\varepsilon_{n}) $ a non increasing sequence, which tends to $0$, when $n$ tend to $+\infty$. \\
\noindent $\bullet$ Step $1$ : Existence of sup-critical elements in $ \dot{H}^s$, with $\ds{\frac{1}{2} < s < \frac{3}{2}}\cdotp$\\
\noindent Let us consider the sequence $\ds{(M_{c} + \varepsilon_{n})_{n \geqslant 0}}$. By definition of $M_{c}$, there exists a sequence $(u_{0,n})$ belonging to ~$\dot{H}^s$, with a finite lifespan $\ts(u_{0,n})$,  such that for any ~$t < \ts(u_{0,n})$  : 
$$ \limsup_{t \to \ts(u_{0})} (\ts(u_{0,n}) - t)\, \| NS(u_{0,n}) \|^{\sigma_{s}}_{\dot{H}^s} \leqslant M^{\sigma_{s}}_{c} + \varepsilon_{n}.$$
By definition of $\ds{\limsup}$, there exists a nondecreasing sequence of time ~$t_{n}$, converging to $\ts(u_{0})$, such that 
\begin{equation}
\label{limsup tn}
\forall t \geqslant t_{n},\,\, (\ts(u_{0,n}) - t)\, \| NS(u_{0,n})(t,x) \|^{\sigma_{s}}_{\dot{H}^s} \leqslant M^{\sigma_{s}}_{c} + \varepsilon_{n}.
\end{equation} 
By rescaling, we consider the sequence $$v_{0,n}(y) = \bigl(\ts(u_{0,n}) - t_{n}\bigr)^\frac{1}{2} \, NS(u_{0,n})\bigl( t_{n},(\ts(u_{0,n}) - t_{n}\bigr)^\frac{1}{2}\,y \bigr).$$
\noindent and we have
\begin{equation}
\label{rescaling donnee initiale}
\begin{split}
 \|v_{0,n} \|^{\sigma_{s}}_{\dot{H}^s} &= \bigl(\ts(u_{0,n})-t_{n}\bigr)\, \|NS(u_{0,n})(t_{n}) \|^{\sigma_{s}}_{\dot{H}^s}. \\
\end{split}
\end{equation}
\noindent By vertue of (\ref{limsup tn}), the sequence $(v_{0,n})_{n \geqslant 1}$ is bounded $\bigl(\hbox{by}\,\, \ds{M^{\sigma_{s}}_{c} + \varepsilon_{0}}\bigr)$ in the space $\dot{H}^s$. Moreover, such a sequence generates a Navier-Stokes solution, which keeps on living until the time~$\tau^* = 1$ and satisfies 
\begin{equation}
\label{rescaling solution}
\begin{split}
 NS(v_{0,n})(\tau,y)&= \bigl(\ts(u_{0,n})-t_{n}\bigr)\,^\frac{1}{2} \, NS(u_{0,n})\bigl( t_{n} + \tau\,\bigl(\ts(u_{0,n})-t_{n}\bigr)\, ,\bigl(\ts(u_{0,n})-t_{n}\bigr)\,^\frac{1}{2}\,y \bigr).
\end{split}
\end{equation}
\noindent We introduce $\ds{\widetilde{t_{n}} = t_{n} + \tau\,\bigl(\ts(u_{0,n})-t_{n}\bigr)\,}$. Notice that, because of scaling, an easy computation yields
\begin{equation}
\label{proposition comparative}
(1-\tau)\, \|NS(v_{0,n})(\tau) \|^{\sigma_{s}}_{\dot{H}^s} = \bigl(\ts(u_{0,n}) - \widetilde{t_{n}}\bigr) \, \|NS(u_{0,n})\bigl( \widetilde{t_{n}} \bigr)\|^{\sigma_{s}}_{\dot{H}^s}.
\end{equation}
\noindent As $\widetilde{t_{n}} \geqslant t_{n}$ for any $n$ (by definition of $\widetilde{t_{n}}$) we combine (\ref{proposition comparative}) with (\ref{limsup tn}) and we get, for any $\tau \in [0,1[$,
$$(1-\tau)\| NS(v_{0,n})(\tau,x) \|^{\sigma_{s}}_{\dot{H}^s} \leqslant M^{\sigma_{s}}_{c} + \varepsilon_{n}.$$
\noindent The sequence $(v_{0,n})$ satisfies the hypothesis of Lemma \ref{general lemma for critical element }. Applying it, we build a sup-critical solution $ \Phi = NS(\Psi_{0}) $ in $\dot{H}^s$ which blows up at time $1$, e.g
$$  \limsup_{\tau \to 1} (1 - \tau)\,  \| NS(\Psi_{0})(\tau) \|^{\sigma_{s}}_{\dot{H}^s} =\, M^{\sigma_{s}}_{c}.$$
\noindent This proves the first part of the statement of Theorem \ref{Big key theorem}. \\

\noindent $\bullet$ Step $2$ : Existence of sup-critical elements in $\dot{H}^s \cap \dot{B}^{\frac{1}{2}}_{2,\infty} \cap \dot{H}^{s'} $, with $s$ and $s'$ such that $\ds{s<s'< 2s -\frac{1}{2}\cdotp}$\\ This will be proved in Section $6$. Notice that proving that $NS(\Psi_{0})$ is bounded in the Besov space~$\dot{B}^{\frac{1}{2}}_{2,\infty}$ is equivalent to prove that~$\Psi_{0}$ belongs to $\dot{B}^{\frac{1}{2}}_{2,\infty}$, since, by vertue of Lemma \ref{fluctuation lemma}, the fluctuation part is bounded in~$\dot{B}^{\frac{1}{2}}_{2,\infty}$ and obviously we have
$$ \| NS(\Psi_{0})(t)\|_{\dot{B}^{\frac{1}{2}}_{2,\infty}}  \leqslant\| NS(\Psi_{0})(t) \,\,-\,\, e^{t\Delta}\Psi_{0}\|_{\dot{B}^{\frac{1}{2}}_{2,\infty}} \quad + \quad  \| e^{t\Delta}\Psi_{0}\|_{\dot{B}^{\frac{1}{2}}_{2,\infty}}.$$

\medskip
\noindent The paper is structured as follows. In Section $3$, we recall the main tools of this paper. Essentially, it deals with the profile theory of P. G\'erard \cite{PG} and a structure lemma concerning a $NS$-solution associated with a sequence which satisfies hypothesis of profile theory. We also recall some basics facts on Besov spaces. \\ 
In Section $4$, we are going to establish the proof of crucial Lemma \ref{general lemma for critical element }, which provides the proof of the first part of Theorem \ref{Big key theorem} : there exists some sup-critical elements in $\dot{H}^s$. The second part of the proof is postponed in Section $6$, where we build some sup-critical elements not only in $\dot{H}^s$, but also in others spaces, such as $\dot{B}^{\frac{1}{2}}_{2,\infty} $ and $\dot{B}^{s'}_{2,\infty} $, with $\ds{s<s'< 2s -\frac{1}{2}\cdotp}$ To carry out this, we need some estimates on the fluctuation part of the solution, which will be provided in Section $5$.\\ Then in Section $7$, we give an analogue sup-inf critical criteria. It turns out that among sup-critical solutions, there exists some of them which are sup-inf-critical in the sense of they reach the biggest infimum limit. Section $8$ is devoted to the proof of Lemma \ref{lemme allure de la solution}, which gives the structure of a Navier-Stokes solution associated with a bounded sequence of data in $\dot{H}^s$. We recall to the reader that such structure result has been partially proved in \cite{P}, except for the orthogonality property of Navier-Stokes solution in $\dot{H}^s$-norm. As a result, we give the proof of such a property, after reminding the ideas of the complete proof. 
  
\medbreak

\section{Profile theory and Tool Box}
\noindent We recall the fundamental result due to P. G\'erard : the profile decomposition of a bounded sequence in the Sobolev space~$\dot{H}^s$. The original motivation of this theory was the desciption, up to extractions, of the defect of compactness in Sobolev embeddings (see for instance the pionneering works of P.-L. Lions in \cite{PLL}, \cite{PLL2} and H. Brezis, J.-M. Coron in \cite{BC}. Here, we will use the theorem of P. G\'erard \cite{PG}, which gives, up to extractions, the structure of a bounded sequence of~$\dot{H}^s$, with~$s$ between~$0$ and~$\ds{\frac{3}{2}} \cdotp$ More precisely, the defect of compactness in the critical Sobolev embedding~$\ds{\dot{H}^s \subset L^p}$ is described in terms of a sum of rescaled and translated orthogonal profiles, up to a small term in~$L^p$. For more details about the history of the profile theory, we refer the reader to the paper \cite{P}.  

\medbreak
\begin{theorem} \sl{(Profile Theorem \cite{PG})\\ 
\label{theo profiles}
Let $(u_{0,n})_{n \in \N}$ be a bounded sequence  in $\dot{H}^s$. Then, up to an extraction:\\ 
- There exists a sequence vectors fields, called profiles $(\varphi^{j})_{j \in \N}$ in $\dot{H}^s$.\\ 
- There exists a sequence of scales and cores $(\lambda_{n,j},x_{n,j})_{n,j \in \N}$, such that, up to an extraction
$$\forall J \geqslant 0,\,\, u_{0,n}(x) = \sum_{j=0}^{J} \Lambda^{\frac{3}{p}}_{\lambda_{n,j},x_{n,j}}\varphi^{j}(x) + \psi_{n}^{J}(x) \quad \hbox{with}\quad \lim_{J \to +\infty}\limsup_{n \to+\infty}\|\psi_{n}^{J}\|_{L^{p}} =0, \quad \hbox{and} \quad p =\frac{6}{3-2s}\cdotp$$
Where, $(\lambda_{n,j},x_{n,j})_{n \in \N,j \in \N^*}$ are sequences of $(\R_{+}^* \times \R^3)^{\N}$ with the following orthogonality property:  for every integers $(j,k)$ such that $j \neq k$, we have
$$ \hbox{either}\lim_{n \to +\infty}\Bigl(\frac{\lambda_{n,j}}{\lambda_{n,k}} + \frac{\lambda_{n,k}}{\lambda_{n,j}}\Bigr) = +\infty  \quad\hbox {or} \quad  \lambda_{n,j} = \lambda_{n,k} \quad\hbox {and} \quad \lim_{n \to +\infty}\frac{|x_{n,j} - x_{n,k}|}{\lambda_{n,j}} = +\infty.$$
Moreover, for any $J \in \N$, we have the following orthogonality property
\begin{equation}
\label{ortho de la norme}
\| u_{0,n} \|^2_{\dot{H}^s} = \sum_{j=0}^{J} \| \varphi^{j} \|^2_{\dot{H}^s} + \| \psi_{n}^J \|^2_{\dot{H}^s} + \circ(1), \quad\hbox {when} \quad n \to +\infty.
\end{equation}
}\end{theorem}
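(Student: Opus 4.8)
The plan is to establish Theorem \ref{theo profiles} by the concentration--compactness method of P. G\'erard, measuring the defect of compactness of the embedding $\dot{H}^s \hookrightarrow L^p$ (with $\ds{\frac1p = \frac12 - \frac s3}$, equivalently $p=\frac{6}{3-2s}$) in the $L^p$-norm of the successive remainders. The natural scaling here is $\Lambda^{3/p}_{\lambda,x_{0}}$, which, since $\frac3p = \frac32 - s$, preserves \emph{simultaneously} the $\dot{H}^s$ and the $L^p$ norms; this is exactly what makes both the profiles and the Pythagorean identity \eqref{ortho de la norme} scale invariant. The scheme is: (i) a single-profile extraction producing one scale, one core and one nonzero profile, together with a quantitative lower bound on its $\dot H^s$-norm; (ii) an iteration of (i) on the remainders, followed by a diagonal extraction; (iii) orthogonality bookkeeping, yielding the scale/core orthogonality, the norm orthogonality, and the smallness of the $L^p$-remainder.

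Step (i) is the analytic heart. I would rely on the refined Sobolev inequality of G\'erard--Meyer--Oru,
\[
\|f\|_{L^p} \;\leq\; C\,\|f\|_{\dot H^s}^{\theta}\,\|f\|_{\dot B^{\,s-3/2}_{\infty,\infty}}^{\,1-\theta},\qquad \theta=\frac2p,
\]
in which the weak norm $\dot B^{s-3/2}_{\infty,\infty}$ is precisely the $\Lambda^{3/p}$-invariant Besov norm. If $(f_{n})$ is bounded in $\dot H^s$ by $A$ and $\limsup_{n}\|f_{n}\|_{L^p}=\ell>0$, this inequality forces $\|f_{n}\|_{\dot B^{s-3/2}_{\infty,\infty}}\geqslant \delta$ along a subsequence, with $\delta$ an explicit increasing function of $\ell/A$. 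By definition of the $\dot B^{s-3/2}_{\infty,\infty}$-norm there are integers $j_{n}$ and points $x_{n}$ with $2^{j_{n}(s-3/2)}\,|\Delta_{j_{n}}f_{n}(x_{n})|\gtrsim\delta$, where $\Delta_{j}$ denotes the Littlewood--Paley block at frequency $2^{j}$. Setting $\lambda_{n}=2^{-j_{n}}$ and $g_{n}(y)\eqdefa \lambda_{n}^{3/p}\,f_{n}(\lambda_{n}y+x_{n})$ (so that $f_{n}-\Lambda^{3/p}_{\lambda_{n},x_{n}}\varphi$ will be the next remainder), the sequence $(g_{n})$ is bounded in the Hilbert space $\dot H^s$, hence $g_{n}\rightharpoonup\varphi$ up to extraction. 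The lower bound rescales to $|\Delta_{0}g_{n}(0)|\gtrsim\delta$; testing against the (Schwartz, hence $\dot H^{-s}$) kernel of $\Delta_{0}$ and passing to the weak limit gives $|\Delta_{0}\varphi(0)|\gtrsim\delta$, whence by Bernstein $\|\varphi\|_{\dot H^s}\gtrsim\delta$. This is the crucial quantitative point: \emph{any} profile extracted from a remainder whose $L^p$-norm is $\geqslant\ell$ has $\dot H^s$-norm bounded below by a fixed increasing function of $\ell$.

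For Steps (ii)--(iii), I set $\psi_{n}^{-1}=u_{0,n}$; having built $\varphi^{0},\dots,\varphi^{J-1}$ with scales/cores $(\lambda_{n,j},x_{n,j})$, I apply Step (i) to $\psi_{n}^{J-1}$ to produce $\varphi^{J}$ and $(\lambda_{n,J},x_{n,J})$, and set $\psi_{n}^{J}=\psi_{n}^{J-1}-\Lambda^{3/p}_{\lambda_{n,J},x_{n,J}}\varphi^{J}$. Since each $\varphi^{J}$ is the \emph{weak} limit in the $J$-th rescaled frame, the remainder $\psi_{n}^{J}$ tends weakly to $0$ in that frame, and a standard argument then yields the scale/core orthogonality (two non-orthogonal frames would detect the same nonzero weak limit, contradicting the construction). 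The norm orthogonality \eqref{ortho de la norme} follows from the Hilbert identity $\|g_{n}\|^2=\|\varphi\|^2+\|g_{n}-\varphi\|^2+\circ(1)$ at each step (using $\langle g_{n}-\varphi,\varphi\rangle\to0$), the cross terms $\langle \Lambda^{3/p}_{\lambda_{n,j},x_{n,j}}\varphi^{j},\Lambda^{3/p}_{\lambda_{n,k},x_{n,k}}\varphi^{k}\rangle_{\dot H^s}\to0$ for $j\neq k$ vanishing precisely because of the scale/core orthogonality. In particular $\sum_{j}\|\varphi^{j}\|^2_{\dot H^s}\leqslant\limsup_{n}\|u_{0,n}\|^2_{\dot H^s}\leqslant A^2$, so $\|\varphi^{J}\|_{\dot H^s}\to0$. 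Feeding this into the lower bound of Step (i) forces $\limsup_{n}\|\psi_{n}^{J}\|_{L^p}\to0$, which is the required remainder condition $\lim_{J}\limsup_{n}\|\psi_{n}^{J}\|_{L^p}=0$. A Cantor diagonal extraction over $J$ then produces a single subsequence valid for all $J$.

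The main obstacle is Step (i): one must turn the \emph{global} information ``$\|f_{n}\|_{L^p}$ does not vanish'' into the \emph{local} data of a concentration scale and core. The refined Sobolev inequality is exactly the tool that does this, and the delicate point is to extract from it a lower bound on $\|\varphi\|_{\dot H^s}$ depending only on $\ell$ and $A$, for it is this \emph{quantitative} bound --- not merely $\varphi\neq0$ --- that makes the iteration terminate and guarantees smallness of the remainder in $L^p$. The remaining orthogonality computations are routine once the invariance of $\Lambda^{3/p}$ and the weak-limit construction are in place.
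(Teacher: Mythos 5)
The paper offers no proof of this statement: it is recalled verbatim from P. G\'erard's work \cite{PG} and used as a black box. Your outline is a correct reconstruction of exactly that argument --- the refined Sobolev inequality $\|f\|_{L^p}\leq C\|f\|_{\dot H^s}^{2/p}\|f\|_{\dot B^{s-3/2}_{\infty,\infty}}^{1-2/p}$ to convert non-vanishing of the $L^p$-norm into a scale, a core and a quantitative $\dot H^s$ lower bound for the weak limit in the rescaled frame, followed by iteration, orthogonality bookkeeping and diagonal extraction --- so it matches the approach the paper implicitly relies on through its citation.
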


\vskip 0.3cm
\noindent Let us recall a structure Lemma, based on the crucial profils theorem of P. G\'erard (see \cite{PG}). Let $(u_{0,n})$ be a bounded sequence in the Sobolev space $\dot{H}^s$, which profile decomposition is given by
$$ u_{0,n}(x) =  \sum_{j \in J}  \Lambda^{\frac{3}{p}}_{\lambda_{n,j},x_{n,j}} \varphi^{j}(x) + \psi_{n}^{J}(x),$$
with the appropriate properties on the error term ~$\psi_{n}^{J}$. By vertue of orthogonality of scales and cores given by Theorem \ref{theo profiles}, we sort profiles according to their scales
\begin{equation}
\label{decomposition 1}
 \begin{split}
 u_{0,n}(x) &= \sum_{\stackrel{j \in \mathcal{J}_{1}}{j \leqslant J}} \varphi^{j}(x-x_{n,j})
           + \sum_{\stackrel{j \in \mathcal{J}^{{}{c}}_{1}}{j \leqslant J}} \Lambda^{\frac{3}{p}}_{\lambda_{n,j},x_{n,j}}\varphi^{j}(x) + \psi_{n}^{J}(x)\\
 \end{split}
\end{equation}
\\
where 
for any $j \in \mathcal{J}_{1}$, for any $n \in \N$,\,  $\lambda_{n,j}  \equiv 1$.

\noindent Under these notations, we claim we have the following structure Lemma of the Navier-Stokes solutions, which proof will be provided in Section $8$.
\begin{lemma} \sl{(Profile decomposition of a sequence of Navier-Stokes solutions)\\
\label{lemme allure de la solution}
Let $(u_{0,n})_{n \geqslant 0}$ be a bounded sequence of initial data in $\dot{H}^s$ which profile decomposition is given by
$$u_{0,n}(x) = \sum_{j=0}^{J} \Lambda^{\frac{3}{p}}_{\lambda_{n,j},x_{n,j}}\varphi^{j}(x) + \psi_{n}^{J}(x).$$
\noindent Then, $\ds{\liminf_{n \geqslant 0}\ts(u_{0,n}) \geqslant \widetilde{T} \eqdefa \ds{\inf_{j \in \mathcal{J}_{1}}{\ts(\varphi^{j})}} }$ and for any $t < \ts(u_{0,n}) $, we have 
\begin{equation}
\label{decomposition de la solution}
\begin{split}  
NS(u_{0,n})(t,x) &= \sum_{j \in \mathcal{J}_{1}} NS(\varphi^{j})(t,x-x_{n,j})\, +\, e^{t\Delta} \Bigl(\sum_{\stackrel{j \in \mathcal{J}^{{}{c}}_{1}}{j \leqslant J}} \Lambda^{\frac{3}{p}}_{\lambda_{n,j},x_{n,j}}\varphi^{j}(x) + \psi_{n}^{J}(x) \Bigr)\,+\,   R_{n}^{J}(t,x)
\end{split} \end{equation}           
where the remaining term $R_{n}^{J}$ satisfies for any $T < \tilde{T}$, $\ds{\lim_{J \to +\infty}\lim_{n \to +\infty} \| R_{n}^{J} \|_{X^{s}_{T}}=0}$.\\
\noindent Moreover, we have the orthogonality property on the $\dot{H}^s$-norm for any $t < \tilde{T}$ 
\begin{equation}
\label{Pythagore}
\begin{split}
 \| NS(u_{0,n})(t)  \|^{2}_{\dot{H}^s} &= \sum_{j \in \mathcal{J}_{1}} \| NS(\varphi^{j})(t) \|^{2}_{\dot{H}^s} \,+ \, \Bigl\|  e^{t\Delta} \Bigl(\sum_{\stackrel{j \in \mathcal{J}^{{}{c}}_{1}}{j \leqslant J}} \Lambda^{\frac{3}{p}}_{\lambda_{n,j},x_{n,j}}\varphi^{j} + \psi_{n}^{J} \Bigr) \Bigr\|^{2}_{\dot{H}^s} +  \gamma_{n}^{J}(t).
\end{split}
\end{equation}
\noindent with $\ds{\lim_{J \to +\infty} \limsup_{n \to +\infty} \sup_{t' < t} | \gamma_{n}^{J}(t')|  = 0}$.
}\end{lemma}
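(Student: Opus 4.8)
The plan is to exhibit an explicit approximate solution, to check that it solves the Navier--Stokes system with initial data $u_{0,n}$ up to an error that is small in $X^s_T$, and then to invoke the stability of the fixed-point scheme underlying Theorem \ref{key theorem} to convert this into both the lifespan bound and the exact decomposition (\ref{decomposition de la solution}). As a preliminary I note that the orthogonality (\ref{ortho de la norme}) forces $\sum_j\|\varphi^j\|^2_{\dot H^s}<\infty$, so by (\ref{relation Ac et ts}) one has $\ts(\varphi^j)\to+\infty$; hence $\widetilde T=\inf_{j\in\mathcal J_1}\ts(\varphi^j)>0$ is attained and, for a fixed $T<\widetilde T$, only finitely many scale-one profiles can threaten existence on $[0,T]$. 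I would then set, for $T<\widetilde T$,
$$u^{J}_{n}(t,x):=\sum_{j\in\mathcal J_1}NS(\varphi^{j})(t,x-x_{n,j})+e^{t\Delta}\Bigl(\sum_{\stackrel{j\in\mathcal J^{c}_{1}}{j\le J}}\Lambda^{\frac3p}_{\lambda_{n,j},x_{n,j}}\varphi^{j}+\psi^{J}_{n}\Bigr),$$
the infinite scale-one sum converging in $X^s_T$ thanks to the orthogonality of the data and the subcriticality $\ts(\varphi^j)\to\infty$ of the tail. Proving the lemma then amounts to controlling $R^{J}_{n}:=NS(u_{0,n})-u^{J}_{n}$, which I do through the Duhamel formulation $NS(u_{0,n})=e^{t\Delta}u_{0,n}+B\bigl(NS(u_{0,n}),NS(u_{0,n})\bigr)$, where $B$ is the bilinear operator $B(f,g)(t)=-\int_0^t e^{(t-t')\Delta}\,\mathbb P\,\dive(f\otimes g)(t')\,dt'$ whose diagonal is the fluctuation of Lemma \ref{fluctuation lemma}.

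The heart of the matter is the interaction estimate: one shows that $u^{J}_{n}$ satisfies the same Duhamel equation up to a source whose $X^s_T$-norm satisfies $\lim_{J\to\infty}\limsup_{n\to\infty}\|\cdot\|_{X^s_T}=0$. Expanding $B(u^{J}_{n},u^{J}_{n})$, the diagonal terms reconstruct each individual profile equation (each $NS(\varphi^{j})$ and the linear heat block solve their own equations), and what remains are the cross terms, of three kinds: (i) $B\bigl(NS(\varphi^{j})(\cdot-x_{n,j}),NS(\varphi^{k})(\cdot-x_{n,k})\bigr)$ between two distinct scale-one profiles, which vanish because the cores separate, $|x_{n,j}-x_{n,k}|\to\infty$; (ii) interactions involving a profile of $\mathcal J^{c}_{1}$, which vanish by the scale orthogonality $\lambda_{n,j}/\lambda_{n,k}+\lambda_{n,k}/\lambda_{n,j}\to+\infty$ after freezing the linear evolution; and (iii) interactions with $e^{t\Delta}\psi^{J}_{n}$, controlled by $\limsup_n\|\psi^{J}_{n}\|_{L^p}\to0$ together with the smoothing of $e^{t\Delta}$ and the product laws in Sobolev/Besov spaces. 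For (i) and (ii) I would first replace each profile by a smooth approximation that is compactly supported in space and time-localised, so that the orthogonality of cores and scales makes the supports disjoint for $n$ large, and then pass to the limit using the continuity of $B$ on $X^s_T$.

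Once $\|R^{J}_{n}\|$-smallness of the source is established and $u^{J}_{n}$ is seen to lie in a bounded ball of $X^s_T$, the perturbation/stability statement attached to Theorem \ref{key theorem} gives existence of $NS(u_{0,n})$ on $[0,T]$ for every $T<\widetilde T$, whence $\liminf_n\ts(u_{0,n})\ge\widetilde T$, and simultaneously the bound $\lim_{J\to\infty}\lim_{n\to\infty}\|R^{J}_{n}\|_{X^s_T}=0$. It then remains to prove the orthogonality (\ref{Pythagore}), the genuinely new point. Starting from (\ref{decomposition de la solution}) and expanding $\|NS(u_{0,n})(t)\|^2_{\dot H^s}$, all cross terms must be shown to be $o(1)$ uniformly on $[0,t]$: the cross terms with $R^{J}_{n}$ are absorbed into $\gamma^{J}_{n}(t)$ by Cauchy--Schwarz and the $X^s_T$-smallness of $R^{J}_{n}$, and the cross terms with the heat-flow block vanish because $e^{t\Delta}(\cdots)$ transports the $L^p$/scale orthogonality of the data. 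The crux is the vanishing of $\langle NS(\varphi^{j})(\cdot-x_{n,j}),NS(\varphi^{k})(\cdot-x_{n,k})\rangle_{\dot H^s}$ for $j\neq k$ in $\mathcal J_1$: the two solutions share the scale $1$ but their cores drift apart, so I would exploit $NS(\varphi^{j})(t)\in\dot H^{s+1}$ for a.e.\ $t$ (membership in $L^2_T(\dot H^{s+1})$), approximate by functions compactly supported in space, use that large translations annihilate the scalar product, and conclude by dominated convergence in time.

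The step I expect to be the main obstacle is precisely this orthogonality of the $\dot H^s$-norm along the nonlinear flow. Orthogonality is transparent at the level of the data, but here it must be controlled for every $t<\widetilde T$ and, as the statement demands, uniformly in the $\sup_{t'<t}$ sense. The delicacy is that the profiles $NS(\varphi^{j})$ are only known to lie in $X^s_T$ rather than to be uniformly smooth, so the density/compact-support reduction has to be carried out at the level of $\dot H^{s+1}$ and then combined with a uniform-in-$n$ equicontinuity in time in order to upgrade the pointwise-in-$t$ cancellation into the required uniform smallness of $\gamma^{J}_{n}$.
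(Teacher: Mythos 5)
Your proposal is correct and follows essentially the same route as the paper: the lifespan bound and the decomposition (\ref{decomposition de la solution}) are obtained by exactly the approximate-solution/perturbation argument that the paper quotes from \cite{P}, and for the Pythagoras identity (\ref{Pythagore}) the paper likewise expands the square and kills the cross terms by approximating the profiles with compactly supported functions, treating the heat-flow block via the frequency truncation at level $\eta$ and the small/large-scale vanishing of Proposition \ref{smallbigscaling}. The uniform-in-time control you single out as the main obstacle is resolved in the paper without any $\dot{H}^{s+1}$ regularity: since $t\mapsto \Lambda^{s}NS(\varphi^{j})(t)$ is continuous on $[0,\tilde{T}-\varepsilon]$, each orbit is compact in $L^2$ and can be covered by finitely many $\alpha$-balls centered at test functions, so the cross terms reduce, uniformly in $t$, to finitely many pairings of fixed test functions translated by $x_{n,j}-x_{n,k}\to\infty$ --- which is precisely the "equicontinuity upgrade" you were asking for.
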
 

\medbreak
\noindent For the convenience of the reader, we recall the usual definition of Besov spaces. We refer the reader to~\cite{BCD}, from page~$63$, for a detailed presentation of the theory and analysis of homogeneous Besov spaces. 
\begin{definition}\sl{
 Let $s$ be in  $\R$, $(p,r)$ in $[1,+\infty]^2$ and $u$ in $\mathcal{S'}$. A tempered distribution $u$ is an element of the Besov space $\dot{B}^{s}_{p,r}$ if $u$ satifies $\ds{\lim_{j \to \infty}\, || \dot{S}_{j} u ||_{L^\infty} = 0 }$ and 
 $$ \| u\|_{\dot{B}^{s}_{p,r}} \eqdefa \Bigl(\sum_{j \in \Z} 2^{jrs}\,\,|| \dot{\Delta}_{j} u ||^{r}_{L^p}\Bigr)^{\frac{1}{r}}  < \infty,$$
 where $\dot{\Delta}_{j}$ is a frequencies localization operator (called Littlewood-Paley operator), defined by 
 $$ \dot{\Delta}_{j}u(\xi) \eqdefa \mathcal{F}^{-1}\bigl(\varphi(2^{-j}|\xi|)\widehat{u}(\xi)\bigr),$$
 with $\varphi \in \mathcal{D}([\frac{1}{2},2])$, such that $\ds{\sum_{j \in \Z} \varphi(2^{-j}t) = 1}$, for any $t>0$.
}\end{definition} 
\vsd
\begin{remark}
\label{equivalence norm besov}
\noindent Notice that the characterization of Besov spaces with positive indices in terms of finite differences is equivalent to the above definition (cf \cite{BCD}). In the case where the regularity index is between $0$ and $1$,  one has the following property. Let $s$ be in $]0,1[$ and $(p,r)$ in $[1,\infty]^2$. A constant $C$ exists such that, for any $u \in \mathcal{S{'}}$, 
\begin{equation}
C^{-1}\, \| u\|_{\dot{B}^{s}_{p,r}} \leqslant \Bigl\|  \frac{\| u(\cdotp -y) - u\|_{L^{p}}}{|y| ^s} \Bigr\|_{L^r(\R^d ; \frac{dy}{|y| ^d})} \, \leqslant C\,  \| u\|_{\dot{B}^{s}_{p,r}}.
\end{equation} 
\end{remark}
\medskip
\begin{remark}
Notice that $\dot{H}^s  \subset \dot{B}^{s}_{2,2}$ and both spaces coincide if $\ds{s < \frac{3}{2}\cdotp}$
\end{remark}

\medskip

\noindent We recall an interpolation property in Besov spaces, which will be useful in the sequel.  
\begin{prop}\sl{
\label{interpolation}
A constant $C$ exists which satisifes the following property. If $s_{1}$ and $s_{2}$ are real numbers such that $s_{1} < s_{2}$ and $\theta \in ]0,1[$, then we have for any $p\in [1,+\infty]$\\
\begin{equation*}
\| u\|_{\dot{B}^{\theta \,s_{1}  + (1-\theta)\, s_{2} }_{p,1}} \,\, \leqslant\,\,  C(s_{1},s_{2},\theta)\,  \| u\|^{\theta}_{\dot{B}^{s_{1}}_{p,\infty}}  \,\,  \| u\|^{1-\theta}_{\dot{B}^{s_{2}}_{p,\infty}}.
\end{equation*}
}\end{prop}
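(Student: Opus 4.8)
The plan is to work entirely at the level of the Littlewood--Paley dyadic blocks and to reduce the claimed estimate to an elementary weighted summation inequality. Writing $a_{j} \eqdefa \| \dot{\Delta}_{j} u \|_{L^{p}}$ for $j \in \Z$, the definition of the Besov norms gives on the one hand $\| u\|_{\dot{B}^{s}_{p,1}} = \sum_{j \in \Z} 2^{js} a_{j}$ with $s = \theta s_{1} + (1-\theta) s_{2}$, and on the other hand the two pointwise bounds
\begin{equation*}
2^{j s_{1}} a_{j} \leq A_{1} \eqdefa \| u\|_{\dot{B}^{s_{1}}_{p,\infty}}, \qquad 2^{j s_{2}} a_{j} \leq A_{2} \eqdefa \| u\|_{\dot{B}^{s_{2}}_{p,\infty}}, \qquad j \in \Z.
\end{equation*}
I may assume $A_{1}$ and $A_{2}$ are both finite and positive, the remaining cases being trivial. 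Setting $\gamma \eqdefa s_{2} - s_{1} > 0$, one has $s - s_{1} = (1-\theta)\gamma > 0$ and $s - s_{2} = -\theta \gamma < 0$, which is exactly the sign information needed below.

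The key step is to split the defining series $\sum_{j} 2^{js} a_{j}$ at a free integer threshold $j_{0} \in \Z$ and to estimate the low- and high-frequency parts with the two different bounds above, so that each part becomes a convergent geometric series. For $j < j_{0}$ I use $a_{j} \leq A_{1} 2^{-j s_{1}}$, which gives $2^{js} a_{j} \leq A_{1} 2^{j(1-\theta)\gamma}$, summable as $j \to -\infty$; for $j \geq j_{0}$ I use $a_{j} \leq A_{2} 2^{-j s_{2}}$, which gives $2^{js} a_{j} \leq A_{2} 2^{-j\theta\gamma}$, summable as $j \to +\infty$. Summing the two geometric series yields
\begin{equation*}
\| u\|_{\dot{B}^{s}_{p,1}} \leq \frac{A_{1}}{1 - 2^{-(1-\theta)\gamma}}\, 2^{j_{0}(1-\theta)\gamma} + \frac{A_{2}}{1 - 2^{-\theta\gamma}}\, 2^{-j_{0}\theta\gamma},
\end{equation*}
where both prefactors are finite and depend only on $s_{1}, s_{2}, \theta$.

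It then remains to optimise over the threshold $j_{0}$. Balancing the two terms amounts to choosing $2^{j_{0}\gamma} \approx A_{2}/A_{1}$; inserting this value makes both summands comparable to $A_{1}^{\theta} A_{2}^{1-\theta}$, which is precisely the asserted bound $C(s_{1},s_{2},\theta)\, \| u\|^{\theta}_{\dot{B}^{s_{1}}_{p,\infty}} \| u\|^{1-\theta}_{\dot{B}^{s_{2}}_{p,\infty}}$. The only technical point I expect to meet is that $j_{0}$ must be an integer, so the optimal real value $\frac{1}{\gamma}\log_{2}(A_{2}/A_{1})$ cannot in general be attained exactly; I would simply take $j_{0}$ to be the nearest integer, which alters the estimate only by the bounded factor $2^{\gamma}$ and is therefore absorbed into the constant. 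Equivalently, one uses the elementary fact that $\inf_{x>0}\bigl(P x^{a} + Q x^{-b}\bigr) = C_{a,b}\, P^{b/(a+b)} Q^{a/(a+b)}$ for $a,b>0$, the restriction of $x$ to powers of $2$ costing only a constant. In short, the statement is nothing but interpolation between the weighted $\ell^{\infty}$ sequence spaces underlying $\dot{B}^{s_{1}}_{p,\infty}$ and $\dot{B}^{s_{2}}_{p,\infty}$, landing in the weighted $\ell^{1}$ space underlying $\dot{B}^{s}_{p,1}$, and it carries no analytic difficulty beyond this choice of $j_{0}$.
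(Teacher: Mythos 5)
Your proof is correct: the splitting of the $\ell^{1}$ sum at a threshold $j_{0}$, the use of the two $\ell^{\infty}$ bounds on either side, and the optimisation $2^{j_{0}\gamma}\approx A_{2}/A_{1}$ (with the integer rounding absorbed into the constant) is exactly the classical argument, and all the sign checks $s-s_{1}=(1-\theta)\gamma>0$, $s-s_{2}=-\theta\gamma<0$ are right. The paper itself gives no proof of this proposition --- it merely recalls it as a known fact with a pointer to the literature --- and the standard proof found there is precisely the one you wrote, so there is nothing to reconcile.
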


\medskip

 \section{Application of profile theory to sup-critical solutions}
\noindent This section is devoted to the proof of Lemma \ref{general lemma for critical element }. The statement given below is actually a bit stronger and clearly entails Lemma \ref{general lemma for critical element }. We shall prove the following proposition.  
\medskip
\begin{prop}\sl{
\label{proposition critical element }
Let $(v_{0,n})_{n \in \N}$ be a bounded sequence in $\dot{H}^s$ such that
\begin{equation*}
\tau^{*}(v_{0,n}) = 1 \quad \hbox{ and} \quad \hbox{ for any} \,\,\, \tau < 1, \quad (1-\tau)\, \| NS(v_{0,n})(\tau,) \|^{\sigma_{s}}_{\dot{H}^s} \leqslant\,\, M^{\sigma_{s}}_{c} + \varepsilon_{n},
\end{equation*}
where $\varepsilon_{n}$ is a generic sequence which tends to $0$ when $n$ goes to $+\infty$.\\
Then, up to extractions, we get the statements below\\
$\bullet$ the profile decomposition of such a sequence of data has a unique profile $\varphi^{j_{0}} $  with constant scale such that $NS(\varphi^{j_{0}} )$ is a sup-critical solution which blows up at time $1$, e.g 
\begin{equation}
\label{prop point 1}
\limsup_{\tau \to 1} (1 - \tau)\,  \| NS(\varphi^{j_{0}})(\tau) \|^{\sigma_{s}}_{\dot{H}^s} =\, M^{\sigma_{s}}_{c}.\\
\end{equation}
$\bullet$ "The limsup is actually a sup" 
\begin{equation}
\label{prop point 2}
\sup_{\tau < 1} (1 - \tau)\,  \| NS(\varphi^{j_{0}})(\tau) \|^{\sigma_{s}}_{\dot{H}^s} \,  =\, M^{\sigma_{s}}_{c}.\\
\end{equation}
}
\end{prop}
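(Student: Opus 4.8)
The plan is to feed the bounded sequence $(v_{0,n})$ into the profile machinery of Section 3 and single out, among the constant-scale profiles, the one whose Navier--Stokes solution blows up earliest; this profile will be the sup-critical element. First I would apply Theorem \ref{theo profiles} (after the usual extraction making each ratio of scales converge to $0$, $1$ or $+\infty$) and sort the profiles as in \eqref{decomposition 1}, so that $\mathcal{J}_{1}$ collects the profiles of constant scale. Lemma \ref{lemme allure de la solution} then gives $\liminf_{n}\ts(v_{0,n})\geqslant \widetilde{T}=\inf_{j\in\mathcal{J}_{1}}\ts(\varphi^{j})$, and since $\ts(v_{0,n})=1$ for every $n$ we get $\widetilde{T}\leqslant 1$. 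Using the norm orthogonality \eqref{ortho de la norme}, the series $\sum_{j}\|\varphi^{j}\|^{2}_{\dot{H}^{s}}$ converges, hence $\|\varphi^{j}\|_{\dot{H}^{s}}\to 0$; the lifespan bound \eqref{relation Ac et ts}, namely $\ts(\varphi^{j})\geqslant c\,\|\varphi^{j}\|^{-\sigma_{s}}_{\dot{H}^{s}}$, then forces $\ts(\varphi^{j})\to+\infty$. Consequently only finitely many profiles have lifespan $\leqslant 1$, the infimum $\widetilde{T}$ is attained, say by $\varphi^{j_{0}}$ with $j_{0}\in\mathcal{J}_{1}$, and $\varphi^{j_{0}}\neq 0$ because $\ts(\varphi^{j_{0}})=\widetilde{T}<\infty$.

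The heart of the argument is a pointwise comparison coming from the Pythagorean identity \eqref{Pythagore}. For $t<\widetilde{T}$ I would drop the nonnegative heat-flow term and all profiles but $\varphi^{j_{0}}$ in \eqref{Pythagore} to get $\|NS(v_{0,n})(t)\|^{2}_{\dot{H}^{s}}\geqslant \|NS(\varphi^{j_{0}})(t)\|^{2}_{\dot{H}^{s}}+\gamma_{n}^{J}(t)$, insert the hypothesis $(1-t)\|NS(v_{0,n})(t)\|^{\sigma_{s}}_{\dot{H}^{s}}\leqslant M^{\sigma_{s}}_{c}+\varepsilon_{n}$, and let $n\to+\infty$ and then $J\to+\infty$ so that $\gamma_{n}^{J}$ and $\varepsilon_{n}$ disappear (this is where the control $\lim_{J}\limsup_{n}\sup_{t'<t}|\gamma_{n}^{J}(t')|=0$ is used). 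This yields
\begin{equation*}
(1-t)\,\|NS(\varphi^{j_{0}})(t)\|^{\sigma_{s}}_{\dot{H}^{s}}\leqslant M^{\sigma_{s}}_{c},\qquad \text{for all } t<\widetilde{T}.
\end{equation*}

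Now I would close the argument. If $\widetilde{T}<1$, then as $t\to\widetilde{T}^{-}$ the factor $(1-t)$ stays bounded below by $1-\widetilde{T}>0$ while $\|NS(\varphi^{j_{0}})(t)\|_{\dot{H}^{s}}\to+\infty$ (blow up at $\widetilde{T}$), contradicting the displayed bound; hence $\widetilde{T}=1$, i.e. $\varphi^{j_{0}}$ blows up at time $1$. The bound then holds for all $t<1$, so $\sup_{\tau<1}(1-\tau)\|NS(\varphi^{j_{0}})(\tau)\|^{\sigma_{s}}_{\dot{H}^{s}}\leqslant M^{\sigma_{s}}_{c}$, while the very definition of $M^{\sigma_{s}}_{c}$ as an infimum over finite-lifespan data gives $\limsup_{\tau\to1}(1-\tau)\|NS(\varphi^{j_{0}})(\tau)\|^{\sigma_{s}}_{\dot{H}^{s}}\geqslant M^{\sigma_{s}}_{c}$. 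Since $\limsup\leqslant\sup\leqslant M^{\sigma_{s}}_{c}\leqslant\limsup$, all three coincide with $M^{\sigma_{s}}_{c}$, which is exactly \eqref{prop point 1} and \eqref{prop point 2}.

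For uniqueness, suppose a second constant-scale profile $\varphi^{j_{1}}$ also blew up at time $1$. Keeping both profiles in \eqref{Pythagore} and repeating the passage to the limit gives $\|NS(\varphi^{j_{0}})(t)\|^{2}_{\dot{H}^{s}}+\|NS(\varphi^{j_{1}})(t)\|^{2}_{\dot{H}^{s}}\leqslant (M^{\sigma_{s}}_{c}/(1-t))^{2/\sigma_{s}}$; since $\sigma_{s}=2/(s-\tfrac12)>2$, the superadditivity $(a+b)^{\sigma_{s}/2}\geqslant a^{\sigma_{s}/2}+b^{\sigma_{s}/2}$ upgrades this to $(1-t)\|NS(\varphi^{j_{0}})(t)\|^{\sigma_{s}}_{\dot{H}^{s}}+(1-t)\|NS(\varphi^{j_{1}})(t)\|^{\sigma_{s}}_{\dot{H}^{s}}\leqslant M^{\sigma_{s}}_{c}$ for all $t<1$. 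Evaluating along a sequence $t_{k}\to1$ on which $\varphi^{j_{1}}$ realises its $\limsup$ (which is $\geqslant M^{\sigma_{s}}_{c}$) and using the universal lower bound \eqref{remark c}, namely $(1-t)\|NS(\varphi^{j_{0}})(t)\|^{\sigma_{s}}_{\dot{H}^{s}}\geqslant c>0$ valid because $\ts(\varphi^{j_{0}})=1$, the left-hand side exceeds $M^{\sigma_{s}}_{c}+c$ for $k$ large, a contradiction. Hence $\varphi^{j_{0}}$ is the unique constant-scale profile blowing up at time $1$. The step I expect to be most delicate is the interchange of the limits $n\to+\infty$ and $J\to+\infty$ against the error $\gamma_{n}^{J}$ in \eqref{Pythagore}; the superadditivity inequality (which is precisely where the supercritical range $s<\tfrac32$, i.e. $\sigma_{s}>2$, enters) together with the lower bound \eqref{remark c} are the two ingredients that drive the uniqueness.
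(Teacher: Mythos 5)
Your proof is correct and follows essentially the same route as the paper: profile decomposition, the orthogonality identity \eqref{Pythagore} of Lemma \ref{lemme allure de la solution}, the lower bound \eqref{remark c}, and the definition of $M^{\sigma_{s}}_{c}$ as an infimum over finite-lifespan data. The only deviations are cosmetic and harmless: you get $\limsup\geqslant M^{\sigma_{s}}_{c}$ directly from that infimum definition instead of by contradiction, you justify explicitly that the infimum of the lifespans over $\mathcal{J}_{1}$ is attained (which the paper glosses over), and in the uniqueness step you pass through the superadditivity of $x\mapsto x^{\sigma_{s}/2}$ where the paper simply carries out the same computation at the level of squared norms.
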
 

\begin{proof}
\noindent Let $(v_{0,n})_{n \geqslant 1}$ be a bounded sequence in $\dot{H}^s $, satisfiying the assumptions of Proposition \ref{proposition critical element }. Therefore,  $(v_{0,n})_{n \geqslant 1}$ has the profile decomposition below
\begin{equation}
 \begin{split}
 v_{0,n}(x) &= \sum_{\stackrel{j \in \mathcal{J}_{1}}{j \leqslant J}} \varphi^{j}(x-x_{n,j})
           + \sum_{\stackrel{j \in \mathcal{J}^{{}{c}}_{1}}{j \leqslant J}} \Lambda^{\frac{3}{p}}_{\lambda_{n,j},x_{n,j}}\varphi^{j}(x) + \psi_{n}^{J}(x).
 \end{split}
\end{equation}
We denote by $\ds{ \tau^{*}_{j_{0}} \eqdefa \inf_{j \in \mathcal{J}_{1}}{\ts(\varphi^{j})}}$.\\

$\bullet$ Step $1$ : we start by proving by a contradiction argument that $\tau^{*}_{j_{0}} =1$. \\ \\ We have already known by vertue of Lemma \ref{lemme allure de la solution}, that $\tau^{*}_{j_{0}} \leqslant1$. Assuming that $\tau^{*}_{j_{0}} <1$, we expect a contradiction. Moreover, orthogonal Estimate (\ref{Pythagore}) can be bounded from below by
\begin{equation}
\label{equation de reference 2}
\begin{split}
\| NS(v_{0,n})(\tau) \|^2_{\dot{H}^s} \geqslant \| NS(\varphi^{j_{0}})(\tau) \|^2_{\dot{H}^s}  - |\gamma_{n}^{J}(\tau)|.
\end{split} 
\end{equation}
\noindent On the one hand, it seems clear by assumption that for any $\tau < \tau^{*}_{j_{0}}$, we have 
$$ (1 -\tau^{*}_{j_{0}})^{\frac{2}{\sigma_{s}}} \leqslant (1-\tau)^{\frac{2}{\sigma_{s}}}.$$
On the other hand, hypothesis on $NS(v_{0,n})$ yields 
$$ (1-\tau)^{\frac{2}{\sigma_{s}}} \, \| NS(v_{0,n})(\tau) \|^{2}_{\dot{H}^s} \leqslant\,\, M^{2}_{c} + \varepsilon_{n}.$$
Therefore, from the above remarks, we get 
\begin{equation}
\| NS(v_{0,n})(\tau) \|^{2}_{\dot{H}^s} \leqslant\,\, \frac{M^{2}_{c} + \varepsilon_{n}}{ (1 -\tau^{*}_{j_{0}})^{\frac{2}{\sigma_{s}}}} \cdotp
\end{equation}
\noindent Combining the above estimate with (\ref{equation de reference 2}), we finally get, after multiplication by the factor $(\tau^{*}_{j_{0}} - \tau)^{\frac{2}{\sigma_{s}}} $,
\begin{equation}
\label{equation de reference 3}
\begin{split}
\frac{M^{2}_{c} + \varepsilon_{n}}{ (1 - \tau^{*}_{j_{0}})^{\frac{2}{\sigma_{s}}}} \,\, (\tau^{*}_{j_{0}} - \tau)^{\frac{2}{\sigma_{s}}} \geqslant (\tau^{*}_{j_{0}} - \tau)^{\frac{2}{\sigma_{s}}} \, \| NS(\varphi^{j_{0}})(\tau) \|^2_{\dot{H}^s}  - (\tau^{*}_{j_{0}} - \tau)^{\frac{2}{\sigma_{s}}}\,  |\gamma_{n}^{J}(\tau)|.
\end{split} 
\end{equation}
\noindent Notice that $(\tau^{*}_{j_{0}} - \tau)^{\frac{2}{\sigma_{s}}}$ is always less than $1$, which allows us to get rid of it in front of the remaining term $|\gamma_{n}^{J}(\tau)|$. In addition, applying (\ref{remark c}) and hypothesis on the sequence $\varepsilon_{n}$, one has
\begin{equation*}
\begin{split}
\frac{M^{2}_{c} + \varepsilon_{0}}{ (1 -\tau^{*}_{j_{0}})^{\frac{2}{\sigma_{s}}}} \,\, (\tau^{*}_{j_{0}} - \tau)^{\frac{2}{\sigma_{s}}} \geqslant c\, - \, |\gamma_{n}^{J}(\tau)|.
\end{split} 
\end{equation*}
\noindent We first choose $\tau = \tau_{c}$ such that $\tau_{c} < \tau^{*}_{j_{0}}$ and $\ds{ \frac{M^{2}_{c} + \varepsilon_{0}}{ (1 -\tau^{*}_{j_{0}})^{\frac{2}{\sigma_{s}}}} \,\, (\tau^{*}_{j_{0}} - \tau_{c})^{\frac{2}{\sigma_{s}}} = \frac{c}{4}}\cdotp$ Then, we take $J$ and $n$ large enough such that $\ds{ |\gamma_{n}^{J}(\tau_{c})| \leqslant \frac{c}{2}}\cdotp$ Therefore,  we get a contradiction, which proves that $\tau^{*}_{j_{0}} = 1$.

$\bullet$ Step $2$ : we prove here that $ NS(\varphi^{j_{0}})$ is a sup-critical solution in $\dot{H}^s$. \\ \\ 
\noindent Let us come back to Inequality (\ref{equation de reference 2}), which we multiply by the factor $(1 -\tau)^{\frac{2}{\sigma_{s}}}$. As we have shown that $\tau^{*}_{j_{0}} = 1$, hypothesis on $NS(v_{0,n})$ implies that for any $\tau < 1$,
\begin{equation}
M^{2}_{c} + \varepsilon_{n}  \geqslant (1 - \tau)^{\frac{2}{\sigma_{s}}} \, \| NS(\varphi^{j_{0}})(\tau) \|^2_{\dot{H}^s}\,  -\,   |\gamma_{n}^{J}(\tau)|.
\end{equation}
\noindent Our aim is to prove that the particular profile $\varphi^{j_{0}}$ generates a sup-critical solution. If not, it means that 
$$ \exists \alpha_{0} >0, \forall \varepsilon >0, \,\, \exists \tau_{\varepsilon},\,\, \hbox{such that} \,\,0<  ( 1 -\tau_{\varepsilon} )^{\frac{2}{\sigma_{s}}}\, < \varepsilon \quad  \hbox{and} \quad  (1 -\tau_{\varepsilon} )^{\frac{2}{\sigma_{s}}}\,  \| NS(u_{0,n})(\tau_{\varepsilon}) \|^{2}_{\dot{H}^s} \geqslant M_{c}^2 + \alpha_{0}.$$
\noindent Taking the above inequality at time $\tau_{\varepsilon}$, one has
\begin{equation*}
M^{2}_{c} + \varepsilon_{n}  \geqslant\,  M_{c}^2 + \alpha_{0}  -\,   |\gamma_{n}^{J}(\tau_{\varepsilon})|.
\end{equation*}

\noindent Moreover, assumption on the remaining term $\gamma_{n}^{J}$ implies that
$$\forall \eta >0,\,\, \exists \widetilde{J}(\eta) \in \N, \,\, \exists N_{\eta} \in \N\,\, \hbox{such that}\,\, \forall J \geq \widetilde{J}(\eta),\,\, \forall n \geqslant N_{\eta},\,\,  |\gamma_{n}^{J}(\tau_{\varepsilon})|\leqslant \eta.$$

\noindent Let $\eta >0$. For any $J \geq \widetilde{J}(\eta)$ and for any $n \geqslant N_{\eta}\,\, $, we get at time $\tau_{\varepsilon}$,
\begin{equation*}
M^{2}_{c}  \geqslant  M_{c}^2 + \alpha_{0} \, - \, \eta.
\end{equation*}
\noindent Now, choosing $\eta$ small enough (namely $\ds{\eta = \frac{\alpha_{0}}{2}}$) we get a contradiction which proves that  $NS(\varphi^{j_{0}})$ is a sup-critical solution. This concludes the proof of step $2$ and thus the point (\ref{prop point 1}) is proved. \\

\medskip
$\bullet$ Step $3$ : let us prove the point (\ref{prop point 2}) of Proposition \ref{proposition critical element }. The proof is a straightforward adaptation of the previous one. We shall use that $NS(\varphi^{j_{0}})$ is a sup-critical solution: 
$$ \limsup_{\tau \to 1} (1 - \tau)\,  \| NS(\varphi^{j_{0}})(\tau) \|^{\sigma_{s}}_{\dot{H}^s} =\, M^{\sigma_{s}}_{c}.$$
\noindent As we always have 
$\ds{\sup_{\tau < 1}\, (1 - \tau)\,  \| NS(\varphi^{j_{0}})(\tau) \|^{\sigma_{s}}_{\dot{H}^s}   \geqslant \limsup_{\tau \to 1} (1 - \tau)\,  \| NS(\varphi^{j_{0}})(\tau) \|^{\sigma_{s}}_{\dot{H}^s}}$,
we get a first inequality : $\ds{\sup_{\tau < 1}\, (1 - \tau)\,  \| NS(\varphi^{j_{0}})(\tau) \|^{\sigma_{s}}_{\dot{H}^s}   \geqslant M^{\sigma_{s}}_{c} }$. \\
\noindent According to the previous computations, we have, for any $\tau < 1$,
\begin{equation*}
M^{2}_{c} + \varepsilon_{n}  \geqslant (1 - \tau)^{\frac{2}{\sigma_{s}}} \, \| NS(\varphi^{j_{0}})(\tau) \|^2_{\dot{H}^s}\,  -\,   |\gamma_{n}^{J}(\tau)|.
\end{equation*}
\noindent Hypothesis on the remaining term $|\gamma_{n}^{J}|$ implies that $\ds{\sup_{\tau < 1}\, (1 - \tau)\,  \| NS(\varphi^{j_{0}})(\tau) \|^{\sigma_{s}}_{\dot{H}^s}   \leqslant M^{\sigma_{s}}_{c} },$
which provides the second desired inequality. This ends up the proof of (\ref{prop point 2}).\\

\medskip
\noindent Let us recall some notation and add a few words about profiles with constant scale. Thanks to Lemma~\ref{lemme allure de la solution} and obvious boundaries from below we get for any $\ds{\tau < \ds{ \tau^{*}_{j_{0}} \eqdefa \inf_{j \in \mathcal{J}_{1}}{\ts(\varphi^{j})}} =1 }$
\begin{equation}
\begin{split}
\| NS(v_{0,n})(\tau) \|^2_{\dot{H}^s} \geqslant \sum_{j \in \mathcal{J}_{1}} \| NS(\varphi^{j})(\tau) \|^{2}_{\dot{H}^s} - |\gamma_{n}^{J}(\tau)|.\\ 
\end{split} 
\end{equation}
\noindent Among profiles with a scale equal to $1$ (e.g $j \in \mathcal{J}_{1}$), we distinguish profiles with a lifespan equal to~$\tau^{*}_{j_{0}} =1$ and profiles with a lifespan~$\tau^{*}_{j}$ strictly greater than $1$. In other words, we consider the set $$\tilde{\mathcal{J}_{1}} \eqdefa \{ j \in \mathcal{J}_{1} \, \, | \,\, \tau^{*}_{j} = 1 \}.$$
Therefore, for any $\tau < 1$,
\begin{equation*}
\begin{split}
\| NS(v_{0,n})(\tau) \|^2_{\dot{H}^s} &\geqslant \| NS(\varphi^{j_{0}})(\tau) \|^2_{\dot{H}^s} + \sum_{j \in \tilde{\mathcal{J}_{1}}, \, j \neq j_{0}} \, \| NS(\varphi^{j})(\tau) \|^{2}_{\dot{H}^s}\\  &+ \sum_{j \in \mathcal{J}_{1} \setminus \tilde{\mathcal{J}_{1}}} \,\| NS(\varphi^{j})(\tau) \|^{2}_{\dot{H}^s}\, - |\gamma_{n}^{J}(\tau)|, 
\end{split} 
\end{equation*}
\noindent which be bounded from below once again by
\begin{equation}
\label{equation de reference}
\begin{split}
\| NS(v_{0,n})(\tau) \|^2_{\dot{H}^s} \geqslant \| NS(\varphi^{j_{0}})(\tau) \|^2_{\dot{H}^s} + \sum_{j \in \tilde{\mathcal{J}_{1}}, \, j \neq j_{0}} \, \| NS(\varphi^{j})(\tau) \|^{2}_{\dot{H}^s}\, - |\gamma_{n}^{J}(\tau)|,
\end{split} 
\end{equation}
\noindent since obviously the term $\ds{\sum_{j \in \mathcal{J}_{1} \setminus \tilde{\mathcal{J}_{1}}} \,\| NS(\varphi^{j})(\tau) \|^{2}_{\dot{H}^s}\,}$ is positive.\\ \\

$\bullet$ Step $4$ : in order to complete the proof of Lemma \ref{general lemma for critical element }, we have to prove that there exists a unique profile with a lifespan~$\tau^{*}_{j_{0}} =1$, namely~$|\tilde{\mathcal{J}_{1}} | = 1$. Once again, we assume that there exists at least two profiles in $\tilde{\mathcal{J}_{1}}$. We expect a contraction. Arguments of the proof are similar to the ones used in the step~$2$. We shall use the fact~$\ds{(1-\tau)^{\frac{2}{\sigma_{s}}}\, \| NS(\varphi^{j})(\tau) \|^{2}_{\dot{H}^s}}$ can not be small as we want, by vertue of (\ref{remark c}). Indeed, let us come back to Inequality (\ref{equation de reference}). We have already proved that~$ \varphi^{j_{0}}$ generates a sup-critical solution, blowing up at time $1$. It means that for any~$\varepsilon >0$, there exists a time~$\tau_{\varepsilon}$ such that 
$$ 0<  ( 1 -\tau_{\varepsilon} )^{\frac{2}{\sigma_{s}}}\, < \varepsilon \quad  \hbox{and} \quad M_{c}^2 - \varepsilon   \leqslant(1 -\tau_{\varepsilon} )^{\frac{2}{\sigma_{s}}}\,  \| NS(\varphi^{j_{0}})(\tau_{\varepsilon}) \|^{2}_{\dot{H}^s} \leqslant M_{c}^2 + \varepsilon.$$

\noindent Therefore, Inequality (\ref{equation de reference}) becomes  at time $\tau_{\varepsilon} $
\begin{equation}
M^2_{c} + \varepsilon_{n}  \geqslant  M_{c}^2 - \varepsilon \, +\, \sum_{j \in \tilde{\mathcal{J}_{1}}, j \neq j_{0}} (1-\tau_{\varepsilon})^{\frac{2}{\sigma_{s}}}\, \| NS(\varphi^{j})(\tau_{\varepsilon}) \|^{2}_{\dot{H}^s} \, -\,   |\gamma_{n}^{J}(\tau_{\varepsilon})|.
\end{equation}
 
\noindent By vertue of (\ref{remark c}), there exists a universal constant $c >0$ such that for any $j \in \tilde{\mathcal{J}_{1}}$ and $j \neq j_{0}$
\begin{equation}
(1-\tau)^{\frac{2}{\sigma_{s}}}\, \| NS(\varphi^{j})(\tau) \|^{2}_{\dot{H}^s} \geqslant c^2. 
\end{equation}
\noindent As a result, taking the limit for $n$ and $J$ large enough, we infer that (still under the hypothesis ~$\ds{|\tilde{\mathcal{J}_{1}}| > 1}$)
\begin{equation}
M^{2}_{c}  \geqslant M_{c}^2 - \varepsilon \, +  (|\tilde{\mathcal{J}_{1}}|-1)\,c^2 - \eta.
\end{equation} 
\noindent Choosing $\varepsilon$ small enough, we get a contradiction and as a consequence, $\ds{|\tilde{\mathcal{J}_{1}}| = 1}$. It means there exists a unique profile generating a sub-critical solution, blowing up at time $1$. This completes the proof of Proposition \ref{proposition critical element }, and thus the proof of Lemma ~\ref{general lemma for critical element }.
\end{proof}

\medskip

\section{Fluctuation estimates in Besov spaces}

\vskip 0.3cm
\noindent This section is devoted to the proof of Lemma \ref{fluctuation lemma}. We shall prove some estimates on the fluctuation part which is given by the bilinear form $$B(u,u)(t) \eqdefa NS(u_{0})(t) - e^{t\Delta}u_{0} = u - e^{t\Delta}u_{0}.$$  We distinguish the case $\dot{B}^{\frac{1}{2}}_{2,\infty}$ from the case $\dot{B}^{s'}_{2,\infty}$, even if proves ideas are similar : we cut-off according low and high frequencies in the following sense : 
$$(\ts-t)2^{2j} \leqslant 1 \quad  \hbox{and} \quad  (\ts-t)2^{2j} \geqslant 1.$$ 
\noindent Concerning high frequencies, we shall use the regularization effet of the Laplacian. Let us start by proving the critical part of Lemma \ref{fluctuation lemma}. 
\medbreak
\begin{lemma}
\label{fluctuation 1/2}\sl{
Let $\ds{\frac{1}{2} < s< \frac{3}{2}}$ and $u_{0} \in \dot{H}^s $. It exists a positive constant $C_{s}$ such that
$$\hbox{If} \quad \ts(u_{0}) < \infty \quad \hbox{and} \quad   M_{u} \eqdefa (\ts(u_{0}) - t)\,  \| NS(u_{0})(t) \|^{\sigma_{s}}_{\dot{H}^s} < \infty,$$ 
\noindent then, we have 
  $$ \| u - e^{t\Delta}u_{0} \|_{\dot{B}^{\frac{1}{2}}_{2,\infty}} < C_{s}\, M^2_{u}.$$
}\end{lemma}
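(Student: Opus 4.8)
The plan is to start from the Duhamel (mild) formulation of the fluctuation. Writing $\mathbb{P}$ for the Leray projector, one has
$$ u - e^{t\Delta}u_{0} = -\int_{0}^{t} e^{(t-\tau)\Delta}\,\mathbb{P}\,\dive(u\otimes u)(\tau)\,d\tau. $$
Since both $\|\cdot\|_{\dot{B}^{\frac12}_{2,\infty}}$ and the quantity $(\ts(u_{0})-t)\,\|u(t)\|^{\sigma_{s}}_{\dot{H}^s}$ are scaling invariant, while $B(u,u)$ transforms like $u$, I would first normalise $\ts(u_{0})=1$. Recalling that $\|\cdot\|_{\dot{B}^{\frac12}_{2,\infty}}=\sup_{j}2^{j/2}\|\dot{\Delta}_{j}(\cdot)\|_{L^2}$, I localise in frequency, using that $\mathbb{P}$ is an $L^2$-bounded multiplier commuting with $\dot{\Delta}_{j}$, that $\dive$ costs a factor $2^{j}$ (Bernstein), and that the heat semigroup contracts a dyadic block as $\|e^{(t-\tau)\Delta}\dot{\Delta}_{j}f\|_{L^2}\lesssim e^{-c\,2^{2j}(t-\tau)}\|\dot{\Delta}_{j}f\|_{L^2}$. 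This yields
$$ 2^{j/2}\|\dot{\Delta}_{j}(u-e^{t\Delta}u_{0})(t)\|_{L^2} \lesssim 2^{3j/2}\int_{0}^{t} e^{-c\,2^{2j}(t-\tau)}\,\|\dot{\Delta}_{j}(u\otimes u)(\tau)\|_{L^2}\,d\tau. $$

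Next I invoke the three-dimensional product law $\|u\otimes u\|_{\dot{H}^{2s-3/2}}\lesssim\|u\|^2_{\dot{H}^s}$ (valid for $\frac12<s<\frac32$), in its weaker Besov form $\|\dot{\Delta}_{j}(u\otimes u)(\tau)\|_{L^2}\lesssim 2^{-(2s-3/2)j}\|u(\tau)\|^2_{\dot{H}^s}$, together with the hypothesis in the form $\|u(\tau)\|_{\dot{H}^s}\le M_{u}\,(1-\tau)^{-1/\sigma_{s}}$, that is $\|u(\tau)\|^2_{\dot{H}^s}\le M_{u}^2\,(1-\tau)^{-(s-1/2)}$. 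After simplifying the frequency exponent $\tfrac32 j-(2s-\tfrac32)j=(3-2s)j$, the whole statement reduces to proving
$$ \sup_{j}\; 2^{(3-2s)j}\int_{0}^{t} e^{-c\,2^{2j}(t-\tau)}\,(1-\tau)^{-(s-1/2)}\,d\tau\;\lesssim\;1 \qquad \text{uniformly in } t<1. $$

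This is where the announced splitting according to $(1-t)2^{2j}\ge1$ and $(1-t)2^{2j}\le1$ enters. Setting $\delta=1-t$, $b=s-\tfrac12\in(0,1)$ and changing variables $r=t-\tau$, the inner integral is controlled by $\int_{0}^{\infty}e^{-c2^{2j}r}(\delta+r)^{-b}\,dr$. For high frequencies ($2^{-2j}\le\delta$) I bound $(\delta+r)^{-b}\le\delta^{-b}$ and integrate the exponential, getting $\lesssim 2^{-2j}\delta^{-b}$, so the prefactor gives $2^{(1-2s)j}\delta^{-b}$, which at the extreme frequency $2^{2j}\sim\delta^{-1}$ equals exactly $\delta^{0}=1$. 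For low frequencies I split the $r$-integral at $r=\delta$ and at $r=2^{-2j}$, obtaining $\lesssim\delta^{1-b}+2^{2j(b-1)}$; multiplying by $2^{(3-2s)j}$, both contributions collapse again to $\lesssim1$. Everything closes precisely because of the identity $\tfrac{3-2s}{2}+\bigl(s-\tfrac12\bigr)=1$, which is nothing but the criticality (scaling) of $\dot{B}^{\frac12}_{2,\infty}$. I expect the main obstacle to be exactly this endpoint issue tied to the $\infty$-summability in the Besov norm: a cruder route, namely placing $u\otimes u$ into a single space and applying the heat smoothing as one scalar time integral, produces $\int_{0}^{t}(t-\tau)^{-(3-2s)/2}(1-\tau)^{-(s-1/2)}d\tau$, which diverges logarithmically as $t\to1$. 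Performing the time integral sharply at each dyadic frequency \emph{before} taking the supremum is exactly what removes the logarithm. Undoing the normalisation then delivers the claimed bound $\|u-e^{t\Delta}u_{0}\|_{\dot{B}^{\frac12}_{2,\infty}}\le C_{s}\,M_{u}^2$.
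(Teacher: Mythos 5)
Your argument is correct and follows essentially the same route as the paper: Duhamel formula, dyadic heat-kernel decay $\|\dot\Delta_j e^{t\Delta}a\|_{L^2}\lesssim e^{-ct2^{2j}}\|\dot\Delta_j a\|_{L^2}$, the product law $\|u\otimes u\|_{\dot B^{2s-3/2}_{2,\infty}}\lesssim\|u\|^2_{\dot H^s}$, the hypothesis in the form $\|u(t')\|^2_{\dot H^s}\le M_u^2(\ts-t')^{-(s-1/2)}$, and a low/high frequency splitting relative to $(\ts-\cdot)2^{2j}=1$ with the time integral evaluated sharply at each fixed $j$ before taking the supremum, closed by the exponent identity $(3-2s)+2(s-\frac12)-2=0$. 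The only (immaterial) difference is that you organize the splitting by comparing $2^{2j}$ to $(1-t)^{-1}$ and then subdivide the time integral, whereas the paper inserts the indicators $1_{\{(\ts-t')2^{2j}\lessgtr 1\}}$ directly inside the time integral; your side remark that the cruder scalar-in-time estimate loses a logarithm is accurate and explains why the dyadic-by-dyadic computation is needed.
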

\begin{proof}
Duhamel formula gives
\begin{equation}
 u - e^{t\Delta}u_{0} \eqdefa B(u,u) = - \int_{0}^{t} e^{(t-t')\Delta}\, \P(\dive(u \otimes u)\, dt'.
\end{equation}
By vertue of classsical estimates on the heat term (see for instance Lemma $2.4$ in \cite{BCD}),  we have 
\begin{equation}
 \| \Delta_{j}e^{t\Delta}\, a \|_{L^2} \leqslant C\, e^{-ct\, 2^{2j}}\, \| \Delta_{j} a \|_{L^2}.
\end{equation}
Therefore, the fluctuation part becomes
\begin{equation}
 \begin{split}
   \| \Delta_{j} B(u,u)(t) \|_{L^2} 
   &\lesssim \int_{0}^{t}\,  e^{-c(t-t')\, 2^{2j}}\, 2^{j}\,  \| \Delta_{j}  (u \otimes u)(t') \|_{L^2}\, dt'\\
   &\lesssim \int_{0}^{t}\,   e^{-c(t-t')\, 2^{2j}}\, 2^{j}\, 2^{-j(2s-\frac{3}{2})}\,  \| u \otimes u(t') \|_{\dot{B}^{2s-\frac{3}{2}}_{2,\infty}}\, dt'.
 \end{split}
\end{equation}
We infer thus, thanks to the product laws in Sobolev spaces
\begin{equation}
 \begin{split}
  2^{\frac{j}{2}}\,   \| \Delta_{j} B(u,u) (t)\|_{L^2} &\lesssim   \int_{0}^{t}\,   e^{-c(t-t')\, 2^{2j}}\, 2^{j(3-2s)}\, \| u(t')\|^{2}_{\dot{H}^s}\, dt'.
 \end{split}
\end{equation}
\noindent By hypothesis, we have supposed that 
$$  M^2_{u} \eqdefa  (\ts(u_{0}) - t)\,\| NS(u_{0})(t) \|^{\sigma_{s}}_{\dot{H}^s} < \infty .$$
As a result, 
\begin{equation}
 \begin{split}
  2^{\frac{j}{2}}\,   \| \Delta_{j} B(u,u)(t) \|_{L^2} &\leqslant C_{s}\,   \int_{0}^{t}\,  e^{-c(t-t')\, 2^{2j}}\, 2^{j(3-2s)}\, \frac{M^2_{u}}{ (\ts(u_{0})-t')^{\frac{2}{\sigma_{s}}}}\\
  &= \int_{0}^{t}\, 1_{\{(\ts(u_{0})-t')2^{2j} \leqslant 1\}}\,  e^{-c(t-t')\, 2^{2j}}\, 2^{j(3-2s)}\, \frac{M^2_{u}}{ (\ts(u_{0})-t')^{\frac{2}{\sigma_{s}}}} \, dt'\\ &\qquad + \, \int_{0}^{t}\, 1_{\{(\ts(u_{0})-t')2^{2j} \geqslant 1\}}\,  e^{-c(t-t')\, 2^{2j}}\, 2^{j(3-2s)}\, \frac{M^2_{u}}{ (\ts(u_{0})-t')^{\frac{2}{\sigma_{s}}}}\, dt'\cdotp
 \end{split}
\end{equation}
We apply Young inequality : in the first integral, we consider $L^{\infty} \star L^{1}$,  whereas in the second one, we consider $L^{1} \star L^{\infty}$ in order to use the regularization effect of the Laplacian.
\begin{equation}
 \begin{split}
  2^{\frac{j}{2}}\,   \| \Delta_{j} B(u,u) (t)\|_{L^2} &\leqslant C_{s}\, M^2_{u} \, \int_{\ts(u_{0})-2^{-2j}}^{\ts(u_{0})} \frac{2^{j(3-2s)}\,dt'}{ (\ts(u_{0})-t')^{\frac{2}{\sigma_{s}}}}  +  C_{s}\,  M^2_{u} \, \int_{0}^{t}\,   e^{-c(t-t')\, 2^{2j}}\, 2^{j(3-2s)}\, 2^{2j(s-\frac{1}{2})} \, dt'.
 \end{split}
\end{equation}
We recall that $\ds{\frac{2}{\sigma_{s}}\eqdefa s-\frac{1}{2}}$ and $\ds{s-\frac{1}{2} < 1}$. As a result, 
\begin{equation}
 \begin{split}
  2^{\frac{j}{2}}\,   \| \Delta_{j} B(u,u)(t) \|_{L^2} &\leqslant\, C_{s}\, M^2_{u} \, \Bigl(2^{j(2s-3)}\,   2^{j(3-2s)}\,  \,  + \, \frac{1}{2^{2j}}\, 2^{j(3-2s)}\, 2^{2j(s-\frac{1}{2})}  \Bigr) 
 \lesssim C_{s}\, M^2_{u}. 
 \end{split}
\end{equation}
This concludes the proof on the fluctuation estimate in the critical case. 
\end{proof}

\vskip 0.5cm
\noindent The statement given below is a bit more general than the one of Lemma \ref{fluctuation lemma}, which we deduce immediately by an interpoaltion argument (the same as given at the end of the proof of Theorem \ref{Big key theorem}). \\
\begin{lemma}
\label{fluctuation s'}\sl{ Let $\ds{\frac{1}{2} < s< \frac{3}{2}}$ and $u_{0} \in \dot{H}^s $. It exists a positive constant $C_{s}$ such that
$$\hbox{If} \quad \ts(u_{0}) < \infty \quad \hbox{and} \quad   M_{u} \eqdefa (\ts(u_{0}) - t)\,  \| NS(u_{0})(t) \|^{\sigma_{s}}_{\dot{H}^s} < \infty,$$ 
\noindent then, we have for any $\ds{s < s'< 2s-\frac{1}{2}}$
$$ (\ts(u_{0})-t)^{\frac{1}{2}(s'-\frac{1}{2})}\,\, \| u(t) - e^{t\Delta}u_{0} \|_{\dot{B}^{s'}_{2,\infty}} < \infty.$$}
\end{lemma}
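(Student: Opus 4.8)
The plan is to follow the proof of Lemma~\ref{fluctuation 1/2} verbatim up to the product estimate, changing only the Littlewood--Paley weight from $2^{j/2}$ to $2^{js'}$ and keeping track of the power of $\ts(u_0)-t$ that the non-critical index forces in front of the norm. Starting from Duhamel's formula $B(u,u)(t)=-\int_0^t e^{(t-t')\Delta}\P(\dive(u\otimes u))\,dt'$, the heat estimate $\|\Delta_{j}e^{(t-t')\Delta}a\|_{L^2}\lesssim e^{-c(t-t')2^{2j}}\|\Delta_{j}a\|_{L^2}$ together with the product law $\|\Delta_{j}(u\otimes u)(t')\|_{L^2}\lesssim 2^{-j(2s-\frac32)}\|u(t')\|^2_{\dot{H}^s}$ already used there yields
$$2^{js'}\,\|\Delta_{j}B(u,u)(t)\|_{L^2}\lesssim \int_0^t e^{-c(t-t')2^{2j}}\,2^{j(s'-2s+\frac52)}\,\|u(t')\|^2_{\dot{H}^s}\,dt'.$$
Inserting the bound $\|u(t')\|^2_{\dot{H}^s}\lesssim M_u^2\,(\ts(u_0)-t')^{-(s-\frac12)}$ (recall $\tfrac{2}{\sigma_s}=s-\tfrac12$), the entire task reduces to estimating this time integral uniformly in $j$ and proving $2^{js'}\|\Delta_{j}B(u,u)(t)\|_{L^2}\lesssim M_u^2\,(\ts(u_0)-t)^{-\frac12(s'-\frac12)}$; taking the supremum over $j$ and multiplying by $(\ts(u_0)-t)^{\frac12(s'-\frac12)}$ then closes the lemma.

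For each $j$ I would split the integral according to $(\ts(u_0)-t')2^{2j}\le 1$ or $\ge1$, as announced at the start of this section. On the region $\{(\ts-t')2^{2j}\le1\}$ I bound the exponential by $1$; the weight $(\ts-t')^{-(s-\frac12)}$ is integrable there precisely because $s<\frac32$, and integrating it over a window of length $\sim 2^{-2j}$ produces a factor $2^{-2j(\frac32-s)}$, so the powers of $2^j$ collapse to $2^{j(s'-\frac12)}$. On the region $\{(\ts-t')2^{2j}\ge1\}$ I instead exploit the smoothing $\int_0^t e^{-c(t-t')2^{2j}}\,dt'\lesssim 2^{-2j}$; here the decisive choice is to freeze the weight as $(\ts-t')^{-(s-\frac12)}\le(\ts-t)^{-(s-\frac12)}$ (valid since $t'\le t$), which leaves the power $2^{j(s'-2s+\frac12)}$, an exponent that is negative \emph{exactly} when $s'<2s-\frac12$.

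Combining the two regions, the extremal frequencies are those with $2^{2j}\sim(\ts(u_0)-t)^{-1}$. On the low-frequency side $2^{j(s'-\frac12)}\le(\ts-t)^{-\frac12(s'-\frac12)}$ since $s'>\frac12$; on the high-frequency side the negative exponent together with the frozen weight gives, after the elementary identity $-\tfrac12(s'-2s+\tfrac12)-(s-\tfrac12)=-\tfrac12(s'-\tfrac12)$, again the bound $(\ts-t)^{-\frac12(s'-\frac12)}$. This establishes the uniform bound on $2^{js'}\|\Delta_{j}B(u,u)(t)\|_{L^2}$ and hence the lemma. I expect the only real subtlety to be in the high-frequency regime: one must resist the tempting region-adapted bound $(\ts-t')^{-(s-\frac12)}\le 2^{2j(s-\frac12)}$, which reproduces $2^{j(s'-\frac12)}$ but carries no decay in $\ts-t$ and therefore cannot be summed against high frequencies; keeping the weight frozen at $(\ts-t)$ is what lets the constraint $s'<2s-\frac12$ convert the supremum over high frequencies into the correct power of $\ts(u_0)-t$. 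Lemma~\ref{fluctuation lemma} in $\dot{H}^{s'}$ then follows by interpolating this $\dot{B}^{s'}_{2,\infty}$ estimate between two nearby indices through Proposition~\ref{interpolation}, exactly as at the end of the proof of Theorem~\ref{Big key theorem}.
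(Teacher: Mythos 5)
Your setup is the paper's: Duhamel, the heat-kernel decay, and the product law give $2^{js'}\|\Delta_j B(u,u)(t)\|_{L^2}\lesssim\int_0^t e^{-c(t-t')2^{2j}}\,2^{j(\frac52-2s+s')}(\ts-t')^{-(s-\frac12)}\,dt'$, and your treatment of region A, as well as of region B when $(\ts-t)2^{2j}\geq 1$, is correct. The gap is in the combining step. For a fixed $j$, region B, namely $\{t'\in[0,t]\,:\,(\ts-t')2^{2j}\geq 1\}$, is nonempty as soon as $2^{2j}\geq \ts^{-1}$, hence in particular for the \emph{low} frequencies $\ts^{-1}\leq 2^{2j}<(\ts-t)^{-1}$, for which region A is nonempty too; your analysis implicitly assumes low frequencies only see region A. On that range your frozen-weight estimate for region B gives $2^{ja}(\ts-t)^{-(s-\frac12)}$ with $a\eqdefa s'-2s+\frac12<0$, and the ratio of this to the target $(\ts-t)^{-\frac12(s'-\frac12)}$ equals $\bigl(2^{2j}(\ts-t)\bigr)^{a/2}$, which is $\geq 1$ there and grows like $\bigl((\ts-t)/\ts\bigr)^{a/2}\to+\infty$ as $t\to\ts$. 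So for those frequencies your argument only yields the strictly weaker rate $(\ts-t)^{-(s-\frac12)}$ (note $s-\frac12>\frac12(s'-\frac12)$ precisely because $s'<2s-\frac12$), not the lemma. The culprit is your parenthetical warning: the "region-adapted bound" $(\ts-t')^{-(s-\frac12)}\leq 2^{2j(s-\frac12)}$ is indeed useless at high frequencies, but it is exactly what closes region B at low frequencies --- combined with $\int_0^t e^{-c(t-t')2^{2j}}dt'\lesssim 2^{-2j}$ it gives $2^{j(s'-\frac12)}\leq(\ts-t)^{-\frac12(s'-\frac12)}$, and it is what the paper itself uses on that region in the critical case (Lemma \ref{fluctuation 1/2}). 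The correct rule is that the choice of bound on the weight must depend on whether $(\ts-t)2^{2j}$ is large or small, not only on the $t'$-region.

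For comparison, the paper's own proof sidesteps this by splitting in frequency rather than in time at low frequencies: when $(\ts-t)2^{2j}\leq 1$ it writes $2^{js'}\|\Delta_j B(u,u)(t)\|_{L^2}\leq 2^{j(s'-s)}\|B(u,u)(t)\|_{\dot B^{s}_{2,\infty}}$, invokes the a priori bound $\|B(u,u)(t)\|_{\dot B^{s}_{2,\infty}}\lesssim(\ts-t)^{-\frac12(s-\frac12)}$, and uses $2^{j(s'-s)}\leq(\ts-t)^{-\frac12(s'-s)}$; the high-frequency part is handled exactly as you do. Either repair works and the overall strategy is sound, but as written your proof does not control the low-frequency contribution of region B.
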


\begin{proof}
\noindent Same arguments as above yield
\begin{equation}
 \begin{split}
   \| \Delta_{j} B(u,u)(t) \|_{L^2} &\lesssim \int_{0}^{t}\,   e^{-c(t-t')\, 2^{2j}}\, 2^{j}\, 2^{-j(2s-\frac{3}{2})}\,  \| u \otimes u(t') \|_{\dot{B}^{2s-\frac{3}{2}}_{2,\infty}}\, dt'.
 \end{split}
\end{equation}
Product laws in Sobolev spaces and hypothesis on $u$ imply
\begin{equation}
 \begin{split}
  2^{js'}\,   \| \Delta_{j} B(u,u)(t) \|_{L^2} &\lesssim   \int_{0}^{t}\,   e^{-c(t-t')\, 2^{2j}}\,  2^{j(\frac{5}{2}-2s+s')}\, \| u(t')\|^{2}_{\dot{H}^s}\, dt'\\
  &\lesssim \int_{0}^{t}\,   e^{-c(t-t')\, 2^{2j}}\,  2^{j(\frac{5}{2}-2s+s')}\, \frac{C}{(\ts(u_{0})-t')^{s-\frac{1}{2}}}\cdotp
\end{split}
\end{equation}
We split (the same cut off as before) according low and high frequencies. Concerning high frequencies, since $\ts(u_{0}) -t \leqslant \ts(u_{0})-t'$, we get 
\begin{equation}
 \begin{split}
  2^{js'}\,   \| \Delta_{j} B(u,u)(t)\, 1_{\{(\ts-t)2^{2j} \geqslant 1\}} \|_{L^2}\, &\lesssim   \int_{0}^{t}\,  e^{-c(t-t')\, 2^{2j}}\, 2^{j(\frac{5}{2}-2s+s')}\, \frac{C}{ (\ts(u_{0})-t)^{s-\frac{1}{2}}}\, dt'\\
  & \lesssim \,  2^{j(\frac{1}{2}-2s+s')}\, \frac{C}{ (\ts(u_{0})-t)^{s-\frac{1}{2}}}\cdotp
 \end{split}
\end{equation}
\noindent Choosing $s'$ such that $\ds{\frac{1}{2}-2s+s' < 0}$, we get 
\begin{equation*}
 \begin{split}
  2^{js'}\,   \| \Delta_{j} B(u,u)(t) \,   1_{\{(\ts-t)2^{2j} \geqslant 1\}} \|_{L^2}\,  & \lesssim \, C\, \frac{ (\ts(u_{0}) - t)^{\frac{1}{2}(-\frac{1}{2}+2s-s')}}{ (\ts(u_{0})-t)^{s-\frac{1}{2}}} \,= C\,\, (\ts(u_{0}) - t)^{-\frac{1}{2}(s'-\frac{1}{2})},
 \end{split}
\end{equation*}
 \noindent which yields the desired estimate, as far as high frequencies are concerned.\\
\noindent Concerning low frequencies, let us come back to the very beginning. 
\begin{equation}
 \begin{split}
  2^{js'}\,   \| \Delta_{j} B(u,u)(t) \, 1_{\{(\ts(u_{0})-t)2^{2j} \leqslant 1\}} \|_{L^2}\, & \lesssim \, 2^{j(s'-s)}\,  2^{js}\, \| \Delta_{j} B(u,u) \|_{L^2}\\
  &\lesssim 2^{j(s'-s)}\, \| u(t) - e^{t\Delta}u_{0} \|_{\dot{B}^{s}_{2,\infty}}.
 \end{split}
\end{equation}
\noindent As $\ds{ \| u(t) - e^{t\Delta}u_{0} \|_{\dot{B}^{s}_{2,\infty}} \leqslant \frac{C}{(\ts(u_{0}) - t)^{\frac{1}{2}(s-\frac{1}{2})}}}$, we infer that 
\begin{equation*}
 \begin{split}
  2^{js'}\,   \| \Delta_{j} B(u,u)(t) \, 1_{\{(\ts(u_{0})-t)2^{2j} \leqslant 1\}}\, \|_{L^2}\, &\lesssim 2^{j(s'-s)}\, \frac{C}{(\ts(u_{0}) - t)^{\frac{1}{2}(s-\frac{1}{2})}}\cdotp
 \end{split}
\end{equation*}
\noindent Hypothesis of low frequencies implies
\begin{equation*}
 \begin{split}
  2^{js'}\,   \| \Delta_{j} B(u,u)(t) \, 1_{\{(\ts(u_{0})-t)2^{2j} \leqslant 1\}}\, \|_{L^2}\, &\lesssim \, \frac{C}{(\ts(u_{0}) - t)^{\frac{1}{2}(s-\frac{1}{2}) + \frac{1}{2}(s'-s)} }\,  = \, \frac{C}{(\ts(u_{0}) - t)^{\frac{1}{2}(s'-\frac{1}{2})}}\cdotp
 \end{split}
\end{equation*}
\noindent which ends up the proof for low frequency part.
\noindent The proof of Lemma \ref{fluctuation s'} is thus complete. 
\end{proof}

\medbreak
\medskip

\section{Existence of sup-critical solutions bounded in $\dot{B}^{\frac{1}{2}}_{2,\infty}$}
\noindent This section is devoted to complete the proof of Theorem \ref{Big key theorem}, namely the part concerning the $\dot{B}^{\frac{1}{2}}_{2,\infty}$-norm of the sup-critical solutions. We have already built some sup-critical elements in the space $\dot{H}^s$. It turns out that, starting from this statement, we shall prove that data generating a sup-critical element are not only in $\dot{H}^s$, but also in some others spaces such as $\dot{B}^{\frac{1}{2}}_{2,\infty}\cap \dot{B}^{s'}_{2,\infty}$, with $s'$ satisfiying the condition given below, which stems from the proof of Lemma \ref{fluctuation lemma}. \\
The statement given below is actually a bit stronger than the one we want to prove, since we are going to catch some sup-critical solutions not only in  $\dot{B}^{\frac{1}{2}}_{2,\infty}$ (as claimed by Theorem \ref{Big key theorem}) but also in $\dot{B}^{s'}_{2,\infty}$. The main idea to get such information on the regularity is to focus on the fluctuation part which is more regular than the solution itself. Notice that, in all this section, we use regularity index~$s'$ satisfying 
$$ s < s' < 2s- \frac{1}{2}\cdotp$$
\begin{theorem}
\label{theorem section5}\sl{
 There exists a data $\Phi_{0} \in  \dot{B}^{\frac{1}{2}}_{2,\infty}\cap \dot{H}^s \cap \dot{B}^{s'}_{2,\infty}$, such that $\ts(\Phi_{0}) < \infty$ and
 $$\sup_{t < \ts(\Phi_{0}) }\, (\ts(\Phi_{0}) - t)\, \| NS(\Phi_{0})(t) \|^{\sigma_{s}}_{\dot{H}^s} \, =\,\limsup_{t \to \ts(\Phi_{0})} (\ts(\Phi_{0}) - t)\, \| NS(\Phi_{0}) \|^{\sigma_{s}}_{\dot{H}^s} = M^{\sigma_{s}}_{c},$$
 $$\hbox{and for any} \quad t<\ts(\Phi_{0}), \quad \| NS(\Phi_{0}) \|_{\dot{B}^{\frac{1}{2}}_{2,\infty}} < \infty.$$
}\end{theorem}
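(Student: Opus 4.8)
\noindent The plan is to upgrade the $\dot{H}^s$-existence already obtained in Proposition \ref{proposition critical element}, where $\Phi_{0}$ is exactly the distinguished profile $\varphi^{j_{0}}$, realized as the weak $\dot{H}^s$-limit of the translated sequence $v_{0,n}(\cdotp + x_{0,n})$, with $v_{0,n}$ the rescaled near-blow-up profile $v_{0,n}(y) = \bigl(\ts(u_{0,n})-t_{n}\bigr)^{1/2} NS(u_{0,n})\bigl(t_{n},(\ts(u_{0,n})-t_{n})^{1/2}y\bigr)$ built in Step $1$. Since $\ts(\Phi_{0})=1<\infty$ and the double equality $\sup=\limsup=M_{c}^{\sigma_{s}}$ are granted by Proposition \ref{proposition critical element}, the only genuinely new facts are the membership $\Phi_{0}\in\dot{B}^{\frac{1}{2}}_{2,\infty}\cap\dot{B}^{s'}_{2,\infty}$ and the boundedness of the solution. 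Writing $\Lambda_{n}:=\bigl(\ts(u_{0,n})-t_{n}\bigr)^{1/2}$, $\theta_{n}:=t_{n}/\Lambda_{n}^{2}$, and denoting by $R_{n}$ the (critical) rescaling carrying $NS(u_{0,n})(t_{n})$ to $v_{0,n}$, I would rescale the Duhamel identity $NS(u_{0,n})(t_{n})=e^{t_{n}\Delta}u_{0,n}+B(u_{n},u_{n})(t_{n})$ into $v_{0,n}=e^{\theta_{n}\Delta}g_{n}+F_{n}$, with $g_{n}:=R_{n}u_{0,n}$ the rescaled datum and $F_{n}:=R_{n}B(u_{n},u_{n})(t_{n})$ the rescaled fluctuation.

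\noindent The heart of the argument is that $F_{n}$ is bounded, uniformly in $n$, in \emph{both} Besov spaces, while the heat part drops out of the weak limit. For $\dot{B}^{\frac{1}{2}}_{2,\infty}$ the scaling invariance of the norm and Lemma \ref{fluctuation 1/2} give at once $\|F_{n}\|_{\dot{B}^{\frac{1}{2}}_{2,\infty}}=\|B(u_{n},u_{n})(t_{n})\|_{\dot{B}^{\frac{1}{2}}_{2,\infty}}\leqslant C\,M_{c}^{2}$. For $\dot{B}^{s'}_{2,\infty}$ the rescaling produces a factor $\Lambda_{n}^{s'-\frac{1}{2}}$, which is \emph{exactly} compensated by the time weight $\bigl(\ts(u_{0,n})-t_{n}\bigr)^{-\frac{1}{2}(s'-\frac{1}{2})}=\Lambda_{n}^{-(s'-\frac{1}{2})}$ coming from Lemma \ref{fluctuation s'} evaluated at $t=t_{n}$, so that $\|F_{n}\|_{\dot{B}^{s'}_{2,\infty}}$ stays bounded too.

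\noindent For the heat part I would use the super-criticality of $\dot{H}^s$: since $\|g_{n}\|_{\dot{H}^s}=\Lambda_{n}^{s-\frac{1}{2}}\|u_{0,n}\|_{\dot{H}^s}$ with $s-\frac{1}{2}>0$ and $\Lambda_{n}\to0$, one has $g_{n}\to0$ in $\dot{H}^s$ (normalizing, as the scale-invariant quantities allow, so that $\|u_{0,n}\|_{\dot{H}^s}$ does not grow faster than $\Lambda_{n}^{-(s-\frac{1}{2})}$), hence $e^{\theta_{n}\Delta}g_{n}\to0$ in $\dot{H}^s$ by the contraction property of the heat semigroup. Consequently $\Phi_{0}$ is also the weak $\dot{H}^s$-limit of $F_{n}(\cdotp+x_{0,n})$. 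Testing against a fixed Littlewood--Paley block, weak convergence is preserved by $\dot{\Delta}_{k}$, so $\|\dot{\Delta}_{k}\Phi_{0}\|_{L^{2}}\leqslant\liminf_{n}\|\dot{\Delta}_{k}F_{n}\|_{L^{2}}$; multiplying by $2^{k/2}$ (resp. $2^{ks'}$) and taking the supremum over $k$ yields $\Phi_{0}\in\dot{B}^{\frac{1}{2}}_{2,\infty}\cap\dot{B}^{s'}_{2,\infty}$, with the Besov norms controlled by the uniform bounds on $F_{n}$.

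\noindent I would then close exactly as announced at the end of the partial proof of Theorem \ref{Big key theorem}: from $NS(\Phi_{0})(t)=e^{t\Delta}\Phi_{0}+B(NS(\Phi_{0}),NS(\Phi_{0}))(t)$ the first term is bounded by $C\,\|\Phi_{0}\|_{\dot{B}^{\frac{1}{2}}_{2,\infty}}$ (the heat semigroup is bounded on Besov spaces) and the second by $C\,M_{c}^{2}$ via Lemma \ref{fluctuation 1/2}, using that $NS(\Phi_{0})$ is sup-critical, so $(1-t)\|NS(\Phi_{0})(t)\|^{\sigma_{s}}_{\dot{H}^s}\leqslant M_{c}^{\sigma_{s}}$ for all $t<1$; this bounds $\sup_{t<1}\|NS(\Phi_{0})(t)\|_{\dot{B}^{\frac{1}{2}}_{2,\infty}}$. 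The main obstacle is precisely the heat contribution at low frequencies: the estimate of Lemma \ref{fluctuation 1/2} would fail below regularity $\tfrac12$ (as recorded in the remark following Lemma \ref{fluctuation lemma}), and it is only the scaling invariance of $\dot{B}^{\frac{1}{2}}_{2,\infty}$, the exact time-weight/scaling match for $\dot{B}^{s'}_{2,\infty}$, and the vanishing of the super-critical norm $\|g_{n}\|_{\dot{H}^s}$ (so that the uncontrolled low frequencies of the data escape to frequency zero and disappear in the weak limit) that make the scheme go through.
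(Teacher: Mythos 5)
Your overall strategy is the one the paper follows: split the rescaled near-blow-up sequence into a tendency (heat) part that vanishes and a fluctuation part that is uniformly bounded in $\dot{B}^{\frac{1}{2}}_{2,\infty}\cap\dot{B}^{s'}_{2,\infty}$ (scaling invariance of the first norm, exact compensation between the rescaling factor $\Lambda_{n}^{s'-\frac{1}{2}}$ and the time weight of Lemma \ref{fluctuation s'} for the second), transfer these bounds to the distinguished profile by weak lower semicontinuity, and close with Duhamel. Your shortcut of testing against Littlewood--Paley blocks and using only that the distinguished profile $\varphi^{j_{0}}$ has unit scale (so that it is a plain translated weak limit) even lets you bypass Lemma \ref{petite et grande echelle}; that part is sound.

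The genuine gap is the sequence you run this on. You take $v_{0,n}$ from Step $1$ of the partial proof of Theorem \ref{Big key theorem}, i.e.\ rescalings of $NS(u_{0,n})(t_{n})$ for the \emph{original minimizing sequence} $(u_{0,n})$. For that sequence the only available control is the $\limsup$ bound (\ref{limsup tn}), valid for $t\geqslant t_{n}$; nothing bounds $(\ts(u_{0,n})-t)^{\frac{1}{\sigma_{s}}}\|NS(u_{0,n})(t)\|_{\dot{H}^{s}}$ on $[0,t_{n}]$ uniformly in $n$. But the hypothesis of Lemmas \ref{fluctuation 1/2} and \ref{fluctuation s'} is a bound over the \emph{whole} lifespan: the Duhamel integral defining $B(u_{n},u_{n})(t_{n})$ runs over all of $[0,t_{n}]$ and the proof inserts $\|u(t')\|^{2}_{\dot{H}^{s}}\leqslant M_{u}^{2}\,(\ts-t')^{-2/\sigma_{s}}$ for every $t'$. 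Hence your claim $\|F_{n}\|_{\dot{B}^{\frac{1}{2}}_{2,\infty}}\leqslant C\,M_{c}^{2}$ is unjustified; the relevant constant $M_{u_{n}}$ may blow up with $n$. The same problem affects the tendency part: $\|g_{n}\|_{\dot{H}^{s}}=\Lambda_{n}^{s-\frac{1}{2}}\|u_{0,n}\|_{\dot{H}^{s}}$, and $(\ts(u_{0,n})-t_{n})^{\frac{1}{2}(s-\frac{1}{2})}\|u_{0,n}\|_{\dot{H}^{s}}$ is scale-invariant, so ``normalizing'' cannot make it small (this particular point could be repaired by pushing $t_{n}$ still closer to $\ts(u_{0,n})$, but the fluctuation bound cannot). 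The paper's remedy is a second pass: first extract, via Proposition \ref{proposition critical element }, a sup-critical element $\Psi_{0}$ for which the supremum identity (\ref{prop point 2}) holds, giving $\sup_{\tau<1}(1-\tau)\|NS(\Psi_{0})(\tau)\|^{\sigma_{s}}_{\dot{H}^{s}}=M_{c}^{\sigma_{s}}$ and in particular $\|\Psi_{0}\|^{\sigma_{s}}_{\dot{H}^{s}}\leqslant M_{c}^{\sigma_{s}}$; then rescale near the blow-up time of this single solution. With that starting point both the uniform fluctuation bounds and the vanishing of the tendency part become legitimate, and the rest of your argument goes through.
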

\medbreak 

\begin{proof}
 \noindent The idea of the proof is to start with the existence of sup-sup-critical elements in $\dot{H}^s$. Indeed, we have proved previously that there exists a data $\Psi_{0} \in \dot{H}^s$, such that $\Psi \eqdefa NS(\Psi_{0})$ is sup-critical. Therefore, by definition of $\limsup$, there exists a sequence $t_{n} \nearrow \ts(\Psi_{0})$ such that 
 $$ \lim_{n \to +\infty} (\ts(\Psi_{0}) - t_{n})\, \| NS(\Psi_{0})(t_{n}) \|^{\sigma_{s}}_{\dot{H}^s} = M^{\sigma_{s}}_{c}.$$
 \noindent Let us introduce as before the rescaled sequence 
 $$v_{0,n}(y) =  \bigl(\ts(\Psi_{0})-t_{n}\bigr)^{\frac{1}{2}}\, NS(\Psi_{0})(t_{n}, \bigl(\ts(\Psi_{0})-t_{n}\bigr)^{\frac{1}{2}}y).$$
\noindent Such a sequence generates a solution which keeps on living until the time $1$ and satisfies
\begin{equation}
\begin{split}
 \|v_{0,n} \|^{\sigma_{s}}_{\dot{H}^s} &= \bigl(\ts(\Psi_{0})-t_{n}\bigr)\, \|NS(\Psi_{0,n})(t_{n}) \|^{\sigma_{s}}_{\dot{H}^s}. \\
\end{split}
\end{equation}
\noindent In the sake of simplicity, we note 
$$ \tau_{n} \eqdefa \ts(\Psi_{0})-t_{n}.$$
\noindent Previous computations imply that $(v_{0,n})$ is a bounded sequence of $\dot{H}^s$. Now, inspired by the idea of  Y. Meyer (fluctuation-tendancy method, \cite{YM}),  we decomposed the sequence $(v_{0,n})$ into 
\begin{equation}
 v_{0,n}(y) \eqdefa v_{0,n}(y) -\tau_{n}^{\frac{1}{2}}\, e^{t_{n}\Delta}\Psi_{0}( \tau_{n}^{\frac{1}{2}}\,y) \,\,\, + \,\,\, \tau_{n}^{\frac{1}{2}}\, e^{t_{n}\Delta}\Psi_{0}(\tau_{n}^{\frac{1}{2}}\,y),
\end{equation}
where we have 
\begin{equation*}
 v_{0,n}(y) \eqdefa \tau_{n}^{\frac{1}{2}}\,  NS(\Psi_{0})(t_{n}, \tau_{n}^{\frac{1}{2}}\, y)
\end{equation*}
It follows
\begin{equation}
 v_{0,n}(y) \eqdefa \tau_{n}^{\frac{1}{2}}\, \underbrace{\Bigl( NS(\Psi_{0})(t_{n},\cdotp) -  e^{t_{n}\Delta}\Psi_{0}    \Bigr)}_{B(\Psi,\Psi)(t_{n}) = \hbox{fluctuation part}} (\tau_{n}^{\frac{1}{2}}\, y) \,\,\, + \,\,\, \tau_{n}^{\frac{1}{2}} \underbrace{e^{t_{n}\Delta}\Psi_{0}}_{\hbox{tendancy part}}(\tau_{n}^{\frac{1}{2}}\,y).
\end{equation}
\begin{lemma}
\label{fluctuation born\'ee dans 3 espaces}\sl{
 The rescaled fluctuation part $\phi_{n} \eqdefa \tau_{n}^{\frac{1}{2}}\, B(\Psi,\Psi)(t_{n}, \tau_{n}^{\frac{1}{2}}\, \cdotp)$ is bounded in $\dot{H}^s \cap \dot{B}^{\frac{1}{2}}_{2,\infty} \cap \dot{B}^{s'}_{2,\infty}$.
}\end{lemma}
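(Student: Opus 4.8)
The plan is to recognise that the rescaling defining $\phi_{n}$ is nothing but the Navier--Stokes scaling operator $\Lambda_{\lambda,0}$, and to read off boundedness in each of the three spaces from the fluctuation estimates of Section~5 combined with the sup-critical bound on $\Psi$. Set $\tau_{n} \eqdefa \ts(\Psi_{0}) - t_{n}$. Since, for a function $f$ of the space variable alone, $\Lambda_{\tau_{n}^{-1/2},0}\,f(y) = \tau_{n}^{\frac{1}{2}} f(\tau_{n}^{\frac{1}{2}} y)$, we have
$$ \phi_{n} = \Lambda_{\tau_{n}^{-1/2},0}\bigl(B(\Psi,\Psi)(t_{n},\cdotp)\bigr). $$
Because $\dot{B}^{\sigma}_{2,\infty}$ is an $L^{2}$-based space, a direct computation on the Littlewood--Paley blocks (as in the scaling computation for $\dot{H}^{s}$) gives the homogeneity
$$ \bigl\| \Lambda_{\lambda,0} f \bigr\|_{\dot{B}^{\sigma}_{2,\infty}} = \lambda^{\frac{1}{2}-\sigma}\, \| f \|_{\dot{B}^{\sigma}_{2,\infty}}, $$
and likewise for $\dot{H}^{\sigma}$. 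Specialising to $\lambda = \tau_{n}^{-1/2}$ yields the single identity $\| \phi_{n} \|_{\dot{B}^{\sigma}_{2,\infty}} = \tau_{n}^{\frac{1}{2}(\sigma-\frac{1}{2})}\, \| B(\Psi,\Psi)(t_{n}) \|_{\dot{B}^{\sigma}_{2,\infty}}$ that drives all three estimates. Before using it I would record that, $\Psi$ being sup-critical and by the ``limsup is a sup'' statement \eqref{prop point 2}, the quantity $M_{\Psi} \eqdefa \sup_{t<\ts(\Psi_{0})}(\ts(\Psi_{0})-t)\,\|NS(\Psi_{0})(t)\|^{\sigma_{s}}_{\dot{H}^s} = M^{\sigma_{s}}_{c}$ is finite, so the hypotheses of Lemmas~\ref{fluctuation 1/2} and~\ref{fluctuation s'} hold uniformly along $t_{n}$.

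For the critical space $\dot{B}^{\frac{1}{2}}_{2,\infty}$ the exponent $\frac{1}{2}-\sigma$ vanishes, i.e.\ the norm is scaling invariant, so $\| \phi_{n} \|_{\dot{B}^{\frac{1}{2}}_{2,\infty}} = \| B(\Psi,\Psi)(t_{n}) \|_{\dot{B}^{\frac{1}{2}}_{2,\infty}}$, and Lemma~\ref{fluctuation 1/2} bounds the right-hand side by $C_{s}\,M_{\Psi}^{2}$ uniformly in $n$. For $\dot{B}^{s'}_{2,\infty}$ with $s<s'<2s-\frac{1}{2}$, I would feed the bound $\| B(\Psi,\Psi)(t_{n}) \|_{\dot{B}^{s'}_{2,\infty}} \leqslant C\, (\ts(\Psi_{0}) - t_{n})^{-\frac{1}{2}(s'-\frac{1}{2})} = C\, \tau_{n}^{-\frac{1}{2}(s'-\frac{1}{2})}$ from Lemma~\ref{fluctuation s'} into the scaling identity with $\sigma = s'$; the two powers of $\tau_{n}$ cancel exactly, leaving $\| \phi_{n} \|_{\dot{B}^{s'}_{2,\infty}} \leqslant C$ independently of $n$.

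It remains to treat $\dot{H}^s$. The quickest route avoids a separate fluctuation estimate and argues by difference: the rescaled tendency term $\Lambda_{\tau_{n}^{-1/2},0}\bigl(e^{t_{n}\Delta}\Psi_{0}\bigr)$ has $\dot{H}^s$ norm equal to $\tau_{n}^{\frac{1}{2}(s-\frac{1}{2})}\,\|e^{t_{n}\Delta}\Psi_{0}\|_{\dot{H}^s} \leqslant \tau_{n}^{\frac{1}{2}(s-\frac{1}{2})}\,\|\Psi_{0}\|_{\dot{H}^s}$, which tends to $0$ since $s>\frac{1}{2}$ and $\tau_{n}\to 0$; as $(v_{0,n})$ is bounded in $\dot{H}^s$ and $\phi_{n} = v_{0,n} - \Lambda_{\tau_{n}^{-1/2},0}\bigl(e^{t_{n}\Delta}\Psi_{0}\bigr)$, the sequence $\phi_{n}$ is bounded in $\dot{H}^s$ too. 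Alternatively one bounds $\| \phi_{n} \|_{\dot{H}^s} = \tau_{n}^{\frac{1}{2}(s-\frac{1}{2})}\| B(\Psi,\Psi)(t_{n}) \|_{\dot{H}^s}$ directly, using $\| B(\Psi,\Psi)(t_{n}) \|_{\dot{H}^s}\leqslant \|\Psi(t_{n})\|_{\dot{H}^s}+\|\Psi_{0}\|_{\dot{H}^s}\lesssim \tau_{n}^{-\frac{1}{2}(s-\frac{1}{2})}$, so that the powers again cancel.

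The only genuinely delicate point is the bookkeeping: one must verify that the time-decay exponents produced in Section~5 match the scaling weight $\tau_{n}^{\frac{1}{2}(\sigma-\frac{1}{2})}$ in each space. This works precisely because $\frac{1}{\sigma_{s'}} = \frac{1}{2}(s'-\frac{1}{2})$ is exactly the homogeneity defect of $\dot{B}^{s'}_{2,\infty}$ relative to the critical index $\frac{1}{2}$, so the blow-up rate of the fluctuation is compensated by the rescaling by the shrinking factor $\tau_{n}^{1/2}$. Everything else reduces to the contraction property of the heat semigroup on $\dot{H}^s$ and the already-established boundedness of $(v_{0,n})$.
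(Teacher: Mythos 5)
Your proof is correct and follows essentially the same route as the paper: scaling identities for the $L^2$-based norms, the fluctuation estimates of Lemmas \ref{fluctuation 1/2} and \ref{fluctuation s'} for the two Besov norms (with exact cancellation of the powers of $\tau_{n}$), and the triangle inequality plus the sup-critical bound for $\dot{H}^s$. The only cosmetic difference is your alternative $\dot{H}^s$ argument via the boundedness of $(v_{0,n})$ and the vanishing of the rescaled tendency term, which is an equivalent variant of the paper's direct estimate.
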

\vsd
\begin{proof}
\noindent Indeed, concerning the $\dot{B}^{\frac{1}{2}}_{2,\infty}$-norm, we use firstly the scaling invariance of this norm and then we apply Lemma \ref{fluctuation lemma}, which gives
\begin{equation}
\sup_{n}\,  \| \phi_{n} \|_{\dot{B}^{\frac{1}{2}}_{2,\infty}} = \sup_{n}\, \|  NS(\Psi_{0})(t_{n},\cdotp) -  e^{t_{n}\Delta}\Psi_{0} \|_{\dot{B}^{\frac{1}{2}}_{2,\infty}} < \infty.
\end{equation}

\noindent Concerning the $\dot{H}^s$-norm, we apply successively the following arguments : scaling, triangular inequality and the fact that $NS(\Psi_{0})$ is a sup-critical element in $\dot{H}^s$. 
\begin{equation}
\begin{split}
 \| \phi_{n} \|^{\sigma_{s}}_{\dot{H}^{s}} &= \tau_{n} \, \|  NS(\Psi_{0})(t_{n},\cdotp) -  e^{t_{n}\Delta}\Psi_{0} \|^{\sigma_{s}}_{\dot{H}^{s}}\\
 &\lesssim \tau_{n} \, \|  NS(\Psi_{0})(t_{n},\cdotp) \|^{\sigma_{s}}_{\dot{H}^{s}} + \tau_{n} \, \|  e^{t_{n}\Delta}\Psi_{0} \|^{\sigma_{s}}_{\dot{H}^{s}}\\
 &\lesssim  \Bigl(M_{c} + \frac{1}{n}\Bigr)^{\sigma_{s}} + \tau_{n} \, \|  \Psi_{0} \|^{\sigma_{s}}_{\dot{H}^{s}} < \infty.\\
 \end{split}
\end{equation}
\noindent Therefore, $\ds{\sup_{n}\,   \| \phi_{n} \|^{\sigma_{s}}_{\dot{H}^{s}} < \infty}$.\\
\noindent Concerning the $\dot{B}^{s'}_{2,\infty}$-norm, scaling argument combinig with Lemma \ref{fluctuation lemma} yields
\begin{equation}
\begin{split}
 \| \phi_{n} \|^{\sigma_{s'}}_{\dot{B}^{s'}_{2,\infty}} &= \tau_{n} \, \|  NS(\Psi_{0})(t_{n},\cdotp) -  e^{t_{n}\Delta}\Psi_{0} \|^{\sigma_{s'}}_{\dot{B}^{s'}_{2,\infty}}.\\
 \end{split}
\end{equation}
\noindent This concludes the proof of this Lemma \ref{fluctuation born\'ee dans 3 espaces}.
\end{proof}

\noindent By vertue of profile theory, we perform a profile decomposition of the sequence $\phi_{n}$ in the Sobolev space ~$\dot{H}^s$. But in this decomposition, there is only left profiles with constant scale, as Lemma below will prove it. The idea is clear. As $\phi_{n}$ is bounded in the Besov space $\dot{H}^s \cap \dot{B}^{\frac{1}{2}}_{2,\infty}$, big scales vanish. Likewise, the fact that $\phi_{n}$ is bounded in the Besov space $\dot{H}^s \cap  \dot{B}^{s'}_{2,\infty}$  implies that small scales vanish. That is the point in the Lemma below.
\medskip 
\begin{lemma}
\label{petite et grande echelle}\sl{
 $\bullet$ If $(f_{n})$ is a bounded sequence in $\dot{B}^{\frac{1}{2}}_{2,\infty} \cap \dot{H}^s$ and if $\ds{\limsup_{n \to +\infty}\|f_{n}\|_{\dot{B}^{s}_{2,\infty}} = L >0}$, then there is no big scales in the profile decomposition of the sequence $f_{n}$ in $\dot{H}^s$. \\
 $\bullet$ If $(f_{n})$ is a bounded sequence in $\dot{B}^{s'}_{2,\infty} \cap \dot{H}^s$, with $\ds{s' >s>\frac{1}{2}}$ and if $\ds{\limsup_{n \to +\infty}\|f_{n}\|_{\dot{B}^{s}_{2,\infty}} = L >0}$, then there is no small scales in the profile decomposition of the sequence $f_{n}$ in $\dot{H}^s$. \\
}\end{lemma}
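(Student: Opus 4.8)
The plan is to argue profile by profile, exploiting that the amplitude factor $\Lambda^{\frac{3}{p}}_{\lambda,x_{0}}$ is tuned to be $\dot{H}^s$-invariant (recall $\frac{3}{p}=\frac{3}{2}-s$) but is \emph{not} invariant for the Besov regularities $\frac{1}{2}$ and $s'$, the mismatch having a definite sign. First I would record the elementary scaling identity, valid for every $\sigma$ and every profile $\varphi$,
\[ \| \Lambda^{\frac{3}{p}}_{\lambda,x_{0}}\varphi \|_{\dot{B}^{\sigma}_{2,\infty}} = \lambda^{s-\sigma}\, \| \varphi \|_{\dot{B}^{\sigma}_{2,\infty}}, \]
which follows from the dilation homogeneity $\|\varphi(\cdot/\lambda)\|_{\dot{B}^{\sigma}_{2,\infty}} = \lambda^{\frac{3}{2}-\sigma}\|\varphi\|_{\dot{B}^{\sigma}_{2,\infty}}$ together with $\frac{3}{p}=\frac{3}{2}-s$ (translations being irrelevant). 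Up to a diagonal extraction I may assume each scale $\lambda_{n,j}$ converges in $[0,+\infty]$; the \emph{big scales} are those with limit $+\infty$ and the \emph{small scales} those with limit $0$. The key fact I would borrow from Theorem \ref{theo profiles} is that each profile is recovered as a weak $\dot{H}^s$-limit of the rescaled sequence, namely $g_{n}^{j} \eqdefa (\Lambda^{\frac{3}{p}}_{\lambda_{n,j},x_{n,j}})^{-1} f_{n} \rightharpoonup \varphi^{j}$, the sequence $(g_n^j)$ being bounded in $\dot{H}^s$ since $\Lambda^{\frac{3}{p}}_{\lambda,x_{0}}$ preserves that norm.

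For the first point I would suppose some profile $\varphi^{j}$ with $\lambda_{n,j}\to+\infty$ were nonzero. Applying the scaling identity with $\sigma=\frac{1}{2}$ gives
\[ \|g_{n}^{j}\|_{\dot{B}^{\frac{1}{2}}_{2,\infty}} = \lambda_{n,j}^{\frac{1}{2} - s}\,\|f_{n}\|_{\dot{B}^{\frac{1}{2}}_{2,\infty}} . \]
Since $s>\frac{1}{2}$ the exponent $\frac{1}{2}-s$ is negative, and the boundedness of $(f_n)$ in $\dot{B}^{\frac{1}{2}}_{2,\infty}$ then forces $g_n^j\to 0$ strongly in $\dot{B}^{\frac{1}{2}}_{2,\infty}$, hence in $\mathcal{S}'$. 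But $g_n^j\rightharpoonup \varphi^j$ in $\dot{H}^s$ also gives $g_n^j\to\varphi^j$ in $\mathcal{S}'$, so by uniqueness of the distributional limit $\varphi^j=0$, a contradiction; thus no big scale survives. The second point is proved identically with $\sigma=s'$: now $\|g_n^j\|_{\dot{B}^{s'}_{2,\infty}} = \lambda_{n,j}^{s'-s}\|f_n\|_{\dot{B}^{s'}_{2,\infty}}$, and since $s'>s$ the exponent is positive, so $\lambda_{n,j}\to 0$ again drives the rescaled sequence to $0$ in $\dot{B}^{s'}_{2,\infty}$ and forces $\varphi^j=0$. The hypothesis $L=\limsup_n \|f_n\|_{\dot{B}^{s}_{2,\infty}}>0$ serves only to guarantee that the decomposition is nontrivial (at least one $\varphi^j\neq 0$) and plays no role in ruling out a given scale.

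The only genuinely delicate point, and the one I expect to require care, is the last implication of each argument: that strong convergence to $0$ in $\dot{B}^{\frac{1}{2}}_{2,\infty}$ (resp. $\dot{B}^{s'}_{2,\infty}$) together with weak convergence to $\varphi^j$ in $\dot{H}^s$ forces $\varphi^j=0$ in these homogeneous realizations. I would handle it by testing against Schwartz functions $\phi$ whose Fourier transform vanishes near the origin: for such $\phi$ one has $\langle g_n^j,\phi\rangle = \langle g_n^j,\, |D|^{-2s}\phi\rangle_{\dot{H}^s}$ with $|D|^{-2s}\phi\in\dot{H}^s$, so weak $\dot{H}^s$-convergence yields $\langle g_n^j,\phi\rangle\to\langle\varphi^j,\phi\rangle$, while the vanishing Besov norm controls $|\langle g_n^j,\phi\rangle|$ and sends it to $0$; these test functions separate the realizations of $\dot{H}^s$ and $\dot{B}^{\sigma}_{2,\infty}$ in $\mathcal{S}'$ for the indices at hand ($0<s,s'<\frac{3}{2}$), giving $\varphi^j=0$. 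Everything else reduces to the one-line scaling computation, whose only subtlety is getting the sign of the exponent right — which is precisely where $s>\frac{1}{2}$ (killing big scales against $\dot{B}^{\frac{1}{2}}_{2,\infty}$) and $s'>s$ (killing small scales against $\dot{B}^{s'}_{2,\infty}$) enter.
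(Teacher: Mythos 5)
Your argument is correct, but it takes a genuinely different route from the paper's. The paper works directly on the sequence $f_{n}$: for each $n$ it picks a frequency $k_{n}$ at which the $\dot{B}^{s}_{2,\infty}$-norm of $f_{n}$ is essentially attained, writes $2^{k_{n}s}\|\dot{\Delta}_{k_{n}}f_{n}\|_{L^2}\leq 2^{k_{n}(s-\frac{1}{2})}\|f_{n}\|_{\dot{B}^{1/2}_{2,\infty}}$, and uses $L>0$ together with the $\dot{B}^{1/2}_{2,\infty}$-bound to conclude $\liminf k_{n}>-\infty$, i.e. the Besov norm cannot concentrate at arbitrarily low frequencies; the exclusion of big scales is then read off from this. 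You instead argue profile by profile: the scaling identity $\|(\Lambda^{3/p}_{\lambda,x_{0}})^{-1}f_{n}\|_{\dot{B}^{\sigma}_{2,\infty}}=\lambda^{\sigma-s}\|f_{n}\|_{\dot{B}^{\sigma}_{2,\infty}}$ (your displayed identity is the forward version of the same computation) combined with the fact that $\varphi^{j}$ is the weak $\dot{H}^s$-limit of the rescaled translated sequence kills any profile whose scale degenerates. Your version is more self-contained precisely at the step the paper leaves implicit, namely the passage from ``the norm does not concentrate at low frequencies'' to ``profiles with large scales vanish,'' and your observation that the hypothesis $L>0$ is not actually needed to rule out a given scale is correct: the paper uses $L>0$ only because of how its argument is organized, and the hypothesis really matters downstream, to guarantee that a nonzero profile survives. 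Two small caveats. First, the weak-limit characterization of the profiles is not in the statement of Theorem \ref{theo profiles} as printed; it is, however, part of G\'erard's construction and the paper itself invokes it in Section 6 (``each profile $V^{j}$ can be seen as a translated weak limit''), so borrowing it is legitimate. Second, your dilation identity for $\dot{B}^{\sigma}_{2,\infty}$ is exact for the finite-difference norm only when $0<\sigma<1$, whereas $s'$ may exceed $1$; via Littlewood--Paley it holds up to constants independent of $\lambda$, which is all you need. Your treatment of the delicate point (testing against Schwartz functions whose Fourier transform vanishes near the origin, and noting that no nonzero polynomial belongs to $\dot{H}^s$) is sound.
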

\medbreak
 \begin{proof}
 We only proof the first part of the Lemma. The other one is similar. If ~$\ds{\limsup_{n \to +\infty}\|f_{n}\|_{\dot{B}^{s}_{2,\infty}} = L >0}$, it means there exists an extraction $\varphi(n)$ such that $\ds{\| f_{\varphi(n)} \|_{\dot{B}^{s}_{2,\infty}} \geqslant \frac{L}{2}} \cdotp$ Otherwise, for any subsequence of $(f_{n})$, we would have 
\begin{equation*}
  \| f_{\varphi(n)} \|_{\dot{B}^{s}_{2,\infty}} < \frac{L}{2} \quad \hbox{and thus,} \quad \lim_{n \to +\infty} \| f_{\varphi(n)} \|_{\dot{B}^{s}_{2,\infty}} \leqslant \frac{L}{2}\cdotp
\end{equation*}
\noindent As a result, we would have $\ds{\limsup_{n \to +\infty}\|f_{n}\|_{\dot{B}^{s}_{2,\infty}} \leqslant \frac{L}{2} < L}$, which is wrong by hypothesis. Moreover, by definition of the Besov norm, we can find a sequence $(k_{n})_{n \in \Z}$, such that 
 \begin{equation}
  \lim_{n \to +\infty} 2^{k_{n}s} \| \Delta_{k_{n}}\, f_{\varphi(n)}\|_{L^2} = \| f_{\varphi(n)} \|_{\dot{B}^{s}_{2,\infty}}.
 \end{equation}
Therefore, $\ds{\lim_{n \to +\infty} 2^{k_{n}s} \| \Delta_{k_{n}}\, f_{\varphi(n)}\|_{L^2} \geqslant \frac{L}{2} }$. \\
Let us introduce the scale $\ds{\lambda_{n} \eqdefa 2^{-k_{n}}}$. As (up to extraction) \, $\ds{2^{k_{n}s} \| \Delta_{k_{n}}\, f_{\varphi(n)}\|_{L^2} \geqslant \frac{L}{2}}$, then one has  
\begin{equation*}
 2^{k_{n}(s-\frac{1}{2})}\,\, \| f_{\varphi(n)} \|_{\dot{B}^{\frac{1}{2}}_{2,\infty}}\geqslant \frac{L}{2}\cdotp 
\end{equation*}
Hence, the infimum limit of the sequence $k_{n}$ is not $-\infty$, otherwise, the term $\ds{ 2^{k_{n}(s-\frac{1}{2})}}$ would tend to $0$ and thus $L=0$ (since the sequence $\| f_{\varphi(n)} \|_{\dot{B}^{\frac{1}{2}}_{2,\infty}}$ is bounded by hypothesis), which is false by hypothesis. Therefore, $\lambda_{n} \nrightarrow +\infty$ : big scales are excluded from the profile decomposition of the sequence $f_{n}$. This concludes the proof of Lemma \ref{petite et grande echelle}.
 \end{proof}
\noindent \textit{Continuation of the proof of Theorem \ref{theorem section5}}. \\Let us come back to the proof of sup-critical element in the Besov space $\dot{B}^{\frac{1}{2}}_{2,\infty}\cap \dot{B}^{s'}_{2,\infty}$. Firstly, we check that $\phi_{n}$ satisfies hypothesis of Lemma \ref{petite et grande echelle}. As it was already checked previously, $\phi_{n}$ is bounded in $\dot{B}^{\frac{1}{2}}_{2,\infty} \cap \dot{H}^s \cap \dot{B}^{s'}_{2,\infty}$. Concerning assumption $\ds{\limsup_{n \to +\infty} \| \phi_{n}\|_{\dot{B}^{s}_{2,\infty}} >0}$, by scaling argument, one has 
\begin{equation}
\begin{split}
\|\phi_{n}\|^{\sigma_{s}}_{\dot{B}^{s}_{2,\infty}} &= \tau_{n} \, \|  NS(\Psi_{0})(t_{n},\cdotp) -  e^{t_{n}\Delta}\Psi_{0} \|^{\sigma_{s}}_{\dot{B}^{s}_{2,\infty}}
= (\ts(\Psi_{0}) - t_{n})    \|  NS(\Psi_{0})(t_{n},\cdotp) -  e^{t_{n}\Delta}\Psi_{0} \|^{\sigma_{s}}_{\dot{B}^{s}_{2,\infty}}\\ 
&\geqslant (\ts(\Psi_{0}) - t_{n})    \|  NS(\Psi_{0})(t_{n},\cdotp)\|^{\sigma_{s}}_{\dot{B}^{s}_{2,\infty}} \,\, -   \,\,(\ts(\Psi_{0}) - t_{n})  \|  \Psi_{0} \|^{\sigma_{s}}_{\dot{H}^{s}}.
\end{split}
\end{equation}
Obviously, the term $\ds{(\ts(\Psi_{0}) - t_{n})  \|  \Psi_{0} \|^{\sigma_{s}}_{\dot{H}^{s}}}$ tends to $0$ when $n$ goes to $+\infty$. By vertue of (\ref{remark c}) and \cite{LR}, there exists a constant $c>0$ such that $\ds{(\ts(\Psi_{0}) - t_{n})    \|  NS(\Psi_{0})(t_{n},\cdotp)\|^{\sigma_{s}}_{\dot{B}^{s}_{2,\infty}}  \geqslant c}$. Therefore, $$\limsup_{n \to +\infty} \|\phi_{n}\|_{\dot{B}^{s}_{2,\infty}} >0$$ 
\noindent and thus profile decomposition of  $\phi_{n}$ in the space $\dot{H}^s$ is reduced to (with notations of Theorem \ref{theo profiles})
\begin{equation}
 \phi_{n} = \sum_{j \geqslant 0}^{J} V^{j}(\cdotp-x_{n,j}) \, + \, r_{n}^{J}.
\end{equation}
Moreover, as the sequence $\phi_{n}$ is bounded in $\dot{B}^{\frac{1}{2}}_{2,\infty} \cap \dot{B}^{s'}_{2,\infty}$, profiles $V^{j}$  belong also to $\dot{B}^{\frac{1}{2}}_{2,\infty} \cap \dot{B}^{s'}_{2,\infty}$. That's the crucial point in the proof. Indeed, each profile $V^{j}$  can be seen as a translated (by $x_{n,j}$) weak limit of the sequence $\phi_{n}$. As a result, we get immediately
$$  \|    V^{j}   \|_{\dot{B}^{\frac{1}{2}}_{2,\infty}}  \leqslant  \liminf_{n \to +\infty} \|     \phi_{n} \|_{\dot{B}^{\frac{1}{2}}_{2,\infty}} < \infty \quad \hbox{and} \quad \|    V^{j}   \|_{\dot{B}^{s'}_{2,\infty}}  \leqslant  \liminf_{n \to +\infty} \|     \phi_{n} \|_{\dot{B}^{s'}_{2,\infty}} < \infty.$$


\noindent Let us come back to the sequence $(v_{0,n})$ defined by $$ v_{0,n} \eqdefa \phi_{n} \,\, +\,\,  \tau_{n}^{\frac{1}{2}}\ e^{t_{n}\Delta}\Psi_{0}(\tau_{n}^{\frac{1}{2}}\,\cdotp).$$ As it has been already underlined previously, the term $\ds{\gamma_{n} \eqdefa \tau_{n}^{\frac{1}{2}}\ e^{t_{n}\Delta}\Psi_{0}(\tau_{n}^{\frac{1}{2}}\,\cdotp)}$ tends to $0$ in $\dot{H}^s$-norm (and thus in $L^p$-norm, by Sobolev embedding) since 
\begin{equation}
 \| \tau_{n}^{\frac{1}{2}}\ e^{t_{n}\Delta}\Psi_{0}(\tau_{n}^{\frac{1}{2}}\,\cdotp) \|^{\sigma_{s}}_{\dot{H}^s} = \tau_{n} \, \| \ e^{t_{n}\Delta}\Psi_{0} \|^{\sigma_{s}}_{\dot{H}^s} \leqslant \tau_{n} \, \| \ \Psi_{0}\|^{\sigma_{s}}_{\dot{H}^s}.
\end{equation}
\noindent Combining the profile decomposition of $(\phi_{n})$ with the definition of $(v_{0,n})$, we finally get
\begin{equation*}
v_{0,n} = \sum_{j \geqslant 0}^{J} V^{j}(\cdotp-x_{n,j}) \, + \, r_{n}^{J} \, +\, \gamma_{n},
\end{equation*}
with $\ds{\lim_{J \to +\infty}\limsup_{n \to+\infty}\|r_{n}^{J}\|_{L^{p}} =0}$ and $\ds{ \lim_{n \to+\infty}\|  \gamma_{n}  \|_{L^{p}} =0}$.
\noindent By vertue of Lemma \ref{lemme allure de la solution}, one has for any $\tau <1$
\begin{equation*}
  NS(v_{0,n})(\tau) = \sum_{j \geqslant 0}^{J} NS(V^{j})(\tau,\cdotp-x_{n,j}) + e^{\tau \Delta} (r_{n}^{J} + \gamma_{n} ) + R_{n}^{J}(\tau).
\end{equation*}
\noindent By definition of the sequence $(v_{0,n})$, $ NS(v_{0,n})$ is given by 
$$  NS(v_{0,n})(\tau,\cdotp) = \bigl(\ts(\Psi_{0})-t_{n}\bigr)\,^\frac{1}{2} \, NS(\Psi_{0})\bigl( t_{n} + \tau\,\bigl(\ts(\Psi_{0})-t_{n}\bigr)\, ,\, \bigl(\ts(\Psi_{0})-t_{n}\bigr)\,^\frac{1}{2}\,\cdotp \bigr).$$
\noindent Once again, we denote $\ds{\widetilde{t_{n}} = t_{n} + \tau\,\bigl(\ts(\Psi_{0})-t_{n}\bigr)\,}$ and one has
\begin{equation*}
(1-\tau)\, \|NS(v_{0,n})(\tau,\cdotp) \|^{\sigma_{s}}_{\dot{H}^s} = \bigl(\ts(\Psi_{0}) - \widetilde{t_{n}}\bigr) \, \|NS(\Psi_{0})\bigl( \widetilde{t_{n}} ,\cdotp \bigr)\|^{\sigma_{s}}_{\dot{H}^s}.
\end{equation*}
\noindent As $\widetilde{t_{n}} \geqslant t_{n}$ for any $n$, we get
$$(1-\tau)\| NS(v_{0,n})(\tau) \|^{\sigma_{s}}_{\dot{H}^s} = (\ts(\Psi_{0})-\widetilde{t_{n}})\| NS(\Psi_{0})(\widetilde{t_{n}}) \|^{\sigma_{s}}_{\dot{H}^s} \leqslant M^{\sigma_{s}}_{c} + \frac{2}{n}\cdotp$$
\noindent Hence, Proposition \ref{proposition critical element } implies there exists some a unique profile $ \Phi_{0} $ in~$\dot{B}^{\frac{1}{2}}_{2,\infty}~\cap ~\dot{H}^s~\cap \dot{B}^{s'}_{2,\infty}$ such that the $NS$-solution genrated by this profile is a sup-critical solution. As $ \Phi_{0} $ belongs to~$\dot{B}^{\frac{1}{2}}_{2,\infty}$, Lemma \ref{fluctuation lemma} implies that $NS(\Phi_{0})$ is bounded in the same space.  This ends up the proof of Theorem \ref{theorem section5}.\\ Hence, we claim that the proof of Theorem \ref{Big key theorem} is over. Indeed, this stems from an interpolation argument. By vertue of Proposition \ref{interpolation}, we have for any $\ds{s < s_{1} < s'}$
\begin{equation}
\begin{split}
\| \Phi_{0} \|_{\dot{H}^{s_{1}}} \leqslant \| \Phi_{0} \|_{\dot{B}^{s_{1}}_{2,1}} \leqslant  \, \| \Phi_{0}  \|^{\theta}_{\dot{B}^{s}_{2,\infty}} \,\, \| \Phi_{0}  \|^{1-\theta}_{\dot{B}^{s'}_{2,\infty}} \,\, \leqslant  \, \| \Phi_{0}  \|^{\theta}_{\dot{H}^{s}}\,\, \| \Phi_{0}  \|^{1-\theta}_{\dot{B}^{s'}_{2,\infty}}.
\end{split}
\end{equation}
This concludes the proof of Theorem \ref{Big key theorem}. 
\end{proof}

\section{Another notion of critical solution}
\noindent In this section, we wonder if among sup-critical solutions,  we can find some of them which reach the biggest infimum limit of the quantity~$\ds{(\ts(u_{0}) - t)\, \| NS(u_{0})(t) \|^{\sigma_{s}}_{\dot{H}^s}}$. We define the following set $\mathcal{E}_{c}$ by
\begin{equation*}
\begin{split}
\mathcal{E}_{c} \eqdefa  &\Bigl\{ u_{0} \in \dot{B}^{\frac{1}{2}}_{2,\infty} \cap \dot{H}^s \cap \dot{B}^{s'}_{2,\infty}  \quad \hbox{such that} \,\, \ts(u_{0}) < \infty \,\, ;\\
&\sup_{t <\ts(u_{0})}\, (\ts(u_{0}) - t)\,  \| NS(u_{0})(t) \|^{\sigma_{s}}_{\dot{H}^s}  =\limsup_{t \to \ts(u_{0})} (\ts(u_{0}) - t)\,  \| NS(u_{0})(t) \|^{\sigma_{s}}_{\dot{H}^s} =\, M^{\sigma_{s}}_{c}\,\, ;\\
& \hbox{for any} \quad t<\ts(u_{0}),   \quad \| NS(u_{0})(t) \|_{\dot{B}^{\frac{1}{2}}_{2,\infty}} < \infty
\quad \hbox{and} \quad (\ts(u_{0}) - t)\,\,  \| NS(u_{0})(t) \|^{^{\sigma_{s'}}}_{\dot{B}^{s'}_{2,\infty}} < \infty \Bigr\}. 
\end{split}
\end{equation*}
\noindent Let us introduce the following quantity  $m^{\sigma_{s}}_{c}$ 
$$ m^{\sigma_{s}}_{c}  \eqdefa  \sup_{u_{0} \,\in\, \mathcal{E}_{c} }   \bigl\{  \liminf_{t \to \ts(u_{0})}(\ts(u_{0}) - t)\, \| NS(u_{0})(t) \|^{\sigma_{s}}_{\dot{H}^s} \bigr\}.$$
\begin{definition}\sl{(sup-inf-critical solution)\\
A solution $u = NS(u_{0})$ is said to be a sup-inf-critical solution if $u_{0}$ belongs to  $\mathcal{E}_{c}$ and 
\begin{equation}
\begin{split}
\liminf_{t \to \ts(u_{0})} (\ts(u_{0}) - t)\,  \| NS(u_{0})(t) \|^{\sigma_{s}}_{\dot{H}^s} =\, m^{\sigma_{s}}_{c}. 
\end{split}
\end{equation}
}\end{definition}
\noindent  Notice we need to look for such elements among sup-critical solutions, otherwise the definition of $m^{\sigma_{s}}_{c}$ would be meaningless. We claim that there exist such elements.

\begin{lemma}
\label{sup inf critical lemma}\sl{
There exists some elements belonging to $\mathcal{E}_{c}$, which are sup-inf-critical. 
}\end{lemma}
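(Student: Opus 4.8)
The plan is to reproduce, for the quantity $m^{\sigma_{s}}_{c}$, the concentration–compactness scheme already carried out for $M^{\sigma_{s}}_{c}$ in Proposition \ref{proposition critical element } and Theorem \ref{theorem section5}, the only genuinely new point being the control of the \emph{liminf} (rather than the limsup) under the profile limit. First I would record that $m^{\sigma_{s}}_{c}$ is well defined: since every $u_{0}\in\mathcal{E}_{c}$ is sup-critical, its liminf is bounded above by its limsup $=M^{\sigma_{s}}_{c}$, while (\ref{remark c}) forces $\liminf_{t\to\ts(u_{0})}(\ts(u_{0})-t)\,\|NS(u_{0})(t)\|^{\sigma_{s}}_{\dot{H}^s}\geqslant c>0$; hence $0<c\leqslant m^{\sigma_{s}}_{c}\leqslant M^{\sigma_{s}}_{c}<\infty$.

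Next I would pick a maximizing sequence $(u_{0,n})_{n}\subset\mathcal{E}_{c}$ with $\liminf_{t\to\ts(u_{0,n})}(\ts(u_{0,n})-t)\,\|NS(u_{0,n})(t)\|^{\sigma_{s}}_{\dot{H}^s}\to m^{\sigma_{s}}_{c}$, and for each $n$ select a time $t_{n}\nearrow\ts(u_{0,n})$ and rescale exactly as in Step $1$ of the partial proof of Theorem \ref{Big key theorem}, setting $v_{0,n}(y)=(\ts(u_{0,n})-t_{n})^{\frac{1}{2}}\,NS(u_{0,n})(t_{n},(\ts(u_{0,n})-t_{n})^{\frac{1}{2}}y)$. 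The crucial elementary observation is that the change of variable $\widetilde{t}=t_{n}+\tau\,(\ts(u_{0,n})-t_{n})$ is an increasing bijection of $[0,1)$ onto $[t_{n},\ts(u_{0,n}))$ under which $(1-\tau)\,\|NS(v_{0,n})(\tau)\|^{\sigma_{s}}_{\dot{H}^s}=(\ts(u_{0,n})-\widetilde{t})\,\|NS(u_{0,n})(\widetilde{t})\|^{\sigma_{s}}_{\dot{H}^s}$; consequently both the supremum and the liminf are preserved, so each $v_{0,n}$ generates a sup-critical solution blowing up at time $1$ with $\liminf_{\tau\to1}(1-\tau)\,\|NS(v_{0,n})(\tau)\|^{\sigma_{s}}_{\dot{H}^s}\to m^{\sigma_{s}}_{c}$. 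Running the argument used in the proof of Theorem \ref{theorem section5} shows $(v_{0,n})$ is bounded in $\dot{H}^s\cap\dot{B}^{\frac{1}{2}}_{2,\infty}\cap\dot{B}^{s'}_{2,\infty}$, so Proposition \ref{proposition critical element } together with the scale-selection Lemma \ref{petite et grande echelle} furnishes, after extraction, a single blow-up profile $\Phi_{0}\in\dot{H}^s\cap\dot{B}^{\frac{1}{2}}_{2,\infty}\cap\dot{B}^{s'}_{2,\infty}$ whose solution is sup-critical; by Lemma \ref{fluctuation lemma} and the interpolation argument closing §6 this yields $\Phi_{0}\in\mathcal{E}_{c}$.

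It then remains to show $\liminf_{\tau\to1}(1-\tau)\,\|NS(\Phi_{0})(\tau)\|^{\sigma_{s}}_{\dot{H}^s}=m^{\sigma_{s}}_{c}$. The upper bound is free, since $\Phi_{0}\in\mathcal{E}_{c}$ and $m^{\sigma_{s}}_{c}$ is the supremum over $\mathcal{E}_{c}$. For the lower bound I would use that, by Step $4$ of Proposition \ref{proposition critical element }, $\Phi_{0}$ is the \emph{only} profile with lifespan equal to $1$ (that is $|\widetilde{\mathcal{J}_{1}}|=1$): all remaining profiles $NS(\varphi^{j})$ stay bounded in $\dot{H}^s$ as $\tau\to1$, and so does the heat-flow term, whence the orthogonality identity (\ref{Pythagore}) reads, for $\tau$ near $1$, $(1-\tau)^{\frac{2}{\sigma_{s}}}\|NS(v_{0,n})(\tau)\|^{2}_{\dot{H}^s}=(1-\tau)^{\frac{2}{\sigma_{s}}}\|NS(\Phi_{0})(\tau)\|^{2}_{\dot{H}^s}+o_{\tau}(1)+(1-\tau)^{\frac{2}{\sigma_{s}}}\gamma_{n}^{J}(\tau)$, the $o_{\tau}(1)$ coming from the bounded terms multiplied by $(1-\tau)^{\frac{2}{\sigma_{s}}}$. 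Arguing by contradiction, if the liminf of $\Phi_{0}$ were $m^{\sigma_{s}}_{c}-2\delta$, I would fix a dip time $\rho<1$ at which $\Phi_{0}$ is close to this value and the bounded contribution is already negligible, then choose $n$ so that $\liminf(v_{0,n})$ is within $\delta$ of $m^{\sigma_{s}}_{c}$ and $\rho$ lies beyond the threshold past which $v_{0,n}$ realizes its liminf, and finally take $J,n$ large enough that $|\gamma_{n}^{J}(\rho)|$ is small by Lemma \ref{lemme allure de la solution}; this would force $(1-\rho)^{\frac{2}{\sigma_{s}}}\|NS(v_{0,n})(\rho)\|^{2}_{\dot{H}^s}$ to be simultaneously $\geqslant m^{\sigma_{s}}_{c}-\delta$ and $\leqslant m^{\sigma_{s}}_{c}-2\delta+o(1)$, a contradiction.

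The main obstacle is precisely this last step. The control of $\gamma_{n}^{J}$ provided by Lemma \ref{lemme allure de la solution} is uniform only on compact subintervals $[0,t]$ with $t<1$, whereas the liminf forces us to work arbitrarily close to the blow-up time $\tau=1$, where the three limiting procedures — $n\to\infty$ for the profile extraction, $\tau\to1$ for the liminf, and the error control itself — do not commute. The delicate part of the write-up will be to organize the order of quantifiers, fixing the dip time $\rho$ first and then extracting along a diagonal $n\to\infty$ compatible both with the liminf threshold of $v_{0,n}$ and with the smallness of $\gamma_{n}^{J}(\rho)$, so that the contradiction closes; everything else is a direct transcription of the scaling and profile computations already established.
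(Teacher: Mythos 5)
Your proposal is correct and follows essentially the same route as the paper: a maximizing sequence in $\mathcal{E}_{c}$, the same rescaling, reduction to a single blow-up profile via Proposition \ref{proposition critical element } and Lemma \ref{petite et grande echelle}, and a contradiction at a fixed dip time $\tau_{\varepsilon}$ where the factor $(1-\tau_{\varepsilon})^{\frac{2}{\sigma_{s}}}$ suppresses the non-blowing-up profiles and the remainder, after which $n,J\to+\infty$. The quantifier worry you flag at the end is resolved in the paper exactly as you suggest (dip time first, then $n,J$), and the "threshold" issue disappears because $t_{n}$ is chosen so that $(\ts-t)^{\frac{1}{\sigma_{s}}}\|NS(u_{0,n})(t)\|_{\dot{H}^s}\geqslant m_{c}-\varepsilon_{n}$ for \emph{all} $t\geqslant t_{n}$, so that after rescaling the lower bound holds on all of $[0,1)$ uniformly in $n$.
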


\begin{proof}
By definition of $m^{\sigma_{s}}_{c}$, we can find a sequence $(u_{0,n}) \in \dot{H}^s $ and a sequence $t_{n} \nearrow \ts(u_{0,n}) \equiv \ts $ (we can assume this, up to a rescaling) such that
\begin{equation}
\label{liminf tn}
m_{c} - \varepsilon_{n} \leqslant (\ts - t_{n})^{\frac{1}{\sigma_{s}}}\,  \| NS(u_{0,n})(t_{n}) \|_{\dot{H}^s} \leqslant m_{c} + \varepsilon_{n}
\end{equation} 
and 
\begin{equation}
\hbox{For any} \quad t \geqslant t_{n}, \quad   m_{c} - \varepsilon_{n} \leqslant (\ts - t)^{\frac{1}{\sigma_{s}}}\,  \| NS(u_{0,n})(t) \|_{\dot{H}^s}.
\end{equation} 
\noindent Assume in addition that the sequence $(u_{0,n})$ belongs to the set $\mathcal{E}_{c}$. As a consequence, we have 
\begin{equation}
\label{hypothese M_c}
\hbox{For any} \quad t \geqslant t_{n} \quad,   m_{c} - \varepsilon_{n} \leqslant (\ts - t)^{\frac{1}{\sigma_{s}}}\,  \| NS(u_{0,n})(t) \|_{\dot{H}^s} \leqslant M_{c} + \varepsilon_{n}.
\end{equation}
\noindent Considering the rescaled sequence $$v_{0,n}(y) = \bigl(\ts - t_{n}\bigr)^\frac{1}{2} \, NS(u_{0,n})\bigl( t_{n},(\ts - t_{n}\bigr)^\frac{1}{2}\,y \bigr).$$
Hence, $v_{0,n}$ satisfies properties below by scaling argument
\begin{equation}
\begin{split}
 \|v_{0,n} \|^{\sigma_{s}}_{\dot{H}^s} = \bigl(\ts -t_{n}\bigr)\,& \|NS(u_{0,n})(t_{n}) \|^{\sigma_{s}}_{\dot{H}^s}, \quad
 \|v_{0,n} \|_{\dot{B}^{\frac{1}{2}}_{2,\infty}} = \|NS(u_{0,n})(t_{n}) \|_{\dot{B}^{\frac{1}{2}}_{2,\infty}}\\
 &\hbox{and} \quad \|v_{0,n} \|^{\sigma_{s'}}_{\dot{B}^{s'}_{2,\infty}} = \bigl(\ts -t_{n}\bigr)\, \|NS(u_{0,n})(t_{n}) \|^{\sigma_{s'}}_{\dot{B}^{s'}_{2,\infty}}.  
\end{split}
\end{equation}
\noindent Combining (\ref{liminf tn}) with the fact that  $(u_{0,n})$ belongs to $\mathcal{E}_{c}$, we infer that the sequence $(v_{0,n})_{n \geqslant 1}$ is bounded in $ \dot{B}^{\frac{1}{2}}_{2,\infty} \cap \dot{H}^s \cap \dot{B}^{s'}_{2,\infty} $. 
Moreover, concerning the Navier-Stokes solution generated by such a data~$NS(v_{0,n}) $, we know that it keeps on living until the time~$\tau^* = 1$ and satisfies once again (with~$\ds{\widetilde{t_{n}} = t_{n} + \tau\,\bigl(\ts -t_{n}\bigr)\,}$)
\begin{equation}
(1-\tau)^{\frac{1}{\sigma_{s}}}\, \|NS(v_{0,n})(\tau) \|_{\dot{H}^s} = (\ts - \widetilde{t_{n}})^{\frac{1}{\sigma_{s}}}\, \|NS(u_{0,n})( \widetilde{t_{n}})\|_{\dot{H}^s}.
\end{equation}
\noindent As $\widetilde{t_{n}} \geqslant t_{n}$ for any $n$, we infer that for any $\tau <1$
$$(1-\tau)^{\frac{1}{\sigma_{s}}}\, \| NS(v_{0,n})(\tau) \|_{\dot{H}^s} \geqslant m_{c} - \varepsilon_{n}.$$
\noindent Let us sum up information we have on the sequence $v_{0,n}$. Firstly, the lifespan of the Navier-Stokes associated with the sequence $v_{0,n}$ is equal to $1$. Then, 
\begin{equation*}
\limsup_{\tau \to 1} (1-\tau)^{\frac{1}{\sigma_{s}}}\, \,\| NS(v_{0,n})(\tau) \|_{\dot{H}^s} = \limsup_{ \widetilde{t_{n}} \to \ts} \,\, (\ts-\widetilde{t_{n}})^{\frac{1}{\sigma_{s}}}\, \| NS(u_{0,n})(\widetilde{t_{n}}) \|_{\dot{H}^s},
\end{equation*}
which implies, thanks to (\ref{hypothese M_c}) and definition of $M_{c}$ , that for any $\tau <1$,
\begin{equation*}
\limsup_{\tau \to 1} \,\,(1-\tau)^{\frac{1}{\sigma_{s}}}\,\, \| NS(v_{0,n})(\tau) \|_{\dot{H}^s} = M_{c} \quad \hbox{and} \quad \| NS(v_{0,n})(\tau) \|_{\dot{B}^{\frac{1}{2}}_{2,\infty}} = \| NS(u_{0,n})(\widetilde{t_{n}}) \|_{\dot{B}^{\frac{1}{2}}_{2,\infty}} < \infty.
\end{equation*}
\noindent In addition, 
\begin{equation}
\begin{split} 
(1-\tau)^{\frac{1}{\sigma_{s'}}}\, \,\| NS(v_{0,n})(\tau) \|_{\dot{B}^{s'}_{2,\infty}} = (\ts-\widetilde{t_{n}})^{\frac{1}{\sigma_{s'}}}\, \| NS(u_{0,n})(\widetilde{t_{n}}) \|_{\dot{B}^{s'}_{2,\infty}} < \infty.
\end{split}
\end{equation}
\noindent To summerize, from the minimizing sequence $(u_{0,n})$ of the set $\mathcal{E}_{c} $, we build another sequence $(v_{0,n})$ (the rescaled sequence of $(u_{0,n})$) which also belongs to the set $\mathcal{E}_{c}$. Moreover, as the sequence $(v_{0,n})$  is bounded in the spaces $ \dot{B}^{\frac{1}{2}}_{2,\infty} \cap \dot{H}^s \cap \dot{B}^{s'}_{2,\infty} $ and satisfies $\ds{\limsup_{n \to +\infty}\,\,  \| v_{0,n} \|_{\dot{B}^{s}_{2,\infty}}    } < \infty$, Lemma \ref{petite et grande echelle} implies that  profile decomposition in $\dot{H}^s$ of such a sequence is reduced, up to extractions, to a sum of translated profiles and a remaining term (under notations of Theorem \ref{theo profiles})
$$ v_{0,n} =  \sum_{j \in \mathcal{J}_{1}} \varphi^{j}(\cdotp -x_{n,j}) + \psi_{n}^{J}.$$
\noindent By vertue of Theorem \ref{lemme allure de la solution}, combining with Proposition \ref{proposition critical element }, we infer there exists only one profile $\varphi^{j_{0}}$ which blows up at time $1$ and such that
\begin{equation}
NS(v_{0,n})(\tau,\cdotp) = NS(\varphi^{j_{0}})(\tau,\cdotp -x_{n,j_{0}}) \,\, +\, \sum_{\stackrel{j \in \mathcal{J}_{1}, j \neq j_{0}}{ \tau^{j}_{*} > 1}} NS(\varphi^{j})(\cdotp -x_{n,j}) +\,  \, e^{\tau \Delta}\psi_{n}^J (\cdotp) \,\, + \,\, R_{n}^J(\tau,\cdotp). 
\end{equation} 
\noindent By orthogonality, we have 
\begin{equation}
\label{relation1}
\begin{split}
\| NS(v_{0,n})(\tau) \|^{2}_{\dot{H}^s} &\geqslant  \| NS(\varphi^{j_{0}})(\tau) \|^{2}_{\dot{H}^s}    \,\, +\, \sum_{\stackrel{j \in \mathcal{J}_{1}, j \neq j_{0}}{ \tau^{j}_{*} > 1}} \| NS(\varphi^{j})(\tau) \|^{2}_{\dot{H}^s}\, +  \, +\|e^{\tau \Delta}\psi_{n}^J \|^{2}_{\dot{H}^s}\,\, + |\gamma_{n}^{J}(\tau)|. 
\end{split}
\end{equation} 
\noindent We want to prove that~$\ds{\liminf_{\tau \to 1} \,\, (1 -\tau )^{\frac{1}{\sigma_{s}}}\,  \| NS(\varphi^{j_{0}})(\tau) \|_{\dot{H}^s} \geqslant m_{c}}$. By definition of~$m_{c}$, this will imply that~$\ds{\liminf_{\tau \to 1} \,\, (1 -\tau )^{\frac{1}{\sigma_{s}}}\,  \| NS(\varphi^{j_{0}})(\tau) \|_{\dot{H}^s} = m_{c}}$. Let us assume that is not the case. Therefore,
$$ \exists \alpha_{0} >0, \forall \varepsilon >0, \,\, \exists \tau_{\varepsilon},\,\,\, \hbox{such that} \,\,\, 0<  (1 -\tau_{\varepsilon} )^{\frac{2}{\sigma_{s}}}\, < \varepsilon \,\,  \hbox{and} \,\, (1 -\tau_{\varepsilon} )^{\frac{2}{\sigma_{s}}}\,  \| NS(u_{0,n})(\tau_{\varepsilon}) \|^{2}_{\dot{H}^s} \leqslant m_{c}^2 - \alpha_{0}.$$
\noindent From (\ref{relation1}), we deduce that
\begin{equation*}
\begin{split}
(1 -\tau_{\varepsilon} )^{\frac{2}{\sigma_{s}}}\, \| NS(v_{0,n})(\tau_{\varepsilon}) \|^{2}_{\dot{H}^s} &=  (1 -\tau_{\varepsilon} )^{\frac{2}{\sigma_{s}}}\, \| NS(\varphi^{j_{0}})(\tau_{\varepsilon}) \|^{2}_{\dot{H}^s}    \,\,  +\,  (1 -\tau_{\varepsilon} )^{\frac{2}{\sigma_{s}}}\, \Bigl\{   \sum_{\stackrel{j \in \mathcal{J}_{1}, j \neq j_{0}}{ \tau^{j}_{*} > 1}}  \| NS(\varphi^{j})(\tau_{\varepsilon}) \|^{2}_{\dot{H}^s}  \, \\ &\quad+\, \|e^{\tau_{\varepsilon} \Delta}\psi_{n}^J \|^{2}_{\dot{H}^s}\, + \, \vert\gamma_{n}^{J}(\tau_{\varepsilon}) \vert\Bigr\}. 
\end{split}
\end{equation*} 
\noindent By hypothesis, $\ds{(1 -\tau_{\varepsilon} )^{\frac{1}{\sigma_{s}}}\, \| NS(v_{0,n})(\tau_{\varepsilon}) \|_{\dot{H}^s} \geqslant m_{c} - \varepsilon_{n} }$, and $\ds{1 -\tau_{\varepsilon}  \leqslant 1}$. Hence, we get
\begin{equation}
\begin{split}
\bigl(m_{c} - \varepsilon_{n}\bigr)^2 &\leqslant \,\, m_{c}^2 - \alpha_{0}    \,\, +\,  (1 -\tau_{\varepsilon} )^{\frac{2}{\sigma_{s}}}\, \Bigl\{   \sum_{\stackrel{j \in \mathcal{J}_{1}, j \neq j_{0}}{ \tau^{j}_{*} > 1}} \sup_{\tau \in [0,1]}\, \| NS(\varphi^{j})(\tau) \|^{2}_{\dot{H}^s}  \, +\, \|\psi_{n}^J \|^{2}_{\dot{H}^s}\Bigr\}\,\, + \vert\gamma_{n}^{J}(\tau_{\varepsilon})\vert . 
\end{split}
\end{equation} 
\noindent On the one hand, as profiles $\varphi^{j}$ have a lifespan $\tau^{j}_{*} > 1$, the quantity $\ds{\sup_{\tau \in [0,1]}\, \| NS(\varphi^{j})(\tau) \|^{2}_{\dot{H}^s} }$ is finite. On the other hand, by vertue of profile decomposition of the sequence $(v_{0,n})$, we have obviously that~$\ds{ \|\psi_{n}^J \|^{2}_{\dot{H}^s} \leqslant  \|v_{0,n} \|^{2}_{\dot{H}^s}}$. As we have proved that $(v_{0,n})$ is an element of the set~$\mathcal{E}_{c}$, we get in particular that~$\ds{\sup_{\tau < 1}\, (1-\tau)^{\frac{1}{\sigma_{s}}}\, \| NS(v_{0,n})(\tau) \|_{\dot{H}^s} = M_{c}}$, which leads to (at $\tau =0$) $\ds{\|v_{0,n} \|}_{\dot{H}^s} \leqslant  M_{c} $. Finally, for all ~$\tau_{\varepsilon}$,
$$  (1 -\tau_{\varepsilon} )^{\frac{2}{\sigma_{s}}}\, \Bigl\{   \sum_{\stackrel{j \in \mathcal{J}_{1}, j \neq j_{0}}{ \tau^{j}_{*} > 1}} \sup_{\tau \in [0,1]}\, \| NS(\varphi^{j})(\tau) \|^{2}_{\dot{H}^s}  \, +\, \|\psi_{n}^J \|^{2}_{\dot{H}^s}\Bigr\} \leqslant \frac{\alpha_{0}}{4},$$ we get
\begin{equation}
\begin{split}
\bigl(m_{c} - \varepsilon_{n}\bigr)^2 &\leqslant \,\, m_{c}^2 - \alpha_{0}    \,\, +\,  \frac{\alpha_{0}}{4}  + \vert\gamma_{n}^{J}(\tau_{\varepsilon})\vert . 
\end{split}
\end{equation} 
\noindent Now, by assumption of $\gamma_{n}^{J}$, we take the limit for $n$ and $J$ large enough, and we get
\begin{equation}
m_{c}^2 \leqslant m_{c}^2 \,-\, \frac{3\,\alpha_{0}}{4} \, +\, \frac{\alpha_{0}}{4},
\end{equation}
\noindent which is obviously absurd. Thus, we have proved that
$$ \liminf_{\tau \to 1} \,\,\, (1 -\tau )^{\frac{1}{\sigma_{s}}}\,  \| NS(\varphi^{j_{0}})(\tau) \|_{\dot{H}^s} = m_{c}.$$
This concludes the proof of Lemma \ref{sup inf critical lemma}. 
\end{proof}

\section{Structure Lemma for Navier-Stokes solutions with bounded data}
\noindent The sequence $(v_{0,n})_{n \geqslant 0}$ be a bounded sequence of initial data in $\dot{H}^s$. Thanks to Theorem \ref{theo profiles}, $(v_{0,n})_{n \geqslant 0}$ can be written as follows, up to an extraction
$$v_{0,n}(x) = \sum_{j=0}^{J} \Lambda^{\frac{3}{p}}_{\lambda_{n,j},x_{n,j}}\varphi^{j}(x) + \psi_{n}^{J}(x),$$
\noindent which can be written as follows
\begin{equation}
 \begin{split}
 v_{0,n}(x) &= \sum_{\stackrel{j \in \mathcal{J}_{1}}{j \leqslant J}} \varphi^{j}(x-x_{n,j})
           + \sum_{\stackrel{j \in \mathcal{J}^{{}{c}}_{1}}{j \leqslant J}} \Lambda^{\frac{3}{p}}_{\lambda_{n,j},x_{n,j}}\varphi^{j}(x) + \psi_{n}^{J}(x).\\
 \end{split}
\end{equation}
\noindent Let $\eta >0$ be the parameter of rough cutting off frequencies. We define by $w_{\eta}(x)$ and $w_{{}^{c}\eta}(x)$ the elements which Fourier transform is given by
\begin{equation}
\label{notations cut off}
\widehat{w_{\eta}}(\xi) = \widehat{w}(\xi) 1_{\{\frac{1}{\eta} \leqslant |\xi| \leqslant \eta \}} \quad \hbox{and} \quad
\widehat{w_{{}^{c}\eta}}(\xi) = \widehat{w}(\xi) \bigl(1 -1_{\{\frac{1}{\eta} \leqslant |\xi| \leqslant \eta \}}\bigr).
\end{equation}
\vsd 
\noindent After rough cutting off frequencies with respect to the notations $(\ref{notations cut off})$ and  sorting profiles supported in the annulus $1_{\{\frac{1}{\eta}\leqslant |\xi| \leqslant \eta\}}$ according to their scale (thanks to the orthogonality property of scales and cores, given by Theorem \ref{theo profiles}). We get the following profile decomposition \\
\begin{equation}
\label{decomposition apres regularisation}
 \begin{split}
 v_{0,n}(x) &= \sum_{j \in \mathcal{J}_{1}} \varphi^{j}(x-x_{n,j})
           + \sum_{j \in \mathcal{J}_{0}} \Lambda^{\frac{3}{p}}_{\lambda_{n,j},x_{n,j}}\varphi^{j}_{\eta}(x) + \sum_{j \in \mathcal{J}_{\infty}} \Lambda^{\frac{3}{p}}_{\lambda_{n,j},x_{n,j}}\varphi^{j}_{\eta}(x) + \psi_{n,\eta}^{J}(x)\\
&\hbox{where} \quad \psi_{n,\eta}^{J}(x) \eqdefa \sum_{\stackrel{j \in \mathcal{J}^{{}{c}}_{1} \equiv \mathcal{J}_{0} \cup \mathcal{J}_{\infty} }{j \leqslant J}} \Lambda^{\frac{3}{p}}_{\lambda_{n,j},x_{n,j}}V^{j}_{{}^{c}\eta}(x)\,  +\, \psi_{n}^{J}(x), 
 \end{split}
\end{equation}
\\
\noindent for any $j$ in $\mathcal{J}_{1} \subset J$, $\lambda_{n,j} =1$,\,\, for any $j$ in $\mathcal{J}_{0}$, $\displaystyle{\lim_{n \to +\infty} \lambda_{n,j} = 0}$\,\, and for any ~$j$ in $\mathcal{J}_{\infty}$, ~$\displaystyle{\lim_{n \to +\infty} \lambda_{n,j} = +\infty}$.\\
\noindent As mentionned in the introduction, the whole Lemma \ref{lemme allure de la solution} has been already proved in \cite{P}, except for the orthogonality property of the Navier-stokes solution associated with such a sequence of initial data. Therefore, we refer the reader to \cite{P} for details of the proof and here, we focus on the "Pythagore property". Let us recall the notations 
$$U^{0}_{n,\eta}\eqdefa \sum_{j \in \mathcal{J}_{0}} \Lambda^{\frac{3}{p}}_{\lambda_{n,j},x_{n,j}}\varphi^{j}_{\eta} \quad \hbox{and} \quad U^{\infty}_{n,\eta}\eqdefa \sum_{j \in \mathcal{J}_{\infty}} \Lambda^{\frac{3}{p}}_{\lambda_{n,j},x_{n,j}}\varphi^{j}_{\eta}.$$
We recall some properties on profiles with small and large scale and remaining term. We refer the reader to \cite{P} to the proof of the two propositions below.  
\begin{prop}\sl{ 
\label{smallbigscaling}
$$\hbox{For any} \,\, s_{1}<s,\,\,\hbox{for any} \,\, \eta >0, \,\, \hbox{for any} \,\, j \in \mathcal{J}_{0}, \,\, (\hbox{e.g} \,\, \lim_{n \to +\infty} \lambda_{n,j} = 0), \,\, \hbox{then} \,\, \lim_{n \to +\infty}\bigl\|U^{0}_{n,\eta}\bigr\|_{\dot{H}^{s_{1}}} = 0.$$
$$\hbox{For any} \,\, s_{2}>s,\,\,\,\,\hbox{for any} \,\, \eta >0, \,\, \hbox{for any} \,\, j \in \mathcal{J}_{\infty}, \,\, (\hbox{e.g} \,\, \lim_{n \to +\infty} \lambda_{n,j} = +\infty), \,\, \hbox{then} \,\, \lim_{n \to +\infty}\bigl\|U^{\infty}_{n,\eta}\bigr\|_{\dot{H}^{s_{2}}} = 0.$$
}\end{prop}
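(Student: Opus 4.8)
The plan is to reduce both statements to the behaviour of a single rescaled profile and then to sum the finitely many contributions indexed by $\mathcal{J}_0$ (resp. $\mathcal{J}_\infty$). The decisive observation is that $\frac{3}{p} = \frac{3}{2}-s$, which is exactly the homogeneity exponent making the $\dot{H}^s$-norm scaling invariant, while leaving every \emph{other} Sobolev norm genuinely scale-dependent. It is this mismatch that will furnish the decay.

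First I would record the precise scaling law. In dimension $3$ the homogeneous Sobolev norm obeys $\|f(\cdot/\lambda)\|_{\dot{H}^{s_{1}}} = \lambda^{\frac{3}{2}-s_{1}}\|f\|_{\dot{H}^{s_{1}}}$, and translations act isometrically, so a direct computation gives, for each profile,
$$\bigl\| \Lambda^{\frac{3}{p}}_{\lambda_{n,j},x_{n,j}}\varphi^{j}_{\eta}\bigr\|_{\dot{H}^{s_{1}}} = \lambda_{n,j}^{\,-(\frac{3}{2}-s)}\,\lambda_{n,j}^{\,\frac{3}{2}-s_{1}}\,\|\varphi^{j}_{\eta}\|_{\dot{H}^{s_{1}}} = \lambda_{n,j}^{\,s-s_{1}}\,\|\varphi^{j}_{\eta}\|_{\dot{H}^{s_{1}}}.$$
This is the heart of the matter: when $s_{1}<s$ the exponent $s-s_{1}$ is strictly positive, so a scale $\lambda_{n,j}\to 0$ (which is exactly the defining property of $j\in\mathcal{J}_{0}$) drives the whole expression to $0$.

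The second ingredient is the finiteness of the off-critical norm $\|\varphi^{j}_{\eta}\|_{\dot{H}^{s_{1}}}$, which is where the frequency cut-off at scale $\eta$ intervenes. Since $\widehat{\varphi^{j}_{\eta}}$ is supported in the annulus $\{\frac{1}{\eta}\le|\xi|\le\eta\}$, on that set $|\xi|^{2s_{1}}=|\xi|^{2s}|\xi|^{2(s_{1}-s)}\le \eta^{2(s-s_{1})}|\xi|^{2s}$, hence $\|\varphi^{j}_{\eta}\|_{\dot{H}^{s_{1}}}\le \eta^{\,s-s_{1}}\|\varphi^{j}\|_{\dot{H}^{s}}<\infty$. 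Consequently each summand $\lambda_{n,j}^{\,s-s_{1}}\|\varphi^{j}_{\eta}\|_{\dot{H}^{s_{1}}}$ tends to $0$; as the decomposition is taken at a fixed level $J$, the set $\mathcal{J}_{0}$ is finite and the triangle inequality delivers $\|U^{0}_{n,\eta}\|_{\dot{H}^{s_{1}}}\to 0$. The assertion for $U^{\infty}_{n,\eta}$ is perfectly symmetric: one takes $s_{2}>s$, so that $s-s_{2}<0$, and now $\lambda_{n,j}\to+\infty$ produces the vanishing factor $\lambda_{n,j}^{\,s-s_{2}}\to 0$, while the annular localization gives $\|\varphi^{j}_{\eta}\|_{\dot{H}^{s_{2}}}\le \eta^{\,s_{2}-s}\|\varphi^{j}\|_{\dot{H}^{s}}$.

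I do not anticipate a serious obstacle here; the one subtlety is conceptual rather than computational. The orthogonality of the profiles holds only for the \emph{critical} $\dot{H}^{s}$-norm and is \emph{not} available for the off-critical indices $s_{1},s_{2}$, so I would deliberately avoid any Pythagorean identity and argue term by term. This is legitimate precisely because, at fixed $J$, only finitely many profiles are involved, and the truncation at frequency $\eta$ both guarantees the finiteness of the off-critical norms and keeps all constants uniform in $n$.
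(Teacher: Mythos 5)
Your proof is correct. The paper itself does not prove Proposition \ref{smallbigscaling} but defers it to \cite{P}; your argument --- the exact scaling identity $\bigl\|\Lambda^{\frac{3}{p}}_{\lambda,x}\varphi\bigr\|_{\dot{H}^{s_{1}}}=\lambda^{s-s_{1}}\|\varphi\|_{\dot{H}^{s_{1}}}$ coming from $\frac{3}{p}=\frac{3}{2}-s$, the bound $\|\varphi^{j}_{\eta}\|_{\dot{H}^{s_{1}}}\leqslant \eta^{s-s_{1}}\|\varphi^{j}\|_{\dot{H}^{s}}$ furnished by the annular frequency truncation, and a term-by-term triangle inequality over the finitely many profiles present at level $J$ --- is precisely the standard route, and your observation that one must (and may) avoid any off-critical Pythagorean identity is well taken.
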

\noindent Concerning the remaining term, we can show it tends to $0$, thanks to Lebesgue Theorem.
\begin{prop}\sl{
\label{reste perturb\'e petit}
 $$ \lim_{J \to +\infty} \lim_{\eta \to +\infty} \limsup_{n \to +\infty} \| \psi_{n,\eta}^{J}\|_{L^p} = 0.$$
}\end{prop}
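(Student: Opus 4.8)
The plan is to split $\psi_{n,\eta}^{J}$ into its two constitutive pieces and treat each separately, exploiting that the exponent $p=\frac{6}{3-2s}$ is precisely the one for which the scaling operator $\Lambda^{\frac{3}{p}}_{\lambda,x_{0}}$ leaves the $L^{p}$ norm invariant. Indeed, a direct change of variables gives $\|\Lambda^{\frac{3}{p}}_{\lambda,x_{0}}w\|_{L^{p}}=\|w\|_{L^{p}}$ for every $w$, independently of $\lambda$ and $x_{0}$. This is the key observation, since it decouples the $n$-dependence (carried only by the scales $\lambda_{n,j}$ and the cores $x_{n,j}$) from the size of the frequency-cut tails of the profiles.

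First I would write, for fixed $J$ and $\eta$,
$$\|\psi_{n,\eta}^{J}\|_{L^{p}}\leqslant\Bigl\|\sum_{\stackrel{j\in\mathcal{J}^{c}_{1}}{j\leqslant J}}\Lambda^{\frac{3}{p}}_{\lambda_{n,j},x_{n,j}}V^{j}_{{}^{c}\eta}\Bigr\|_{L^{p}}+\|\psi_{n}^{J}\|_{L^{p}}.$$
For the first term, since for fixed $J$ the index set is finite, the triangle inequality together with the scaling invariance of the $L^{p}$ norm yields
$$\Bigl\|\sum_{\stackrel{j\in\mathcal{J}^{c}_{1}}{j\leqslant J}}\Lambda^{\frac{3}{p}}_{\lambda_{n,j},x_{n,j}}V^{j}_{{}^{c}\eta}\Bigr\|_{L^{p}}\leqslant\sum_{\stackrel{j\in\mathcal{J}^{c}_{1}}{j\leqslant J}}\|V^{j}_{{}^{c}\eta}\|_{L^{p}},$$
a bound that no longer depends on $n$, so it survives $\limsup_{n\to+\infty}$ unchanged. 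Note that here one does not even need the asymptotic orthogonality of the profiles: the scaling invariance alone removes the sole source of $n$-dependence.

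Next I would let $\eta\to+\infty$ with $J$ fixed. For each fixed profile $V^{j}\in\dot{H}^{s}$, the complement piece $V^{j}_{{}^{c}\eta}$ carries the frequencies in $\{|\xi|<\frac{1}{\eta}\}\cup\{|\xi|>\eta\}$. Since $|\xi|^{2s}|\widehat{V^{j}}|^{2}$ is integrable and $\bigl(1-\mathbf{1}_{\{\frac{1}{\eta}\leqslant|\xi|\leqslant\eta\}}\bigr)^{2}\to0$ pointwise on $\R^{3}\setminus\{0\}$ as $\eta\to+\infty$, the Lebesgue dominated convergence theorem gives $\|V^{j}_{{}^{c}\eta}\|_{\dot{H}^{s}}\to0$, and the critical Sobolev embedding $\dot{H}^{s}\hookrightarrow L^{p}$ then yields $\|V^{j}_{{}^{c}\eta}\|_{L^{p}}\to0$. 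As the sum over $j\leqslant J$ is finite, the entire first term tends to $0$ as $\eta\to+\infty$, for every fixed $J$, while the remaining contribution $\|\psi_{n}^{J}\|_{L^{p}}$ does not depend on $\eta$.

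Finally I would take $\lim_{J\to+\infty}$. Collecting the previous estimates,
$$\lim_{J\to+\infty}\lim_{\eta\to+\infty}\limsup_{n\to+\infty}\|\psi_{n,\eta}^{J}\|_{L^{p}}\leqslant\lim_{J\to+\infty}\limsup_{n\to+\infty}\|\psi_{n}^{J}\|_{L^{p}}=0,$$
the last equality being exactly the defining property of the error term $\psi_{n}^{J}$ in the profile decomposition of Theorem \ref{theo profiles}. The only delicate point is the ordering of the three limits: the scaling invariance must be used to render the profile-tail term $n$-independent \emph{before} sending $\eta\to+\infty$, and the genuine smallness (the vanishing of $\psi_{n}^{J}$) is supplied entirely by the profile theorem in the outermost limit. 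There is no real analytic obstacle beyond this bookkeeping; the substance lies solely in the $L^{p}$-invariance of $\Lambda^{\frac{3}{p}}$ and the dominated convergence argument.
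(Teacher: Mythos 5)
Your proof is correct and matches the route the paper indicates: the paper defers the details to \cite{P} but points to the Lebesgue (dominated convergence) theorem as the key step, which is exactly how you kill the frequency tails $V^{j}_{{}^{c}\eta}$ in $\dot{H}^{s}\hookrightarrow L^{p}$ after using the $L^{p}$-isometry of $\Lambda^{\frac{3}{p}}_{\lambda_{n,j},x_{n,j}}$ to remove the $n$-dependence, leaving only the profile-theorem smallness of $\psi_{n}^{J}$ in the outer limit. Nothing is missing.
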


\noindent \textit{Continuation of Proof of Lemma \ref{lemme allure de la solution}.} By vertue of (\ref{decomposition de la solution}) in Lemma \ref{lemme allure de la solution}, it seems clear that for any~$t < \tilde{T}$
\begin{equation*}
 \begin{split}
 \label{Pyt 2}
  \| NS(v_{0,n})(t,\cdot) \|^2_{\dot{H}^s} &= \Bigl\| \sum_{j \in \mathcal{J}_{1}} NS(\varphi^{j})(t,\cdot-x_{n,j}) \Bigr\|^{2}_{\dot{H}^s} + \Bigl\| e^{t\Delta}\Bigl( \sum_{\stackrel{j \in \mathcal{J}^{{}{c}}_{1}}{j \leqslant J}} \Lambda^{\frac{3}{p}}_{\lambda_{n,j},x_{n,j}}\varphi^{j}(x) + \psi_{n}^{J}\Bigr)\Bigr\|^{2}_{\dot{H}^s}\\ &+ \| R_{n}^{J}(t,\cdot) \|^{2}_{\dot{H}^s}\,
  + 2\, \Bigl( \sum_{j \in \mathcal{J}_{1}}  NS(\varphi^{j})(t,\cdot-x_{n,j}) \mid e^{t\Delta}\Bigl( \sum_{\stackrel{j \in \mathcal{J}^{{}{c}}_{1}}{j \leqslant J}} \Lambda^{\frac{3}{p}}_{\lambda_{n,j},x_{n,j}}\varphi^{j}(x) + \psi_{n}^{J}\Bigr)\Bigr)_{\dot{H}^s}\\& + 2\, \Bigl( \sum_{j \in \mathcal{J}_{1}}  NS(\varphi^{j})(t,\cdot-x_{n,j}) \mid R_{n}^{J} \Bigr)_{\dot{H}^s} 
  + 2\, \Bigl( e^{t\Delta}\Bigl( \sum_{\stackrel{j \in \mathcal{J}^{{}{c}}_{1}}{j \leqslant J}} \Lambda^{\frac{3}{p}}_{\lambda_{n,j},x_{n,j}}\varphi^{j}(x) + \psi_{n}^{J}\Bigr) \,  \mid \, R_{n}^{J} \Bigr)_{\dot{H}^s}.
 \end{split}
\end{equation*}
\noindent Therefore, proving (\ref{Pythagore}) is equivalent to prove Propositions \ref{prop orthogonali\'e profiles echelle 1} and \ref{prop orthogonali\'e profiles echelle 0 et infini} below. Both of them essentially stem from the orthogonality of cores and a compactness argument. 
\begin{prop}\sl{
\noindent Let $\varepsilon >0$. Then, for any $t \in [0,\tilde{T}-\varepsilon] $, 
\label{prop orthogonali\'e profiles echelle 1}
\begin{equation}
 \Bigl\| \, \sum_{j \in \mathcal{J}_{1}}  NS(\varphi^{j})(t,\cdotp - x_{n,j}) \Bigr\|^{2}_{\dot{H}^s} =   \sum_{j \in \mathcal{J}_{1}} \bigl \| NS(\varphi^{j})(t,\cdotp) \bigr\|^{2}_{\dot{H}^s} + \gamma_{n,\varepsilon}(t),
\end{equation}
with $\ds{\lim_{n \to +\infty} \sup_{t \in [0,\tilde{T}-\varepsilon]} \vert \gamma_{n,\varepsilon}(t)  \vert =0.}$
}\end{prop}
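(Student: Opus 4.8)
\noindent The plan is to expand the square on the left and show that every off-diagonal contribution vanishes in the limit $n\to+\infty$, uniformly on the compact interval $[0,\tilde{T}-\varepsilon]$. Writing $h_{j}(t)\eqdefa NS(\varphi^{j})(t,\cdotp)$ and using that the $\dot{H}^s$-norm is invariant under translation, the diagonal terms immediately produce $\sum_{j\in\mathcal{J}_{1}}\|h_{j}(t)\|^{2}_{\dot{H}^s}$, so that $\gamma_{n,\varepsilon}(t)$ is exactly twice the sum, over pairs $j\neq k$, of the scalar products $\bigl(h_{j}(t)(\cdotp-x_{n,j})\mid h_{k}(t)(\cdotp-x_{n,k})\bigr)_{\dot{H}^s}$. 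Everything thus reduces to controlling these cross products.

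\noindent Fix first a pair $j\neq k$ in $\mathcal{J}_{1}$. Since all these profiles have constant scale $\lambda_{n,j}\equiv 1$, the orthogonality of cores in Theorem \ref{theo profiles} forces $|x_{n,j}-x_{n,k}|\to+\infty$. By translation invariance of the scalar product, $\bigl(h_{j}(t)(\cdotp-x_{n,j})\mid h_{k}(t)(\cdotp-x_{n,k})\bigr)_{\dot{H}^s}=\bigl(h_{j}(t)\mid h_{k}(t)(\cdotp-z_{n})\bigr)_{\dot{H}^s}$ with $z_{n}\eqdefa x_{n,k}-x_{n,j}$, and a Plancherel computation rewrites it as $\int_{\R^3}|\xi|^{2s}\,\widehat{h_{j}(t)}(\xi)\,\overline{\widehat{h_{k}(t)}(\xi)}\,e^{iz_{n}\cdotp\xi}\,d\xi$. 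The integrand without the oscillating factor is the product of the two $L^2$ functions $|\xi|^{s}\widehat{h_{j}(t)}$ and $|\xi|^{s}\overline{\widehat{h_{k}(t)}}$, hence lies in $L^1(\R^3)$ by Cauchy-Schwarz, with norm bounded by $\|h_{j}(t)\|_{\dot{H}^s}\|h_{k}(t)\|_{\dot{H}^s}$. The Riemann-Lebesgue lemma then gives decay as $|z_{n}|\to+\infty$ for each fixed $t$; the point is to make this uniform in time.

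\noindent The uniformity follows from a compactness argument. Since $NS(\varphi^{j})$ belongs to $C([0,\tilde{T}-\varepsilon],\dot{H}^s)$, the map $t\mapsto F_{t}\eqdefa|\xi|^{2s}\,\widehat{h_{j}(t)}\,\overline{\widehat{h_{k}(t)}}$ is continuous from the compact interval $[0,\tilde{T}-\varepsilon]$ into $L^1(\R^3)$, so that $\{F_{t}\}_{t}$ is a compact, hence totally bounded, subset of $L^1$. Covering it by finitely many small $L^1$-balls and applying Riemann-Lebesgue to the finitely many centres yields $\ds{\sup_{t\in[0,\tilde{T}-\varepsilon]}\bigl|\widehat{F_{t}}(z_{n})\bigr|\to 0}$ as $|z_{n}|\to+\infty$. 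This shows that each cross product tends to $0$ uniformly in $t$, and a fortiori that any \emph{finite} sum of them does.

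\noindent It remains to pass from finitely many profiles to the whole, possibly infinite, family $\mathcal{J}_{1}$. Here I would split $\sum_{j\in\mathcal{J}_{1}}$ into a head of the first $N$ indices and a tail, and exploit the orthogonality of the data (\ref{ortho de la norme}), which guarantees $\sum_{j}\|\varphi^{j}\|^{2}_{\dot{H}^s}<+\infty$: for $j$ large enough $\|\varphi^{j}\|_{\dot{H}^s}$ falls below the small-data threshold, so $NS(\varphi^{j})$ is globally controlled by its data in $X^{s}_{\tilde{T}-\varepsilon}$ and the tail $\sum_{j>N}\sup_{t}\|h_{j}(t)\|^{2}_{\dot{H}^s}$ can be made arbitrarily small. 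One then estimates the tail contribution to $\bigl\|\sum_{j}h_{j}(t)(\cdotp-x_{n,j})\bigr\|^{2}_{\dot{H}^s}$ and the head-tail cross products, letting first $n\to+\infty$ (to kill the finitely many head-head terms by the preceding step) and then $N\to+\infty$. This uniform tail control is the delicate point of the argument: the finitely many head-head interactions are handled by the Riemann-Lebesgue mechanism above, but the infinitely many terms involving the tail must be absorbed by the smallness coming from (\ref{ortho de la norme}) together with the stability estimates for $NS$, which is where the real work lies.
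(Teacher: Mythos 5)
Your proof is correct and follows essentially the same architecture as the paper's: expand the square, isolate the cross terms $\bigl(NS(\varphi^{j})(t,\cdot-x_{n,j})\mid NS(\varphi^{k})(t,\cdot-x_{n,k})\bigr)_{\dot{H}^s}$, and obtain uniformity in $t$ by observing that the trajectory $t\mapsto NS(\varphi^{j})(t)$ is continuous on the compact interval $[0,\tilde{T}-\varepsilon]$, hence has totally bounded image, which one covers by finitely many small balls before passing to the limit in $n$. The only real divergence is in how a single cross term is killed as $|x_{n,j}-x_{n,k}|\to+\infty$: you pass to the Fourier side and invoke Riemann--Lebesgue for the $L^1$ function $|\xi|^{2s}\,\widehat{h_{j}(t)}\,\overline{\widehat{h_{k}(t)}}$ (in your notation), whereas the paper approximates $\Lambda^{s}NS(\varphi^{j})(t)$ in $L^2$ by elements of $\mathcal{D}(\R^3)$ and uses that translates of compactly supported functions have vanishing overlap in the limit (Lebesgue's theorem); the two mechanisms are interchangeable and of equal cost, and your uniform-in-$t$ version of Riemann--Lebesgue via a finite $L^1$-covering is sound. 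Your closing concern about an infinite family $\mathcal{J}_{1}$ is legitimate but not one the paper confronts: its proof takes the union $K_{\varepsilon}^{J}$ over finitely many indices $j\leqslant J$ only, so compactness, the finite covering, and the finiteness of the sum over pairs are immediate there. If one wanted the statement for all of $\mathcal{J}_{1}$ at once, the tail control you sketch (almost all profiles fall below the small-data threshold by the orthogonality (\ref{ortho de la norme}) of the $\dot{H}^s$-norms of the data, hence their solutions are globally small in $X^{s}_{T}$) is indeed the right supplement; as you acknowledge, that step is only sketched and would need to be carried out in detail, but it is additional to, not required by, what the paper actually proves.
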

\begin{proof}
Once again, we developp the square of $\ds{\dot{H}^s}$-norm and we get for any $\ds{t < \tilde{T}}$ 
\begin{equation*}
 \begin{split}
 &\Bigl\| \sum_{j \in \mathcal{J}_{1}} NS(\varphi^{j})(t, \cdotp-x_{n,j}) \Bigr\|^{2}_{\dot{H}^s}
  = \sum_{j \in \mathcal{J}_{1}} \bigl\|  NS(\varphi^{j})(t, \cdotp-x_{n,j}) \bigr\|^{2}_{\dot{H}^s}\\ &\qquad \qquad \qquad \qquad \qquad \qquad \qquad  + 2\, \sum_{\stackrel{(j,k) \in \mathcal{J}_{1} \times \mathcal{J}_{1}}{j \neq k}} \left( \Lambda^s\, NS(\varphi^{j})(t, \cdotp-x_{n,j}) \mid \Lambda^s\, NS(\varphi^{k})(t, \cdotp-x_{n,k}) \right)_{L^2},\\
   \end{split}
\end{equation*}
\noindent where $\Lambda = \sqrt{-\Delta}$. 
\noindent Let $\varepsilon>0$. Then, for any t in $[0, \widetilde{T} - \varepsilon]$, we get
\begin{equation*}
 \begin{split}
  \Bigl\| \sum_{j \in \mathcal{J}_{1}} NS(\varphi^{j})(t, \cdotp-x_{n,j}) \Bigr\|^{2}_{\dot{H}^s}
  &= \sum_{j \in \mathcal{J}_{1}} \bigl\|  NS(\varphi^{j})(t, \cdotp)   \bigr\|^{2}_{\dot{H}^s}  + 2\, \sum_{\stackrel{(j,k) \in \mathcal{J}_{1} \times \mathcal{J}_{1}}{j \neq k}} \Gamma^{s,j,k}_{\varepsilon,n},\\
 \end{split}
\end{equation*}
\noindent where $\ds{\Gamma^{s,j,k}_{\varepsilon,n}
 \eqdefa \left( \Lambda^s\, NS(\varphi^{j})(t, \cdotp-x_{n,j}) \mid \Lambda^s\, NS(\varphi^{k})(t, \cdotp-x_{n,k}) \right)_{L^2}}$.\\

\noindent We denote by 
$$K_{\varepsilon}^{J} \eqdefa \bigcup_{j \in J}\, \Lambda^s \,NS(\varphi^{j})([0, \widetilde{T} - \varepsilon]).$$
\noindent By vertue of the continuity of the map~$\ds{ t \in [0, \widetilde{T} - \varepsilon] \mapsto  \Lambda^s \,NS(\varphi^{j})(t, \cdotp)\, \in L^2 }$, we deduce that $K_{\varepsilon}^{J}$ is compact (and thus precompact) in $L^2$. It means that it can be covered by a finite open ball with an arbitrarily radius $\alpha >0$. Let $\alpha$ be a positive radius. There exists an integer~$N_{\alpha}$, and there exists~$(\theta_{\ell})_{1 \leqslant \ell \leqslant N_{\alpha}}$ some elements of $\mathcal{D}(\R^3)$, such that
\begin{equation}
\ds{K_{\varepsilon}^{J} \subset \bigcup_{\ell =1}^{ N_{\alpha}} \, B(\theta_{\ell},\alpha)}.
\end{equation}
\noindent Let us come back to the proof of \ref{prop orthogonali\'e profiles echelle 1}. Thanks to the previous remark, we approach each profil~$\Lambda^s \,NS(\varphi^{j})(t, \cdotp)$ (resp. $\Lambda^s \,NS(\varphi^{k})(t, \cdotp)$) by a smooth function: e.g there exists a integer~$\ell \in \{1,\cdotp\cdotp\cdotp N_{\alpha} \} $ and there exists a function $\theta_{\ell(j,t)}$ (resp.~$\theta_{\ell(k,t)}$) in $\mathcal{D}(\R^3)$  and we get 
\begin{equation}
\begin{split}
\Gamma^{s,j,k}_{\varepsilon,n} &= \left( \Lambda^s\, NS(\varphi^{j})(t, \cdotp-x_{n,j}) - \theta_{\ell(j,t)}(\cdotp-x_{n,j}) \mid \Lambda^s\, NS(\varphi^{k})(t, \cdotp-x_{n,k}) - \theta_{\ell(k,t)}(\cdotp-x_{n,k}) \right)_{L^2}\\
&+ \left( \Lambda^s\, NS(\varphi^{j})(t, \cdotp-x_{n,j}) - \theta_{\ell(j,t)}(\cdotp-x_{n,j}) \mid  \theta_{\ell(k,t)}(\cdotp-x_{n,k})  \right)_{L^2}\\
&+ \left( \theta_{\ell(j,t)}(\cdotp-x_{n,j})  \mid \Lambda^s\, NS(\varphi^{k})(t, \cdotp-x_{n,k}) - \theta_{\ell(k,t)}(\cdotp-x_{n,k}) \right)_{L^2}\\
&+ \left(  \theta_{\ell(j,t)}(\cdotp-x_{n,j})  \mid  \theta_{\ell(k,t)}(\cdotp-x_{n,k}) \right)_{L^2}.\\
\end{split}
\end{equation}
\noindent The three first terms in the right-hand side of the above estimate tend uniformly (in time) to $0$, by vertue of Cauchy-Schwarz and the translation-invariance of the $\dot{H}^s$-norm (we just perform the estimate for the first term, the others are similar). For any $t \in [0,\tilde{T}-\varepsilon]$
\begin{equation}
\begin{split}
\Bigl( \Lambda^s\, NS(\varphi^{j})(t, \cdotp-x_{n,j}) - \theta_{\ell(j,t)}(\cdotp-x_{n,j}) &\mid \Lambda^s\, \bigl( NS(\varphi^{k})(t, \cdotp-x_{n,k}) - \theta_{\ell(k,t)}(\cdotp-x_{n,k}) \bigl)\Bigr)_{L^2}\\ &\leqslant \| \Lambda^s\,  NS(\varphi^{j})(t) - \theta_{\ell(j,t)} \|_{L^2} \,\, \| \Lambda^s\,  NS(\varphi^{k})(t) - \theta_{\ell(k,t)} \|_{L^2}\\
&\leqslant \alpha^2.
\end{split}
\end{equation}
\noindent Therefore, for any $\alpha >0$, we have
\begin{equation}
\sup_{t \in [0,\tilde{T}-\varepsilon]} \, \left( \Lambda^s\, NS(\varphi^{j})(t, \cdotp-x_{n,j}) - \theta_{\ell(j,t)}(\cdotp-x_{n,j}) \mid \Lambda^s\, NS(\varphi^{k})(t, \cdotp-x_{n,k}) - \theta_{\ell(k,t)}(\cdotp-x_{n,k}) \right)_{L^2} \leqslant \alpha^2. 
\end{equation}
\noindent For the last term $\ds{\left(  \theta_{\ell(j,t)}(\cdotp-x_{n,j})  \mid \,\theta_{\ell(k,t)}(\cdotp-x_{n,k}) \right)_{L^2}}$, we have 
$$ \left(  \theta_{\ell(j,t)}(\cdotp-x_{n,j})  \mid \,\theta_{\ell(k,t)}(\cdotp-x_{n,k}) \right)_{L^2} = \int_{\R^3}\, \theta_{\ell(j,t)}(x)\, \theta_{\ell(k,t)}(x + x_{n,j} - x_{n,k})\, dx.$$
\noindent It follows immediately that the above term tends to $0$, when $n$ tend to $+\infty$, by vertue of Lebesgue theorem combining with the orthogonality property of cores(e.g. $\ds{\lim_{n \to \infty} |x_{n,j}-x_{n,k}| = +\infty}$). 
\noindent To sum up, we have proved that $\Gamma^{s,j,k}_{\varepsilon,n}$ tends to $0$ when $n$ tends to $+\infty$, uniformly in time. This concludes the proof of Proposition \ref{prop orthogonali\'e profiles echelle 1}.

\end{proof}
\noindent Concerning the crossed-terms in the profile decomposition, we have to prove they are also negligable, uniformly in time. That is the point in the following proposition.  
\begin{prop}\sl{
\noindent Let $\varepsilon>0$, We denote by 
$$ I_{n}(t,\cdotp) \eqdefa \Bigl( \sum_{j \in \mathcal{J}_{1}}  NS(\varphi^{j})(t,\cdot-x_{n,j}) \mid e^{t\Delta}\Bigl( \sum_{\stackrel{j \in \mathcal{J}^{{}{c}}_{1}}{j \leqslant J}} \Lambda^{\frac{3}{p}}_{\lambda_{n,j},x_{n,j}}\varphi^{j}(x) + \psi_{n}^{J}\Bigr)\Bigr)_{\dot{H}^s},$$
\label{prop orthogonali\'e profiles echelle 0 et infini}
\begin{equation}
\label{estimate5}
\begin{split}
\hbox{then, one has} \quad \lim_{J \to +\infty}\lim_{\eta \to +\infty} \lim_{n \to +\infty} \, \sup_{t \in [0, \tilde{T}- \varepsilon] } \, I_{n}(t,\cdotp) = 0,\\
\end{split}
\end{equation}
\begin{equation}
\label{estimate6}
\begin{split}
\lim_{J \to +\infty} \lim_{n \to +\infty} \, \sup_{t \in [0, \tilde{T}- \varepsilon] }\Bigl( \sum_{j \in \mathcal{J}_{1}}  NS(\varphi^{j})(t,\cdot-x_{n,j}) \mid R_{n}^{J}(t) \Bigr)_{\dot{H}^s}  = 0,
\end{split}
\end{equation}
\begin{equation}
\label{estimate7}
\begin{split}
\lim_{J \to +\infty}\lim_{\eta \to +\infty} \lim_{n \to +\infty} \, \sup_{t \in [0, \tilde{T}- \varepsilon] } \, \Bigl( e^{t\Delta}\Bigl( \sum_{\stackrel{j \in \mathcal{J}^{{}{c}}_{1}}{j \leqslant J}} \Lambda^{\frac{3}{p}}_{\lambda_{n,j},x_{n,j}}\varphi^{j}(x) + \psi_{n}^{J}\Bigr)\mid R_{n}^{J}(t) \Bigr)_{\dot{H}^s} = 0.
\end{split} 
\end{equation}
}\end{prop}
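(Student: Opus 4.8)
The plan is as follows. The three inner products in \ref{estimate5}, \ref{estimate6} and \ref{estimate7} are precisely the crossed terms arising when one expands $\|NS(v_{0,n})(t)\|^2_{\dot{H}^s}$ from the decomposition \ref{decomposition de la solution}; once each of them is shown to be negligible uniformly on $[0,\tilde{T}-\varepsilon]$, the orthogonality identity \ref{Pythagore} follows by combining them with Proposition \ref{prop orthogonali\'e profiles echelle 1}. Set $A_{n}(t)\eqdefa\sum_{j\in\mathcal{J}_{1}}NS(\varphi^{j})(t,\cdotp-x_{n,j})$ and $S_{n}^{J}\eqdefa\sum_{j\in\mathcal{J}^{c}_{1},\,j\leqslant J}\Lambda^{\frac{3}{p}}_{\lambda_{n,j},x_{n,j}}\varphi^{j}+\psi_{n}^{J}$. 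I would dispose of \ref{estimate6} and \ref{estimate7} first, since they reduce to a plain Cauchy--Schwarz inequality in $\dot{H}^{s}$. Indeed $|(A_{n},R_{n}^{J})_{\dot{H}^s}|\leqslant\|A_{n}\|_{\dot{H}^s}\|R_{n}^{J}\|_{\dot{H}^s}$, and Proposition \ref{prop orthogonali\'e profiles echelle 1} together with the summability $\sum_{j}\|\varphi^{j}\|^2_{\dot{H}^s}<\infty$ (which forces all but finitely many $\varphi^{j}$ to generate global, uniformly controlled solutions) bounds $\sup_{t\leqslant\tilde{T}-\varepsilon}\|A_{n}(t)\|_{\dot{H}^s}$; since $\sup_{t}\|R_{n}^{J}(t)\|_{\dot{H}^s}\leqslant\|R_{n}^{J}\|_{X^{s}_{T}}\to0$ in the iterated limit, \ref{estimate6} follows. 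For \ref{estimate7} I would use the contractivity of the heat flow, $\|e^{t\Delta}S_{n}^{J}\|_{\dot{H}^s}\leqslant\|S_{n}^{J}\|_{\dot{H}^s}$, which is bounded thanks to \ref{ortho de la norme}, and conclude again from $\|R_{n}^{J}\|_{X^{s}_{T}}\to0$.

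The core of the work is \ref{estimate5}. Using \ref{decomposition apres regularisation} I would split $S_{n}^{J}=U^{0}_{n,\eta}+U^{\infty}_{n,\eta}+\psi_{n,\eta}^{J}$, so that $I_{n}(t)$ becomes the sum of three inner products. After truncating the (possibly infinite) family $\mathcal{J}_{1}$ to finitely many indices — the tail being controlled by Cauchy--Schwarz against the bounded factor $\|e^{t\Delta}S_{n}^{J}\|_{\dot{H}^s}$ — I would borrow the compactness device of Proposition \ref{prop orthogonali\'e profiles echelle 1}: for each fixed $j\in\mathcal{J}_{1}$ the trajectory $\{\Lambda^{s}NS(\varphi^{j})(t,\cdotp):t\in[0,\tilde{T}-\varepsilon]\}$ is precompact in $L^{2}$, hence uniformly approximable to within $\alpha$ by a finite family of functions $\theta_{\ell}\in\mathcal{D}(\R^{3})$. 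The approximation errors contribute at most $C\alpha\,\|e^{t\Delta}S_{n}^{J}\|_{\dot{H}^s}\lesssim\alpha$, uniformly in $t$, and are disposed of at the end by letting $\alpha\to0$.

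It then remains to estimate $(\theta_{\ell},\Lambda^{s}e^{t\Delta}W)_{L^2}$ for $W\in\{U^{0}_{n,\eta},U^{\infty}_{n,\eta},\psi_{n,\eta}^{J}\}$, where the gain comes from shifting derivatives onto the smooth factor $\theta_{\ell}$. For the small scales I would write, with $\delta_{0}>0$ small, $(\theta_{\ell},\Lambda^{s}e^{t\Delta}U^{0}_{n,\eta})_{L^2}=(\Lambda^{\delta_{0}}\theta_{\ell},\Lambda^{s-\delta_{0}}e^{t\Delta}U^{0}_{n,\eta})_{L^2}$ and bound it by $\|\Lambda^{\delta_{0}}\theta_{\ell}\|_{L^2}\|U^{0}_{n,\eta}\|_{\dot{H}^{s-\delta_{0}}}$, which tends to $0$ as $n\to+\infty$ by Proposition \ref{smallbigscaling} applied with $s_{1}=s-\delta_{0}<s$. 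Symmetrically, for the large scales, $(\theta_{\ell},\Lambda^{s}e^{t\Delta}U^{\infty}_{n,\eta})_{L^2}=(\Lambda^{-\delta_{0}}\theta_{\ell},\Lambda^{s+\delta_{0}}e^{t\Delta}U^{\infty}_{n,\eta})_{L^2}$ is bounded by $\|\Lambda^{-\delta_{0}}\theta_{\ell}\|_{L^2}\|U^{\infty}_{n,\eta}\|_{\dot{H}^{s+\delta_{0}}}\to0$ by Proposition \ref{smallbigscaling} with $s_{2}=s+\delta_{0}>s$ (note $\Lambda^{-\delta_{0}}\theta_{\ell}\in L^{2}$ since $\theta_{\ell}$ is Schwartz and $\delta_{0}<\frac{3}{2}$). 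Finally, for the remainder I would move all derivatives onto $\theta_{\ell}$ and use Hölder with $p=\frac{6}{3-2s}$: $(\theta_{\ell},\Lambda^{s}e^{t\Delta}\psi_{n,\eta}^{J})_{L^2}=(\Lambda^{s}\theta_{\ell},e^{t\Delta}\psi_{n,\eta}^{J})_{L^2}\leqslant\|\Lambda^{s}\theta_{\ell}\|_{L^{p'}}\|\psi_{n,\eta}^{J}\|_{L^{p}}$, the $L^{p}$-boundedness of $e^{t\Delta}$ and Proposition \ref{reste perturb\'e petit} yielding smallness in the iterated limit. Taking the limits in the prescribed order $\lim_{J}\lim_{\eta}\lim_{n}$ (first $n\to+\infty$ annihilates the scale terms, then $\eta,J\to+\infty$ annihilate the remainder) and finally $\alpha\to0$ gives \ref{estimate5}.

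The main obstacle I expect is exactly \ref{estimate5}: one must reconcile three incompatible senses of smallness — the low-regularity smallness of the small-scale profiles, the high-regularity smallness of the large-scale profiles, and the $L^{p}$-smallness of the remainder — within a single $\dot{H}^{s}$ pairing, and do so uniformly in $t\in[0,\tilde{T}-\varepsilon]$ and over the a priori infinite family $\mathcal{J}_{1}$. The smooth-approximation-plus-derivative-shifting mechanism is what renders these three regimes simultaneously exploitable and the whole estimate uniform in time; keeping careful track of the order of the limits in $n$, $\eta$, $J$ and $\alpha$ is the delicate bookkeeping point.
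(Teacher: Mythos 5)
Your proposal follows essentially the same route as the paper: Cauchy--Schwarz plus $\|R_{n}^{J}\|_{X^{s}_{T}}\to 0$ for (\ref{estimate6}) and (\ref{estimate7}), and for (\ref{estimate5}) the same compactness-in-time approximation of the trajectories $NS(\varphi^{j})$ by finitely many smooth functions, followed by the splitting $U^{0}_{n,\eta}+U^{\infty}_{n,\eta}+\psi^{J}_{n,\eta}$ and the same three mechanisms (duality at regularity $s_{1}<s$ for small scales, $s_{2}>s$ for large scales, H\"older in $L^{p}\times L^{p'}$ for the remainder, via Propositions \ref{smallbigscaling} and \ref{reste perturb\'e petit}). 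The argument is correct and matches the paper's proof in all essential respects.
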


\begin{proof}
\noindent Let us start by proving (\ref{estimate5}). We shall use once again an approximation argument. Let us define $$\Lambda_{\varepsilon}^{J} \eqdefa \bigcup_{j \in J}\,  \,NS(\varphi^{j})([0, \widetilde{T} - \varepsilon]).$$
\noindent By vertue of the continuity of the map~$\ds{ t \in [0, \widetilde{T} - \varepsilon] \mapsto  NS(\varphi^{j})(t, \cdotp)\, \in \dot{H}^s }$, we deduce that $\Lambda_{\varepsilon}^{J}$ is compact (and thus precompact) in $\dot{H}^s$. It means that it can be covered by a finite open ball with an arbitrarily radius $\beta >0$. Let $\beta$ be a positive radius. There exists an integer~$N_{\beta}$, and there exists~$(\chi_{\ell})_{1 \leqslant \ell \leqslant N_{\beta}}$ some elements of $\mathcal{D}(\R^3)$, such that
\begin{equation}
\ds{\Lambda_{\varepsilon}^{J} \subset \bigcup_{\ell =1}^{ N_{\beta}} \, B(\chi_{\ell},\beta)}.
\end{equation}
\noindent Let us come back to the proof of (\ref{estimate5}). Same arguments as previously imply there exists an integer~$\ell \in \{  1 \cdotp \cdotp \cdotp N_{\beta} \}$ and a smooth function~$\chi_{\ell(t,j)}$ in $\mathcal{D}(\R^3)$ such that 
\begin{equation}
\begin{split}
I_{n}(t,\cdotp) &\eqdefa \Bigl( \sum_{j \in \mathcal{J}_{1}}  NS(\varphi^{j})(t,\cdot-x_{n,j}) \mid e^{t\Delta}\Bigl( \sum_{\stackrel{j \in \mathcal{J}^{{}{c}}_{1}}{j \leqslant J}} \Lambda^{\frac{3}{p}}_{\lambda_{n,j},x_{n,j}}\varphi^{j} + \psi_{n}^{J}\Bigr)\Bigr)_{\dot{H}^s}\\
&= \Bigl( \sum_{j \in \mathcal{J}_{1}}  NS(\varphi^{j})(t,\cdot-x_{n,j}) - \chi_{\ell(t,j)}(\cdotp - x_{n,j}) \mid e^{t\Delta}\Bigl( \sum_{\stackrel{j \in \mathcal{J}^{{}{c}}_{1}}{j \leqslant J}} \Lambda^{\frac{3}{p}}_{\lambda_{n,j},x_{n,j}}\varphi^{j}+ \psi_{n}^{J}\Bigr)\Bigr)_{\dot{H}^s}\\
&\qquad+ \Bigl( \sum_{j \in \mathcal{J}_{1}}  \chi_{\ell(t,j)}(\cdotp - x_{n,j}) \mid e^{t\Delta}\Bigl( \sum_{\stackrel{j \in \mathcal{J}^{{}{c}}_{1}}{j \leqslant J}} \Lambda^{\frac{3}{p}}_{\lambda_{n,j},x_{n,j}}\varphi^{j} + \psi_{n}^{J}\Bigr)\Bigr)_{\dot{H}^s}.
\end{split}
\end{equation}
\noindent As $\ds{ \bigl\| e^{t\Delta}\Bigl( \sum_{\stackrel{j \in \mathcal{J}^{{}{c}}_{1}}{j \leqslant J}} \Lambda^{\frac{3}{p}}_{\lambda_{n,j},x_{n,j}}\varphi^{j} + \psi_{n}^{J}\Bigr) \bigr\|_{\dot{H}^s}  \leqslant \|  v_{0,n} \|_{\dot{H}^s}    }$, we infer that 
\begin{equation}
\begin{split}
I_{n}(t,\cdotp) &\leqslant  |\mathcal{J}_{1}|\, \beta \, \bigl\| v_{0,n}  \bigr\|_{\dot{H}^s} +\, \Bigl( \sum_{j \in \mathcal{J}_{1}}  \chi_{\ell(t,j)}(\cdotp - x_{n,j}) \mid e^{t\Delta}\Bigl( \sum_{\stackrel{j \in \mathcal{J}^{{}{c}}_{1}}{j \leqslant J}} \Lambda^{\frac{3}{p}}_{\lambda_{n,j},x_{n,j}}\varphi^{j} + \psi_{n}^{J}\Bigr)\Bigr)_{\dot{H}^s}\\
\end{split}
\end{equation}

\noindent Concerning the second part of above inequality, we shall use the splitting with respect to the parameter of cut off $\eta$. We refer the reader to the beginning of this section for notations.
$$ \Bigl( \sum_{j \in \mathcal{J}_{1}}  \chi_{\ell(t,j)}(\cdotp - x_{n,j}) \mid e^{t\Delta}\Bigl( \sum_{\stackrel{j \in \mathcal{J}^{{}{c}}_{1}}{j \leqslant J}} \Lambda^{\frac{3}{p}}_{\lambda_{n,j},x_{n,j}}\varphi^{j} + \psi_{n}^{J}\Bigr)\Bigr)_{\dot{H}^s} = I^{1}_{n}(t,\cdotp) + I^{2}_{n}(t,\cdotp) + I^{3}_{n}(t,\cdotp),$$
\begin{equation*}
\begin{split}
\hbox{where} \quad &I^{1}_{n}(t,\cdotp) =  \sum_{j \in \mathcal{J}_{1}}  \left(\chi_{\ell(t,j)}(\cdotp - x_{n,j}) \mid e^{t\Delta} U^{0}_{n,\eta}\right)_{\dot{H}^s}\quad \hbox{;} \quad I^{2}_{n}(t,\cdotp) = \sum_{j \in \mathcal{J}_{1}}  \left(\chi_{\ell(t,j)}(\cdotp - x_{n,j}) \mid e^{t\Delta} U^{\infty}_{n,\eta}\right)_{\dot{H}^s}\\&\qquad \qquad \qquad \qquad \hbox{and} \quad I^{3}_{n}(t,\cdotp) = \sum_{j \in \mathcal{J}_{1}}  \left(\chi_{\ell(t,j)}(\cdotp - x_{n,j}) \mid e^{t\Delta} \psi_{n, \eta}^{J}\right)_{\dot{H}^s}.
\end{split}
\end{equation*} 
\noindent Let us start with $I^{1}_{n}(t,\cdotp)$. One has
\begin{equation*}
\begin{split}
|I^{1}_{n}(t,\cdotp)| &\leqslant  |\mathcal{J}_{1}|\, \|  \chi_{\ell(t,j)}  \|_{\dot{H}^{2s-s_{1}}} \, \|  e^{t\Delta} U^{0}_{n,\eta} \|_{\dot{H}^{s_{1}}}\\
&\leqslant |\mathcal{J}_{1}|\,\|  \chi_{\ell(t,j)}  \|_{\dot{H}^{2s-s_{1}}} \, \|  U^{0}_{n,\eta}\|_{\dot{H}^{s_{1}}}.
\end{split}
\end{equation*}

\noindent Proposition \ref{smallbigscaling} (for $\eta$ and $j \in \mathcal{J}_{1}$ fixed) implies thus $\ds{\lim_{n \to +\infty} \, \sup_{t \in [0, \tilde{T}- \varepsilon] } |I^{1}_{n}(t,\cdotp)| = 0}$.\\
\noindent Concerning profiles with large scale, the proof is similar and we get for any $t \in [0, \tilde{T}- \varepsilon]$
\begin{equation}
\begin{split}
|I^{2}_{n}(t,\cdotp)| &\leqslant\,   |\mathcal{J}_{1}|\, \|  \chi_{\ell(t,j)} \|_{\dot{H}^{2s-s_{2}}} \,  \bigl\|  U^{\infty}_{n,\eta}(x)  \,\bigr\|_{\dot{H}^{s_{2}}}.
\end{split}
\end{equation}
\noindent Once again, Proposition \ref{smallbigscaling} implies the result : $\ds{\lim_{n \to +\infty} \, \sup_{t \in [0, \tilde{T}- \varepsilon] } |I^{2}_{n}(t,\cdotp)| = 0}$.\\ \noindent Concerning the last term $I^{3}_{n}$, H\"older inequality with $\ds{\frac{1}{p} + \frac{1}{p'} =1}$ yields
\begin{equation*}
\begin{split}
|I^{3}_{n}(t,\cdotp)|
&\leqslant  \bigl|\left(\Lambda^{2s}\,\chi_{\ell(t,j)} \mid e^{t\Delta} \psi_{n, \eta}^{J}(\cdotp + x_{n,j})\right)_{L^2}\bigr| \\
&\leqslant \| \Lambda^{2s}\, \chi_{\ell(t,j)} \|_{L^{p'}} \, \|  e^{t\Delta} \psi_{n, \eta}^{J}(\cdotp + x_{n,j}) \|_{L^p}  \bigr).\\
\end{split}
\end{equation*}
\noindent By translation invariance of the $L^p$-norm and estimate on the heat equation, we get 
\begin{equation}
\begin{split}
|I^{3}_{n}(t,\cdotp)| 
&\leqslant  \| \Lambda^{2s}\, \chi_{\ell(t,j)} \|_{L^{p'}} \, \|  \psi_{n, \eta}^{J}\|_{L^p}.
\end{split}
\end{equation}
\noindent Obviously the term $\ds{\| \psi_{n, \eta}^{J}\|_{\dot{H}^s}}$ is bounded by profiles hypothesis and the term $\ds{\| \Lambda^{2s}\, \chi \chi_{\ell(t,j)} \|_{L^{p'}}}$ is bounded too, since the function $\chi$ is as regular as we need. By vertue of Proposition \ref{reste perturb\'e petit}, the term $\ds{\|  \psi_{n, \eta}^{J}\|_{L^p}}$ is small in the sense of for any $\varepsilon >0$, there exists an integer $N_{0} \in N$, such that for any $n \geqslant N_{0}$, there exists $\tilde{\eta} >0$ and $\tilde{J} \geqslant 0$, such that for any $\eta \geqslant \tilde{\eta}$ and for any $J \geqslant \tilde{J}$, we have $\ds{\|  \psi_{n, \eta}^{J}\|_{L^p} \leqslant \varepsilon}$. As a result, we get for any  $$  \lim_{J \to +\infty}\lim_{\eta \to +\infty} \lim_{n \to +\infty} \, \sup_{t \in [0, \tilde{T}- \varepsilon] } |I^{3}_{n}(t,\cdotp)| = 0.$$ This ends up the proof of estimate (\ref{estimate5}).

\medbreak 
\noindent Concerning the proof of (\ref{estimate6}) and (\ref{estimate7}), the proof is very close in both cases and relies on the fact that the error term $R_{n}^{J}$ tends to $0$ in the $L^{\infty}_{T}(\dot{H}^s)$-norm. For any $t \in [0, \tilde{T}- \varepsilon]$, we have
\begin{equation}
\begin{split}
\bigl| \bigl( \sum_{j \in \mathcal{J}_{1}}  NS(\varphi^{j})(t,\cdot-x_{n,j}) \mid R_{n}^{J} \bigr)_{\dot{H}^s} \bigr| &\leqslant \sum_{j \in \mathcal{J}_{1}} \bigl| \bigl(NS(\varphi^{j})(t,\cdotp) \mid R_{n}^{J}(t,\cdotp + x_{n,j}) \bigr)_{\dot{H}^s}    \bigr |\\
&\leqslant |\mathcal{J}_{1}|\, \| NS(\varphi^{j})(t,\cdotp) \|_{L^{\infty}_{T}(\dot{H}^s)} \,  \| R_{n}^{J}(t,\cdotp) \|_{L^{\infty}_{T}(\dot{H}^s)}. 
\end{split}
\end{equation} 
\noindent Obviously, the term $\ds{\| NS(\varphi^{j})(t,\cdotp) \|_{L^{\infty}_{T}(\dot{H}^s)}}$ is bounded since $t \in [0, \tilde{T}- \varepsilon]$. As a result, Lemma \ref{lemme allure de la solution} implies that
$$  \lim_{J \to +\infty} \lim_{n \to +\infty} \, \sup_{t \in [0, \tilde{T}- \varepsilon] } \mid\bigl( \sum_{j \in \mathcal{J}_{1}}  NS(\varphi^{j})(t,\cdot-x_{n,j}) \mid R_{n}^{J} \bigr)_{\dot{H}^s}\mid = 0.$$

\noindent As far as estimate (\ref{estimate7}) is concerned, the idea is the same. For any $t \in [0, \tilde{T}- \varepsilon]$,
\begin{equation}
\begin{split}
\bigl| \bigl( e^{t\Delta}\Bigl( \sum_{\stackrel{j \in \mathcal{J}^{{}{c}}_{1}}{j \leqslant J}} \Lambda^{\frac{3}{p}}_{\lambda_{n,j},x_{n,j}}\varphi^{j}(x) + \psi_{n}^{J}\Bigr)\mid R_{n}^{J} \bigr)_{\dot{H}^s} \bigr|  &\leqslant \bigl|\bigl( e^{t\Delta}\Bigl( \sum_{\stackrel{j \in \mathcal{J}^{{}{c}}_{1}}{j \leqslant J}} \Lambda^{\frac{3}{p}}_{\lambda_{n,j},x_{n,j}}\varphi^{j}(x) + \psi_{n}^{J}\Bigr) \mid R_{n}^{J} \bigr)_{\dot{H}^s}    \bigr |\\
&\leqslant \| e^{t\Delta}\Bigl( \sum_{\stackrel{j \in \mathcal{J}^{{}{c}}_{1}}{j \leqslant J}} \Lambda^{\frac{3}{p}}_{\lambda_{n,j},x_{n,j}}\varphi^{j}(x) + \psi_{n}^{J}\Bigr) \|_{L^{\infty}_{T}(\dot{H}^s)}  \| R_{n}^{J} \|_{L^{\infty}_{\tilde{T}- \varepsilon}(\dot{H}^s)} \\
&\leqslant \|U^{0}_{n,\eta} + U^{\infty}_{n,\eta} + \psi_{n, \eta}^{J}\|_{\dot{H}^s} \, \| R_{n}^{J} \|_{L^{\infty}_{\tilde{T}- \varepsilon}(\dot{H}^s)}.
\end{split}
\end{equation} 
\noindent Thanks to profile decomposition (\ref{decomposition apres regularisation}), we get 
\begin{equation}
\begin{split}
\|U^{0}_{n,\eta} + U^{\infty}_{n,\eta} + \psi_{n, \eta}^{J}\|^{2}_{\dot{H}^s} &\leqslant \| v_{0,n}  \|^{2}_{\dot{H}^s} + \circ(1).
\end{split}
\end{equation}
\noindent Thus, finally we get
\begin{equation}
\begin{split}
\bigl| \bigl( e^{t\Delta}\Bigl( \sum_{\stackrel{j \in \mathcal{J}^{{}{c}}_{1}}{j \leqslant J}} \Lambda^{\frac{3}{p}}_{\lambda_{n,j},x_{n,j}}\varphi^{j}(x) + \psi_{n}^{J}\Bigr)\mid R_{n}^{J} \bigr)_{\dot{H}^s} | &\leqslant  C\, \bigl( \bigl\| v_{0,n} \bigr\|^{2}_{\dot{H}^s} + \circ(1)  \bigr) \, \| R_{n}^{J} \|_{L^{\infty}_{\tilde{T}- \varepsilon}(\dot{H}^s)}.
\end{split}
\end{equation}
\noindent We end up the proof as before, thanks to the hypothesis on $R_{n}^{J}$. This completes the proof of Proposition~\ref{prop orthogonali\'e profiles echelle 0 et infini} and thus Lemma~\ref{lemme allure de la solution}. 
\end{proof}

\medbreak


\begin{thebibliography}{99}


\bibitem{BCD} H. Bahouri, J.-Y. Chemin, R. Danchin:  \textit{Fourier Analysis and Nonlinear Partial Differential Equations}, Springer, \textbf{343}, 2011.  

\bibitem{BCK} H. Bahouri, A. Cohen, G. Koch: A general wavelet-based profile decomposition in the critical embedding of function spaces, \textit{Confluentes Mathematici}, \textbf{3}, 2011, pages 1-25.

\bibitem{BGall} H. Bahouri and I. Gallagher: On the stability in weak topology of the set of global solutions to the Navier-Stokes equations, \textit{Archive for Rational Mechanics and Analysis}, \textbf{209}, 2013, pages 569-629.

\bibitem{BG} H. Bahouri, P. G\'erard: High frequency approximation of solutions to critical nonlinear wave equations, \textit{American Journal of Math}, \textbf{121}, 1999, pages 131-175.

\bibitem{BMM} H. Bahouri, M. Majdoub and N. Masmoudi: Lack of compactness in the 2D critical Sobolev embedding, the general case, to appear in \textit{Journal de Math\'ematiques Pures et Appliqu\'ees}. 



\bibitem{BC} H. Br\'ezis and J.-M. Coron: Convergence of solutions of H-Systems or how to blow bubbles, \textit{Archive for Rational Mechanics and Analysis}, \textbf{89}, 1985, pages 21-86. 



\bibitem{JY}  J.-Y. Chemin: Jean Leray et Navier-Stokes, \textit{Gazette des mathématiciens}, \textbf{84}, 2000, pages 7-82, supplément à la mémoire de Jean Leray.

\bibitem{JYC}  J.-Y. Chemin: Remarques sur l'existence globale pour le syst\'eme de Navier-Stokes incompressible, SIAM, \textit{Journal on Mathematical Analysis}, \textbf{23}, 1992, pages 20-28.

\bibitem{JYCc} J.-Y. Chemin: Th\'eor\`emes d'unicit\'e pour le syst\`eme de Navier-Stokes tridimensionnel, \textit{Journal d'Analyse Math\'ematique}, \textbf{77}, 1999, pages 27-50.

\bibitem{CG}  J.-Y. Chemin and I. Gallagher: Large, global solutions to the Navier-Stokes equations, slowly varying in one direction, \textit{Transactions of the American Mathematical Society}, \textbf{362}, 2010, pages 2859-2873.

\bibitem{CGg} J.-Y. Chemin, I. Gallagher:  Wellposedness and stability results for the Navier-Stokes equations in $\R^3$, \textit{Ann. I. H. Poincar\'e - AN}, \textbf{26}, 2009, pages 599-624.



\bibitem{CP}  J.-Y. Chemin and F. Planchon: Self-improving bounds for the Navier-Stokes equations. \textit{Bull. Soc. Math. France}, \textbf{140(4)}, (2013), 2012, pages 583-597.

\bibitem{ESS}L. Escauriaza, G. Seregin, and V. $\breve{\mathrm{S}}$ver$\acute{\mathrm{a}}$k:  $L_{3,\infty}$-solutions of Navier-Stokes equations and backward uniqueness. \textit{Uspekhi Mat. Nauk}, \textbf{58(2(350))},2003, pages 3-44.


\bibitem{GALL} I. Gallagher:  Profile decomposition for solutions of the Navier-Stokes equations, \textit{Bull. Soc. Math. France}, \textbf{129} (2), 2001, pages 285-316. 

\bibitem{GIP} I. Gallagher, D. Iftimie  and F. Planchon: Asymptotics and stability for global solutions to the Navier-Stokes equations, {\it Annales de l'Institut Fourier}, {\bf 53},  2003, pages 1387-1424.


\bibitem{GKP} I. Gallagher, G. Koch, F. Planchon:  A profile decomposition approach to the $L^{\infty}_{t}(L^3_{x})$ Navier-Stokes regularity criterion, to appear, \textit{Mathematische Annalen}, {\bf 355}, 2013, no. 4, pages 1527-1559

\bibitem{GKP2} I. Gallagher, G. Koch, F. Planchon:  Blow-up of critical Besov norms at a Navier-Stokes singularity, to appear, \textit{Communications in Mathematical Physics}, 2015. 

\bibitem{PG} P. G\'erard:  Description du d\'efaut de compacit\'e de l'injection de Sobolev, \textit{ESAIM Contr\^ole Optimal et Calcul des Variations}, \textbf{vol. 3}, Mai 1998, pages 213-233. 

\bibitem{PG2} P. G\'erard, Microlocal defect measures, \textit{Communications in Partial Differential Equations}, \textbf{16}, 1991, pages 1761-1794. 



\bibitem{JV} H. Jia and V. $\breve{\mathrm{S}}$ver$\acute{\mathrm{a}}$k: Local-in-space estimates near initial time for weak solutions of the Navier-Stokes equations and forward self-similar solutions,\textit{ Invent math}, \textbf{196}, 2014, pages 233-265.


\bibitem{KK} C. Kenig, G. Koch: An alternative approach to the Navier-Stokes equations in critical spaces, \textit{Ann. I. H. Poincar\'e - AN}, 2010.




\bibitem{GK} G. Koch:  Profile decompositions for critical Lebesgue and Besov space embeddings, Indiana University, {Mathematical Journal}, \textbf{59}, 2010, pages 1801-1830.



\bibitem{LR}  P.G. Lemari\'e-Rieusset:  {Recent Developments in the Navier-Stokes Problem}, Chapman \& Hall/CRC Res. Notes
Math., \textbf{vol. 431}, Chapman \& Hall/CRC, Boca Raton, FL, 2002, pages 148-151.


\bibitem{PLL} P.-L. Lions, The concentration-compactness principle in the calculus of variations. The limit case I, \textit{Revista. Matematica Iberoamericana} \textbf{1} (1), 1985, pages 145-201.

\bibitem{PLL2} P.-L. Lions, The concentration-compactness principle in the calculus of variations. The limit case II, \textit{Revista. Matematica Iberoamericana} \textbf{1} (2), 1985, pages 45-121.


\bibitem{YM} Y. Meyer: Wavelets, paraproducts, and Navier-Stokes equations, \textit{Current developments in mathematics}, 1996 \textit{(Cambridge, MA)}, Int. Press, Boston, MA, 1997, pages. 105-212.


\bibitem{P} E. Poulon, About the behaviour of regular Navier-Stokes solutions near the blow up, \textit{submitted hal-01010898v2}, 2014.
  

 

\end{thebibliography}
\end{document}